\theoremstyle{remark}
\theoremstyle{plain}
\newcounter{theoremintro} 
\newtheorem{introdefinition}[theoremintro]{Definition}
\newtheorem{introtheorem}[theoremintro]{Theorem}
\newtheorem{introcorollary}[theoremintro]{Corollary}
\newtheorem*{definition*}{Definition} 
\newtheorem*{theorem*}{Theorem} 
\newtheorem*{lemma*}{Lemma}
\newtheorem*{corollary*}{Corollary} 
\newtheorem{claim}{Claim}
\newtheorem{theorem}[subsection]{Theorem} 
\newtheorem{lemma}[subsection]{Lemma}
\newtheorem{corollary}[subsection]{Corollary}
\newtheorem{proposition}[subsection]{Proposition}
\theoremstyle{definition}
\newtheorem{definition}[subsection]{Definition}
\theoremstyle{remark}
\newtheorem{example}[subsection]{Example}
\newtheorem{remark}[subsection]{Remark}
\numberwithin{equation}{section}
\newcommand{\SL}{\mathrm{SL}}
\newcommand{\FIN}{\mathrm{FIN}}
\newcommand{\N}{\mathbb{N}}
\newcommand{\Z}{\mathbb{Z}} 
\newcommand{\Q}{\mathbb{Q}}
\newcommand{\C}{\mathbb{C}} 
\newcommand{\Aut}{\mathrm{Aut}}
\newcommand{\Ab}{\mathrm{Ab}}
\newcommand{\Mod}{\mathrm{Mod}\text{-}}
\newcommand{\Hom}{\mathrm{Hom}}
\DeclareMathOperator{\End}{End}
\DeclareMathOperator{\id}{id}
\DeclareMathOperator{\Ind}{Ind}
\DeclareMathOperator{\Idem}{Idem}
\newcommand{\Sw}[1]{\mathrm{Sw}^{fr}(#1;\Z)}
\begin{document} 
	\title[Bernoulli shifts and algebraic $K$-theory]{Bernoulli shifts on additive categories and algebraic $K$-theory of wreath products}
	
		\author[J. Kranz]{Julian Kranz}
	\address{J.K.: Chair of Data Science,		
		Institut für Wirtschaftsinformatik,		
		Leonardo Campus 3, 48149 M\"unster, Germany.}
	\email{julian.kranz@uni-muenster.de}
	\urladdr{https://sites.google.com/view/juliankranz/}

	\author[S. Nishikawa]{Shintaro Nishikawa} 
	\address{S.N.: School of Mathematical Sciences,		
		University of Southampton, 		
		University Road,		
		Southampton,		
		SO17 1BJ,		
		United Kingdom.} 
	\email{s.nishikawa@soton.ac.uk}
	\urladdr{https://sites.google.com/view/snishikawa/}
	
	\thanks{J.K. was partially supported by the Engineering and Physical Sciences Research Council [Grant Ref: EP/X026647/1]. 
		Both authors were partially funded by the Deutsche Forschungsgemeinschaft (DFG, German Research Foundation) - Project-ID 427320536 - SFB 1442, as well as by Germany's Excellence Strategy EXC
		2044 390685587, Mathematics Münster: Dynamics-Geometry-Structure.}

	\subjclass[2020]{Primary 19A31, 19B28, 19D50; Secondary 18F25, 18E05.}
	\date{\today}
	
	\keywords{Bernoulli shifts, algebraic $K$-theory, Farrell--Jones conjecture, group rings, wreath products}
	
	\maketitle
	
	\begin{abstract}
		We develop general methods to compute the algebraic $K$-theory of crossed products by Bernoulli shifts on additive categories. From this we obtain a $K$-theory formula for regular group rings associated to wreath products of finite groups by groups satisfying the Farrell--Jones conjecture. 
	\end{abstract}

	\tableofcontents
	
	\section{Introduction}
	
	The \emph{$K$-theoretic Farrell--Jones conjecture} \cite{FJoriginal} predicts that for any discrete group $G$ and any ring $R$, the \emph{assembly map} 
		\begin{equation}\label{eq:FJ}
			\mu\colon H^G_*(\mathcal E_{\mathrm{VCyc}}G,\mathbb K_R)\to K_*(R[G])
		\end{equation}
	is an isomorphism. The right-hand side denotes the algebraic $K$-theory groups of the group ring $R[G]$ and is of great interest for many problems in algebraic topology.	
	The conjecture implies several other famous conjectures such as the Bass conjectures and Kaplansky's idempotent conjecture. 
	It has been verified for a large class of groups including all hyperbolic groups \cite{Bartelshyperbolic}, CAT(0)-groups \cite{cat0}, and mapping class groups \cite{FJmappingclass}. 
	All these groups satisfy a more general \emph{$K$-theoretic Farrell--Jones conjecture with coefficients} \cite[Conjecture 3.2]{bartels2007coefficients}, which says that for any  additive category $\mathcal A$ with $G$-action, the assembly map
		\begin{equation}\label{eq:FJcoeff}
			\mu\colon H^G_*(\mathcal E_{\mathrm{VCyc}}G,\mathbb K_{\mathcal{A}})\to K_*(\mathcal A\rtimes G)
		\end{equation}
is an isomorphism. The right-hand side denotes the algebraic $K$-theory groups $K_*(\mathcal A\rtimes G)$ (see Section \ref{sec:Rlinear} for the definition of $\mathcal A\rtimes G$) where $K_i(-)=\pi_i(\bold{K}^{\infty}(-))$ $(i\in\Z)$ for the non-connective algebraic $K$-theory functor $\bold{K}^\infty$ for additive categories \cite[Definition 4.1]{Nonconnective}.
	This paper is devoted to computing the algebraic $K$-theory of certain (twisted) group rings, \emph{using} the Farrell--Jones conjecture as an assumption. 
	We refer to \cite{Lueck2024, Reich2018} for surveys on the Farrell--Jones conjecture and to \cite{BCH,BCsurvey} for the closely related \emph{Baum--Connes conjecture} about topological $K$-theory of group $C^*$-algebras.

	The left-hand side of \eqref{eq:FJ} is given by equivariant homology groups of the classifying space $\mathcal E_{\mathrm{Vcyc}}G$ for the family of virtually cyclic subgroups with respect to an equivariant homology theory satisfying $H^G_*(G/H,\mathbb K_R)\cong K_*(R[H])$ for all subgroups $H\subseteq G$. 
	These homology groups are still much more difficult to compute than just computing $K_*(R[H])$ for all virtually cyclic subgroups $H\subseteq G$ since they also depend on the equivariant CW-structure of $\mathcal E_{\mathrm{Vcyc}}G$.	
	The functoriality properties of the assembly map however imply a very useful principle	
	which is analogous to the Going-Down-principle \cite{CEO} in the context of the Baum--Connes conjecture	\footnote{Although the Going--Down-principle is a triviality in the Davis--L\"uck framework \cite{davislueck}, it is a non-trivial theorem in the original framework by Baum--Connes--Higson \cite{CEO}. In fact, it is the key ingredient in identifying these two frameworks.}:
	\begin{lemma*}[Going-Down-principle]
	Assume that $G$ satisfies the Farrell--Jones conjecture with coefficients. 
	If $F\colon \mathcal A\to \mathcal B$ is an equivariant additive functor between additive categories with $G$-action such that 
	\begin{equation}\label{eq:localequivalence}
		K_*(F\rtimes H)\colon K_*(\mathcal A\rtimes H)\xrightarrow{\cong} K_*(\mathcal B\rtimes H)
	\end{equation}
	is an isomorphism for every virtually cyclic subgroup $H\subseteq G$, then
	\[K_*(F\rtimes G)\colon K_*(\mathcal A\rtimes G)\xrightarrow{\cong} K_*(\mathcal B\rtimes G)\]
	is an isomorphism. 
	\end{lemma*}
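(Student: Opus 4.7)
The plan is to exploit naturality of the Farrell--Jones assembly map together with the defining property of the equivariant homology theory $H^G_*(-,\mathbb K_{\mathcal A})$ on orbits.

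First, I would observe that the equivariant additive functor $F$ induces a natural transformation $\mathbb K_{\mathcal A}\to\mathbb K_{\mathcal B}$ of $G$-equivariant homology theories, and hence a commutative square whose top and bottom rows are the assembly maps $\mu_{\mathcal A}$ and $\mu_{\mathcal B}$ of~\eqref{eq:FJcoeff}. Since $G$ satisfies the Farrell--Jones conjecture with coefficients by hypothesis, both horizontal arrows are isomorphisms, so the problem reduces to showing that the induced map
\[
H^G_*(\mathcal E_{\mathrm{VCyc}}G,\mathbb K_{\mathcal A})\longrightarrow H^G_*(\mathcal E_{\mathrm{VCyc}}G,\mathbb K_{\mathcal B})
\]
is an isomorphism.

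Next, I would invoke the characterising identification $H^G_*(G/H,\mathbb K_{\mathcal A})\cong K_*(\mathcal A\rtimes H)$ (and the analogue for $\mathcal B$), under which the vertical map above, evaluated on the orbit $G/H$, becomes precisely $K_*(F\rtimes H)$. The hypothesis \eqref{eq:localequivalence} thus says that this map is an isomorphism on every orbit $G/H$ with $H$ virtually cyclic, which are exactly the isotropy orbits appearing in $\mathcal E_{\mathrm{VCyc}}G$.

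Finally, a standard induction along the equivariant skeletal filtration of $\mathcal E_{\mathrm{VCyc}}G$---whose cells are of the form $G/H\times D^n$ with $H\in\mathrm{VCyc}$---applying Mayer--Vietoris and the five lemma at each cell attachment and passing to filtered colimits, will upgrade this orbitwise isomorphism to an isomorphism on the whole classifying space. The only step that requires genuine care is the naturality of the assembly map with respect to equivariant additive functors; in the Davis--L\"uck framework this is built in by construction, which is precisely why the footnote flags the lemma as a triviality there and contrasts it with the Baum--Connes setting of~\cite{CEO}, where the analogous Going-Down principle is substantive.
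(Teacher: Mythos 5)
Your proof is correct and is essentially the argument the paper has in mind: the paper only asserts the lemma via ``functoriality properties of the assembly map,'' and your fleshing-out (naturality of the assembly map in the coefficient category, the identification $H^G_*(G/H,\mathbb K_{\mathcal A})\cong K_*(\mathcal A\rtimes H)$, and cellular induction over $\mathcal E_{\mathrm{VCyc}}G$ with colimits) is exactly the standard Davis--L\"uck-style justification. No gaps to report.
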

	When assuming that the categories $\mathcal A$ and $\mathcal B$ are sufficiently regular, the above situation even reduces to all finite subgroups $H\subseteq G$ instead of all virtually cyclic subgroups (see Section \ref{sec-wreath}).
	
	We use this principle to compute the algebraic $K$-theory of categories of the form $\mathcal A^{\otimes Z}\rtimes G$ where $\mathcal A$ is an additive category and $\mathcal A^{\otimes Z}$ is the infinite tensor product taken over a $G$-set $Z$. We moreover treat the more general situation where $\mathcal A$ is linear over a commutative base ring $R$ and the tensor products are taken relative to $R$ (see Section \ref{sec:Rlinear} for the definitions). 
	Our main strategy is to replace $\mathcal A$ by a more tractable category $\mathcal B$ which is $K$-theoretically equivalent to $\mathcal A$ in the following sense: 
	\begin{introdefinition}[Definition \ref{defn:virtualeq}]
		A \emph{virtual equivalence} between $R$-linear categories is an $R$-linear functor $F\colon \mathcal A\to \mathcal B$ such that there exist $R$-linear functors $Q_\pm\colon \mathcal B\to \mathcal A$ and natural equivalences 
			\[FQ_+\simeq \id_\mathcal B\oplus FQ_-,\quad Q_+F\simeq \id_\mathcal A \oplus Q_-F.\]
	\end{introdefinition}
	Virtual equivalences induce isomorphisms at the level of $K$-theory: in the notation above, the inverse of $K_*(F)$ is given by $K_*(Q_+)-K_*(Q_-)$. Note that our definition can be viewed as an additive analogue of \emph{universal $K$-equivalences} from \cite[Definition 4.4]{Barwick2022}\footnote{A subtle difference is that we don't allow for stabilization by additional functors $S,T$ such that $FQ_+\oplus S\simeq \id_\mathcal B\oplus FQ_-\oplus S$ and $ Q_+F\oplus T\simeq \id_\mathcal A \oplus Q_-F\oplus T$ as in \cite[Definition 4.4]{Barwick2022}. We decided on this simplification in favor of shorter proofs rather than conceptual reasons.}.
	
	Our definition allows us to prove an analogue of Izumi's theorem \cite[Theorem 2.1]{Izumi} which says that the Bernoulli shifts of a finite group $G$ on two $KK$-equivalent $C^*$-algebras are $KK^G$-equivalent in the sense of \cite{Kasparov}. Our substitute for the $C^*$-algebraic $KK$-equivalences are precisely the virtual equivalences:
	\begin{introtheorem}[Theorem \ref{lem:virtualEqFiniteGroup}]\label{introthm:Izumi}
		Let $F\colon \mathcal A\to \mathcal B$ be an $R$-linear virtual equivalence, let $G$ be a finite group and let $Z$ be a finite $G$-set. Then 
			\[F^{\otimes Z}\rtimes G\colon \mathcal A^{\otimes Z}\rtimes G\to \mathcal B^{\otimes Z}\rtimes G\]
		is an $R$-linear virtual equivalence as well.
	\end{introtheorem}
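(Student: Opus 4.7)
The plan is to exploit the Grothendieck-group shadow of a virtual equivalence. At the level of isomorphism classes in the tensor semiring, the relation $FQ_+\simeq\id_{\mathcal B}\oplus FQ_-$ gives $[F]([Q_+]-[Q_-])=[\id_{\mathcal B}]$, so $[F^{\otimes Z}]$ should be inverted by the binomial expansion $([Q_+]-[Q_-])^{\otimes Z}=\sum_{S\subseteq Z}(-1)^{|Z\setminus S|}[H_S]$, where $H_S\colon\mathcal B^{\otimes Z}\to\mathcal A^{\otimes Z}$ denotes the $R$-linear functor obtained by tensoring $Q_+$ at the positions $z\in S$ with $Q_-$ at the positions $z\notin S$. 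My aim is to realize this identity by genuine natural equivalences and then to check that every piece is $G$-equivariant so that the conclusion descends to the crossed products.

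Concretely, I set
\[
\tilde Q_+ := \bigoplus_{\substack{S\subseteq Z\\|Z\setminus S|\text{ even}}} H_S,\qquad \tilde Q_- := \bigoplus_{\substack{S\subseteq Z\\|Z\setminus S|\text{ odd}}} H_S,
\]
and verify the two natural equivalences demanded by the definition of a virtual equivalence. For $F^{\otimes Z}\tilde Q_+\simeq\id_{\mathcal B^{\otimes Z}}\oplus F^{\otimes Z}\tilde Q_-$, I apply the hypothesis $FQ_+\simeq\id_{\mathcal B}\oplus FQ_-$ in each tensor factor of $F^{\otimes Z}H_S=(FQ_+)^{\otimes S}\otimes(FQ_-)^{\otimes Z\setminus S}$ and reindex the resulting double sum by the set $W\subseteq Z$ of tensor slots that end up carrying the $\id$ factor, with $FQ_-$ at the remaining slots. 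A short parity count shows that for $W=Z$ the summand $\id_{\mathcal B^{\otimes Z}}$ appears exactly once on the even-parity side and not at all on the odd side, while for every proper $W\subsetneq Z$ the corresponding summand appears with the same multiplicity $2^{|Z\setminus W|-1}$ on both sides. Thus the two sides differ precisely by $\id_{\mathcal B^{\otimes Z}}$. The dual equivalence $\tilde Q_+F^{\otimes Z}\simeq\id_{\mathcal A^{\otimes Z}}\oplus\tilde Q_-F^{\otimes Z}$ follows by the symmetric argument using $Q_+F\simeq\id_{\mathcal A}\oplus Q_-F$.

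It then remains to upgrade the construction to $G$-equivariant data. The action of $G$ on $\mathcal A^{\otimes Z}$ and $\mathcal B^{\otimes Z}$ permutes tensor factors, and conjugation by $g\in G$ carries $H_S$ to $H_{gS}$; since $|gS|=|S|$, the parity partition of the subsets of $Z$ is $G$-stable, so $\tilde Q_\pm$ are $G$-equivariant. The natural equivalences assembled above are $G$-equivariant as well, since $G$ merely permutes summands within each parity class, and hence they descend to natural equivalences after applying $(-)\rtimes G$, exhibiting $F^{\otimes Z}\rtimes G$ as an $R$-linear virtual equivalence with witnesses $\tilde Q_\pm\rtimes G$. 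The main technical obstacle I anticipate is making the double-sum reindexing canonical enough to yield honest natural equivalences that are manifestly compatible with the $G$-permutation of tensor factors; this will require careful use of the distributivity of $\otimes_R$ over direct sums in $R$-linear additive categories together with the symmetric-monoidal coherence for the $G$-action.
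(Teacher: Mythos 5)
Your construction is exactly the ``naive binomial expansion'' that the paper explicitly sets up and then rejects, and the gap sits precisely at the step you wave through: the claim that the natural equivalences are $G$-equivariant ``since $G$ merely permutes summands within each parity class.'' The parity classes are indeed $G$-stable, but an equivariant natural equivalence between $F^{\otimes Z}\tilde Q_+$ and $\id\oplus F^{\otimes Z}\tilde Q_-$ requires matching the individual summands $\id^{\otimes W}\otimes(FQ_-)^{\otimes Z\setminus W}$ on the two sides by a $G$-equivariant bijection of the index sets (compatibly over each $W$), and such a bijection need not exist. Concretely, for fixed $W$ the even-side fiber is $\{T\subseteq Z\setminus W: |T| \text{ even}\}$ and the odd-side fiber is $\{T\subseteq Z\setminus W: |T| \text{ odd}\}$, both acted on by the stabilizer $G_W$. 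Take the paper's own counterexample: $G=C_2$, $Z=G$, $F=\id$, $Q_+=\id\oplus\id$, $Q_-=\id$, and $W=\emptyset$. The even fiber $\{\emptyset, Z\}$ consists of two $G$-fixed points while the odd fiber $\{\{1\},\{g\}\}$ is a single free orbit, so no $C_2$-equivariant bijection exists; correspondingly, the two sides are permutation modules with different characters ($3$ fixed summands plus one free orbit versus $1$ fixed summand plus two free orbits), and no equivariant natural equivalence can exist with your $\tilde Q_\pm$. Your multiplicity count $2^{|Z\setminus W|-1}$ is correct and does prove the non-equivariant statement that $F^{\otimes Z}$ is a virtual equivalence, but that is not what is needed: the theorem requires an \emph{equivariant} quasi-inverse so that $\rtimes G$ can be applied.

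This is why the paper does not use $(Q_+-Q_-)^{\otimes Z}$ but instead the orbit-indexed functors $Q_{H,T}=\bigoplus_{gH\in G/H}Q_+^{\otimes Z\setminus gT}\otimes Q_-^{\otimes gT}$, together with the auxiliary functors $R_{H,T}$, Lemma \ref{lem:distributive} (an equivariant distributivity statement, which is also the reason for the nontrivial $G$-actions on the index sets in $(\mathcal A^{\otimes Z})^f$), and the inductive cancellation of Lemma \ref{lem:remainder}: one repeatedly trades the error terms $R^{(k)}$ for further functors of the form $Q_{H,S}F^{\otimes Z}$ with $|S|$ strictly increasing, terminating at $|S|=|Z|$, and only then reads off $P_\pm$ as the even/odd aggregates of the $Q_{H_i,S_i}$ produced along the way. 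So to repair your argument you would have to abandon the subsetwise indexing in favor of $G$-orbit-wise indexing and replace the one-shot parity matching by such an inductive bookkeeping; as written, the final equivariance claim is false and the proof does not go through.
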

	Under sufficient regularity assumptions on the underlying categories, the Going-Down principle allows us to upgrade the above theorem from finite groups to groups satisfying the Farrell--Jones conjecture. We apply this strategy to group rings associated to wreath products by realizing them as crossed product rings $R[H\wr G]\cong R[H]^{\otimes G}\rtimes G$ associated to the natural shift action:
	\begin{introtheorem}[Theorem \ref{thm:wreathproducts}]
		Let $G$ be a group satisfying the $K$-theoretic Farrell--Jones conjecture with coefficients and let $H$ be a finite group. Let $R$ be a regular commutative ring such that the orders of $H$ and of every finite subgroup of $G$ are invertible in $R$. 
		Denote by $\mathrm{FIN}(G)$ the set of finite subsets of $G$ and by $I_R[H]$ the augmentation ideal of $R[H]$, which is an $R$-algebra with unit by assumption. Then we have an isomorphism
		\[K_*(R[H\wr G])\cong \bigoplus_{[F]\in G\backslash\mathrm{FIN}(G)}K_*(I_R[H]^{\otimes F}\rtimes G_F).\]
		Here $G$ acts on $\mathrm{FIN}(G)$ by left translation and $G_F$ denotes the stabilizer of $F\in \mathrm{FIN}(G)$. 
	\end{introtheorem}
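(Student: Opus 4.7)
My plan is to recast $R[H\wr G]$ as a crossed product on an infinite tensor product, exploit the idempotent splitting of $R[H]$, and then apply Theorem~\ref{introthm:Izumi} together with the Going-Down principle. First, the standard identification $R[H\wr G]\cong R[H]^{\otimes G}\rtimes G$, with $G$ acting on $R[H]^{\otimes G}$ by the Bernoulli shift, recasts the problem as computing $K_*(\mathcal{A}^{\otimes G}\rtimes G)$ for the additive category $\mathcal{A}=\fgpMod(R[H])$. Since $|H|$ is invertible in $R$, the averaging idempotent $e_H=\frac{1}{|H|}\sum_{h\in H}h$ gives an $R$-algebra splitting $R[H]\cong R\oplus I_R[H]$ (with units $e_H$ and $1-e_H$), hence an additive decomposition $\mathcal{A}\simeq\mathcal{A}_0\oplus\mathcal{A}_1$ where $\mathcal{A}_0=\fgpMod(R)$ is the $R$-linear tensor unit and $\mathcal{A}_1=\fgpMod(I_R[H])$.

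The key step is to establish the $G$-equivariant ``Fock-space'' decomposition
\[\mathcal{A}^{\otimes G}\simeq\bigoplus_{F\in\FIN(G)}\mathcal{A}_1^{\otimes F},\]
where $G$ acts on $\FIN(G)$ by left translation and on each summand via the induced identifications $\mathcal{A}_1^{\otimes F}\to\mathcal{A}_1^{\otimes gF}$. For finite $G$ this is a direct binomial expansion of the finite tensor product of $\mathcal{A}_0\oplus\mathcal{A}_1$. For infinite $G$, I would upgrade this to a $K$-theoretic statement by invoking Theorem~\ref{introthm:Izumi} combined with the Going-Down principle. Restricted to any finite subgroup $K\leq G$, the $K$-set $G$ decomposes into finite orbits $K/K_x$, so the pertinent identifications reduce to finite tensor products to which Theorem~\ref{introthm:Izumi} applies. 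The regularity of $R$, together with the invertibility of $|H|$ and of the orders of finite subgroups of $G$, ensures that the intermediate categories are sufficiently regular to apply the finite-subgroup version of Going-Down (see Section~\ref{sec-wreath}), which propagates the isomorphism from every finite subgroup to $G$ itself.

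Finally, since the $G$-action on the right-hand side permutes the summands through the action on $\FIN(G)$, the standard induced-module/Morita decomposition for such permutation-like actions on direct sums of categories gives
\[\Big(\bigoplus_{F\in\FIN(G)}\mathcal{A}_1^{\otimes F}\Big)\rtimes G\simeq\bigoplus_{[F]\in G\backslash\FIN(G)}\mathcal{A}_1^{\otimes F}\rtimes G_F.\]
Each $G_F$ is automatically finite, since the free left translation of $G$ on itself forces $G_F\hookrightarrow\mathrm{Sym}(F)$. Passing to $K$-theory and using $K_*(\mathcal{A}_1^{\otimes F}\rtimes G_F)=K_*(I_R[H]^{\otimes F}\rtimes G_F)$ then yields the claimed formula. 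The main obstacle I anticipate is the Fock-space step: the infinite tensor product does not split as a categorical direct sum on the nose, because the transition maps in its colimit presentation mix the summands $\mathcal{A}_1^{\otimes F}$ non-trivially. Extracting the decomposition at the level of $K$-theory is precisely where the Farrell--Jones hypothesis enters, via Theorem~\ref{introthm:Izumi} and Going-Down.
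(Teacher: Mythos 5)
Your overall strategy --- realizing $R[H\wr G]$ as $\mathcal F^p(R[H])^{\otimes G}\rtimes G$, splitting $R[H]\cong R\oplus I_R[H]$, decomposing over $\FIN(G)$, and feeding this into Izumi's theorem plus Going-Down --- is the same as the paper's, but there is a genuine gap at the step you yourself flag as the crux. Theorem \ref{introthm:Izumi} must be applied to a concrete virtual equivalence, and for infinite $Z$ it must be a \emph{based} functor so that its infinite tensor power is even defined as a map of colimits; you never produce such a functor. The honest equivalence $\mathcal F^p(R[H])\simeq\mathcal F^p(R)\oplus\mathcal F^p(I_R[H])$ coming from the idempotent splitting sends the base point $R[H]$ to the pair $(R,I_R[H])$, not to $R\in\mathcal F^p(R)$; with that base point the infinite tensor product does \emph{not} split over $\FIN(G)$ (the transition functors mix the summands, as you note), while with the base point $R\in\mathcal F^p(R)$ the splitting is automatic but the idempotent-splitting equivalence no longer induces any functor between the two based infinite tensor products. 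The paper resolves exactly this tension in Example \ref{eg:finite group ring}: the based functor $F=\Ind_\iota\oplus\Ind_j\colon \mathcal F^p(R)\oplus\mathcal F^p(I_R[H])\to\mathcal F^p(R[H])$, which is \emph{not} an equivalence but is a virtual equivalence with quasi-inverse $(\Ind_\varepsilon\oplus\Ind_\pi,\,0\oplus\Ind_{\pi\iota\varepsilon})$. It is $F^{\otimes G}\rtimes G$ to which Izumi's theorem and the Going-Down machinery are applied (Corollary \ref{cor:grouprings}); on the source, where the base point lies entirely in $\mathcal F^p(R)$, the decomposition $(\mathcal F^p(R)\oplus\mathcal F^p(I_R[H]))^{\otimes G}\simeq\bigoplus_{F\in\FIN(G)}\mathcal F^p(I_R[H])^{\otimes F}$ is an honest equivariant equivalence, and Lemma \ref{lem:Green} finishes as you say. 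Without identifying this comparison functor, ``invoke Izumi and Going-Down'' has nothing to act on, so your Fock-space step is not yet a proof; restricting to finite subgroups does not help by itself, since over a finite subgroup the tensor product is still infinite and the base-point problem persists.

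Two smaller points. First, the reduction is not literally to finite subgroups: Theorem \ref{thmFJimpliesGoingDown} reduces to finite-by-cyclic subgroups, and the cyclic part is stripped off in Lemma \ref{lem:virtually cyclic}, which requires verifying regularity of $\mathcal F^p(R[H])^{\otimes n}$ and $(\mathcal F^p(R)\oplus\mathcal F^p(I_R[H]))^{\otimes n}$ and flatness of the relevant inclusions (this is where regularity of $R$ and invertibility of the group orders actually enter, via Lemma \ref{lem:regular}); you gesture at ``sufficient regularity'' but these checks are part of the argument, as carried out in Corollary \ref{cor:grouprings}. Second, your claim that every $G_F$ is finite fails for $F=\emptyset$, where $G_\emptyset=G$; that summand contributes $K_*(R[G])$ and causes no harm, but the assertion as stated is false.
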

	The above result is an algebraic $K$-theory analogue of the results in \cite{XinLi}, see also \cite{CEKN}. 
	The commutativity assumption on $R$ can be dropped (see Remark \ref{rem:wreathproducts}).
	The formula becomes much more concrete for algebraically closed fields of characteristic zero:
	
	\begin{introcorollary}[Corollary \ref{cor:complexgroupring}]
	Let $G$ be a group satisfying the $K$-theoretic Farrell--Jones conjecture with coefficients, let $H$ be a finite group, and let $R$ be an algebraically closed field of characteristic zero.
	Then we have an isomorphism 
	\begin{align*}
		K_*(R[H\wr G])
		&\cong \bigoplus_{[F]\in G\backslash \FIN(G)} \bigoplus_{[S] \in G_F\backslash (\{1, \ldots, n\}^F) } K_*(R[G_S])\\
		&\cong  K_*(R[G]) \oplus \bigoplus_{[C]\in \mathcal C}\,\, \bigoplus_{[X] \in N_C \backslash F(C) }\,\,  \bigoplus_{[S] \in C \backslash \{1, \ldots, n\}^{C\cdot X} } K_\ast(R[C_S]).			
	\end{align*}
	\end{introcorollary}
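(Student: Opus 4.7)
The plan is to start from Theorem~\ref{thm:wreathproducts} and simplify each summand $K_*(I_R[H]^{\otimes F}\rtimes G_F)$ using the explicit Wedderburn structure of $R[H]$. Since $H$ is finite and $R$ is algebraically closed of characteristic zero, Maschke together with Wedderburn--Artin yields
\[R[H]\;\cong\;R\;\times\;\prod_{i=1}^{n} M_{d_i}(R),\]
where the $R$-factor corresponds to the trivial character (equivalently, to the augmentation) and the other $n$ factors to the non-trivial irreducible representations of $H$. Hence $I_R[H]\cong \prod_{i=1}^n M_{d_i}(R)$, and distributing the $F$-fold relative tensor product over this finite product gives
\[I_R[H]^{\otimes F}\;\cong\;\bigoplus_{S\in\{1,\ldots,n\}^F} A_S, \qquad A_S:=\bigotimes_{f\in F} M_{d_{S(f)}}(R)\cong M_{N_S}(R),\]
with $N_S=\prod_{f\in F}d_{S(f)}$.

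Next I would exploit $G_F$-equivariance of this decomposition (the stabilizer $G_F$ acts on $F$, hence on $\{1,\ldots,n\}^F$, and permutes the factors $A_S$ accordingly). The standard imprimitivity-type reduction for crossed products of a product algebra indexed by a $G$-set gives
\[I_R[H]^{\otimes F}\rtimes G_F\;\simeq_{\mathrm{Morita}}\;\bigoplus_{[S]\in G_F\backslash\{1,\ldots,n\}^F} A_S\rtimes G_S,\]
where $G_S\subseteq G_F$ is the stabilizer of $S$; the Morita equivalence on each orbit is implemented by multiplying with the idempotent cutting out the chosen $S$-factor within its orbit.

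The central step is to identify each $A_S\rtimes G_S$ up to Morita equivalence with $R[G_S]$. Set $V_S:=\bigotimes_{f\in F}R^{d_{S(f)}}$, so $A_S=\End_R(V_S)$. Because $g\in G_S$ fixes $S$, the permutation action of $G_S$ on the tensor factors of $A_S$ always matches factors of equal dimension and thus defines a genuine linear automorphism $\rho(g)\in GL(V_S)$. The assignment $\rho\colon G_S\to GL(V_S)$ is an honest group homomorphism (not merely projective) and implements the action on $A_S$, so the crossed product untwists:
\[A_S\rtimes G_S\;\xrightarrow{\cong}\;A_S\otimes_R R[G_S],\qquad a\,u_g\longmapsto a\rho(g)\otimes g,\]
which is Morita equivalent to $R[G_S]$. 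Combining everything with Morita invariance of algebraic $K$-theory yields the first displayed isomorphism of the corollary.

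The second isomorphism is bookkeeping. The empty $F$ contributes $K_*(R[G])$, since $\{1,\ldots,n\}^{\emptyset}$ is a singleton with stabilizer $G_{\emptyset}=G$. For nonempty $F$, the freeness of the left-translation action of $G$ on itself forces $G_F$ to be a finite subgroup acting freely on $F$, so $F=G_F\cdot X$ for any transversal $X$. Parametrizing $G$-orbits of nonempty $F$ by the conjugacy class $[C]\in\mathcal C$ of the stabilizer together with the $N_C$-orbit of a transversal $X\in F(C)$ rewrites the double sum in the claimed form. The step that I expect to be most delicate is the untwisting above: a generic inner action on a matrix algebra only lifts to a projective representation and would produce a twisted group algebra $R^{c}[G_S]$; one has to exploit the specific permutation structure of the action to conclude that the cocycle is trivial.
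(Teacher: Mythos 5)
Your proof is correct, but its central step takes a genuinely different route from the paper's. The paper uses the Morita equivalence $I_R[H]\simeq R^{\oplus n}$ at the level of module categories: the induced equivalence $\mathcal F^p(I_R[H])\simeq \mathcal F^p(R^{\oplus n})$, tensored over $F$, is automatically $G_F$-equivariant, so after crossing with $G_F$ and idempotent completion (Lemma \ref{lem:idempotentcomplete}) one can apply Lemma \ref{lem:Green} directly to $R^{\oplus \{1,\dotsc,n\}^F}\rtimes G_F$, where each orbit summand is literally $R[G_S]$ and no twisting question ever arises. You instead keep the Wedderburn blocks, distribute to get $I_R[H]^{\otimes F}\cong \bigoplus_S A_S$ with $A_S\cong M_{N_S}(R)$, apply the imprimitivity reduction (the algebra-level counterpart of Lemma \ref{lem:Green}, valid here since $\{1,\dotsc,n\}^F$ is finite), and then must untwist the $G_S$-action on each matrix block; your resolution of the cocycle issue is correct: since $g\in G_S$ satisfies $S(gf)=S(f)$, the permutation of the tensor factors of $V_S=\bigotimes_{f\in F}R^{d_{S(f)}}$ is a genuine linear representation $\rho\colon G_S\to \GL(V_S)$ implementing the action by conjugation, so $A_S\rtimes G_S\cong A_S\otimes R[G_S]\cong M_{N_S}(R[G_S])$ and Morita invariance of $K$-theory finishes the first isomorphism. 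What the paper's route buys is precisely the avoidance of this untwisting step, the only delicate point in your argument; what your route buys is an explicit, self-contained algebra-level computation of each summand $K_*(I_R[H]^{\otimes F}\rtimes G_F)$. For the second displayed isomorphism both you and the paper defer to the same combinatorial reindexing (the paper cites Li's Proposition 2.4); your sketch is consistent with it, the role of the condition defining $F(C)$ being to guarantee that the stabilizer of the subset $C\cdot X\subseteq G$ is exactly $C$ and not a larger finite subgroup, which prevents double counting of orbits.
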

	Here, $n$ denotes the number of non-trivial conjugacy classes of $H$, $\mathcal C$ denotes the set of all conjugacy classes of finite subgroups of $G$, $F(C)$  the nonempty finite subsets of $C\backslash G$ which are not of the form $\pi^{-1}(Y)$ for a finite subgroup $D\subseteq G$ with $C\subsetneq D$ and $Y\subseteq D\backslash G$ where $\pi\colon C\backslash G\to D\backslash G$ denotes the projection, $N_C=\{g\in G: gCg^{-1}=C\}$ the normalizer of $C$ in $G$, and $C_S=G_S\cap C$ the stabilizer of $S$ in $C$.
	
	As another application of the Going-Down principle, we compute the algebraic $K$-theory of Bernoulli shifts on many semi-simple algebras. We do this by carefully analysing the $A(G)$-module structure of $K_*(\mathcal A\rtimes G)$ where $A(G)$ denotes the Burnside ring. These results are inspired by the $C^*$-algebraic results in \cite{KN,CEKN}.
		\begin{introtheorem}[Theorem \ref{thm-semisimple}]
		Let $G$ be a group satisfying the Farrell--Jones conjecture with coefficients. Assume that $R$ is a regular commutative ring, and that the orders of all finite subgroups of $G$ are invertible in $R$. Let $Z$ be an infinite $G$-set and let $A$ be an $R$-algebra of the form $A=M_{n_0}(R)\oplus \dotsb\oplus M_{n_k}(R)$\footnote{Note that any finite-dimensional semi-simple algebra over an algebraically closed field is of this form.}. Write $n=\mathrm{gcd}(n_0,\dotsc,n_k)$ and denote by $\mathrm{FIN}(Z)$ the set of finite subsets of $Z$ equipped with the left translation action of $G$. Then we have an isomorphism 
		\[K_*(A^{\otimes Z}\rtimes G)\cong \bigoplus_{[F]\in G\backslash \FIN(Z)}\bigoplus_{[S]\in G_F\backslash \{1,\dotsc,k\}^F}K_*(R[G_S])[1/n].\]
	\end{introtheorem}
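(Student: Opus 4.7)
The plan is to follow the paper's general scheme: reduce via the Going-Down principle to the finite subgroup case, apply Theorem~\ref{introthm:Izumi} after replacing $A$ by a virtually equivalent coordinate ring, read off the formula from an orbit decomposition, and account for the $[1/n]$-factor via a Burnside ring analysis. Since $R$ is regular and the orders of all finite subgroups of $G$ are invertible in $R$, the additive categories built from $A^{\otimes Z}$ lie in the sufficiently regular regime discussed in Section~\ref{sec-wreath}, so the Going-Down principle reduces the problem to verifying the isomorphism after restricting $G$ to an arbitrary finite subgroup $H\leq G$.

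For finite $H$, the standard Morita equivalences $M_{n_i}(R)\sim R$ (one for each summand) assemble into an $R$-linear equivalence, hence a virtual equivalence in the sense of Definition~\ref{defn:virtualeq}, between the additive category $\mathcal A$ of finitely generated projective $A$-modules and the category $\mathcal B$ of finitely generated projective $R^{k+1}$-modules, sending the $i$-th simple of $A$ to the $i$-th coordinate idempotent. Taking the $0$-th coordinate as basepoint in $\mathcal B$ and applying Theorem~\ref{introthm:Izumi} to each $H$-invariant finite subset of $Z$ (each of which has finite $H$-orbits), we obtain compatible virtual equivalences which pass to the filtered colimit to yield
\[\mathcal A^{\otimes Z}\rtimes H \simeq \mathcal B^{\otimes Z}\rtimes H\]
at the level of algebraic $K$-theory. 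The indecomposable objects of $\mathcal B^{\otimes Z}$ are indexed by finitely supported functions $s\colon Z\to\{0,1,\dotsc,k\}$, equivalently by pairs $(F,t)$ with $F\in\FIN(Z)$ and $t\in\{1,\dotsc,k\}^F$; the $H$-action permutes these pairs with stabilizer $H_S$ for $S=(F,t)$. The additive-category decomposition used in the proof of Theorem~\ref{thm:wreathproducts} then yields
\[K_*(\mathcal B^{\otimes Z}\rtimes H) \cong \bigoplus_{[F]\in H\backslash\FIN(Z)}\bigoplus_{[S]\in H_F\backslash\{1,\dotsc,k\}^F} K_*(R[H_S]).\]

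The two displays combined reproduce the claimed formula, but \emph{without} the localization $[1/n]$. This last factor arises because the Morita identifications $M_{n_i}(R)\sim R$ are compatible with the natural $A(H)$-module structure on algebraic $K$-theory only up to multiplication by the dimensions $n_i$: tracking how the induction and restriction idempotents of the Burnside ring act on $K_*(\mathcal A^{\otimes Z}\rtimes H)$ shows that the identification with $K_*(\mathcal B^{\otimes Z}\rtimes H)$ requires inverting the $n_i$ in order to realize the relevant Bezout relations. Since $n=\mathrm{gcd}(n_i)$, inverting $n$ alone suffices, producing the factor $[1/n]$; this is the algebraic counterpart of the $C^*$-algebraic Burnside analysis of~\cite{KN,CEKN}. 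The principal difficulty is precisely this last step: pinpointing the exact way the $n_i$ enter the $A(H)$-module structure and verifying that $[1/n]$ (rather than any finer localization) is correct. Adapting the detailed $C^*$-algebraic calculations of~\cite{KN,CEKN} to the additive-category setting of algebraic $K$-theory is the technical heart of the argument.
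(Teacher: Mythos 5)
There is a genuine gap, and it sits exactly where your proposal leans hardest: the claim that the coordinatewise Morita equivalences $M_{n_i}(R)\sim R$ give a virtual equivalence that ``passes to the filtered colimit'' to identify $K_*(\mathcal A^{\otimes Z}\rtimes H)$ with $K_*(\mathcal B^{\otimes Z}\rtimes H)$ \emph{without} the factor $[1/n]$. The infinite tensor product is a colimit along connecting functors $-\otimes \mathds{1}$, so only \emph{based} functors induce functors on $\mathcal A^{\otimes Z}$, and the Morita functor is not based: it sends the unit $A=M_{n_0}\oplus\dotsb\oplus M_{n_k}$ (the basepoint of $\mathcal F^p(A)$) to $R^{n_0}\oplus\dotsb\oplus R^{n_k}$, not to your chosen $0$-th coordinate idempotent. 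Consequently Theorem \ref{introthm:Izumi} plus a colimit argument does not apply, and indeed your intermediate isomorphism is false already for trivial $G$: with $k=0$, $A=M_2(R)$ and $Z$ infinite, $K_0(A^{\otimes Z})\cong K_0(R)[1/2]$ while $K_0(\mathcal B^{\otimes Z})\cong K_0(R)$. So the localization $[1/n]$ is not a correction of an $A(H)$-module identification to be supplied at the end; it is produced by the colimit structure itself, and the step you defer as ``the technical heart'' is precisely what is missing.

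The paper resolves this with two separate mechanisms. First, instead of Morita-trivializing $A$ all the way to $R^{\oplus k+1}$, it constructs a \emph{based} virtual equivalence $\Idem(F_X)\colon \mathcal F^p(M_n)^{\oplus k+1}\to \mathcal F^p(M_{n_0})\oplus\dotsb\oplus\mathcal F^p(M_{n_k})$ from a matrix $X\in\SL(k+1,\Z)$ with non-negative entries satisfying $X\cdot(n,\dotsc,n)^{T}=(n_0,\dotsc,n_k)^{T}$; such $X$ exists by \cite[Corollary 3.4]{CEKN}, and this is exactly where $n=\gcd(n_0,\dotsc,n_k)$ enters. Being based, this functor is compatible with the infinite tensor product, and Theorem \ref{lem:virtualEqFiniteGroup}, Lemma \ref{lem:virtually cyclic} (which also handles the finite-by-cyclic case, using the regularity and flatness hypotheses you only gesture at) and Theorem \ref{thmFJimpliesGoingDown} give $K_*((M_n^{\oplus k+1})^{\otimes Z}\rtimes G)\cong K_*(A^{\otimes Z}\rtimes G)$. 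Second, the factor $[1/n]$ comes from a separate UHF-type stabilization theorem (Theorem \ref{thm:UHF}, built on the Burnside-ring Proposition \ref{prop:kkG} and Lemmas \ref{lem:ssa}--\ref{lem: UHF N to ZN}), which identifies $K_*((M_n^{\oplus k+1})^{\otimes Z}\rtimes G)=K_*((M_n\otimes R^{\oplus k+1})^{\otimes Z}\rtimes G)$ with $K_*((R^{\oplus k+1})^{\otimes Z}\rtimes G)[1/n]$ via the invertibility of the class $[n^Z]\in A(G)$ after inverting $n$. Finally, even the orbit decomposition of $(R^{\oplus k+1})^{\otimes Z}\rtimes G$ requires one more based virtual equivalence (a unitriangular matrix in $\SL(k+1,\Z)$) to change the basepoint from $R\oplus\dotsb\oplus R$ to $0\oplus\dotsb\oplus 0\oplus R$ before applying the binomial decomposition and Lemma \ref{lem:Green}; your ad hoc choice of the $0$-th coordinate basepoint skips this step, which is again a based-versus-non-based issue. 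In short, your outline reproduces the general scheme (Going-Down, Izumi, orbit decomposition, Burnside ring), but the central construction that makes the argument work --- the $\SL(k+1,\Z)$ basepoint-preserving virtual equivalences and the separate UHF localization theorem --- is absent, and the statement you assert in its place is false as stated.
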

	
	Most of the results in this paper are algebraic $K$-theory analogues of results in \cite{Izumi,XinLi,KN,CEKN}. The two main obstacles in proving these results are 
	\begin{enumerate}
		\item the necessity to deal with virtually cyclic subgroups instead of finite subgroups,
		\item the lack of Kasparov's equivariant $KK$-theory for algebraic $K$-theory.
	\end{enumerate}
	We overcome the first obstacle by imposing regularity assumptions on the underlying categories and by combining the results from \cite{JamesQuinnReich,bartels2022vanishing} to reduce the problems to finite subgroups. This reduction is the main reason why we did not extend our results to $L$-theory. We overcome the second obstacle by introducing our notion of virtual equivalences between additive categories and proving Theorem \ref{introthm:Izumi}. We moreover substitute the representation-theoretic arguments from \cite{KN} that use the representation ring $R(G)=KK^G(\C,\C)$ by combinatorial arguments using the Burnside ring $A(G)$. 
	A possible alternative approach to the second obstacle could be to set up an equivariant version of Blumberg--Gepner--Tabuada's $\infty$-category of noncommutative motives \cite{BlumbergGepnerTabuada} as a replacement of $KK^G$ and to prove Theorem \ref{introthm:Izumi} using this framework. If one is only interested in computing homotopy $K$-theory rather than algebraic $K$-theory, one may moreover use Ellis' equivariant algebraic $kk$-theory \cite{ellis} to prove such a result.
	While these approaches would certainly be more conceptual, our approach has the advantage of being elementary and of avoiding the use of heavy machinery such as stable $\infty$-categories. We hope that this choice makes our paper accessible to a broader audience. We moreover refer the reader to the recent work of Hilman \cite{Hilman} and to upcoming work of Bartels--Nikolaus for possible definitions of the $\infty$-category of $G$-equivariant noncommutative motives. 
	
	\subsection*{Acknowledgements}
	We would like to thank Arthur Bartels for many helpful discussions and suggestions.
	We are moreover greatful to the anonymous referee for a very detailed and helpful report.
	
	\section{$R$-linear categories}\label{sec:Rlinear}

	Throughout this paper, let $R$ be a commutative ring with unit. All tensor products appearing in this paper will be taken over $R$ and will be denoted by $M\otimes N$ rather than $M\otimes_R N$. 
	An \emph{$R$-linear category} is a category which is enriched over $R$-modules. 
	An \emph{$R$-additive category} is an $R$-linear category which is moreover additive. 
	An \emph{$R$-linear functor} between $R$-linear categories is a functor which is $R$-linear on morphism sets. 
	Note that $R$-linear functors between $R$-additive categories preserve direct sums. 
	A basic example of an $R$-linear category is the ring $R$ itself considered as a category with one object. 
	A basic example of an $R$-additive category is the category $\mathcal F^f(R)$ of finitely generated free $R$-modules. 
	
	\begin{remark}
		In order to avoid set-theoretical issues, we assume the existence of sufficiently many Grothendieck universes which we call \emph{small sets, large sets, huge sets}, etc. (see \cite[\S 1.2.15]{Lurie} for this standard procedure). In this way, the collection of all small sets becomes a large set, the collection of all large sets becomes a huge set, and so on. If nothing else is said, ordinary sets are assumed to be small and ordinary categories are assumed to have large object sets and small morphism sets. This framework allows us to rigorously talk about the (huge) category of all (ordinary) categories, the (large) category of all functors between two (ordinary) categories, or (large) direct sums indexed over the objects of an (ordinary) category.
		%
	\end{remark}
	
	For a finite set $(\mathcal A_i)_{i\in I}$ of $R$-linear categories, we define their tensor product $\bigotimes_{i\in I}\mathcal A_i$ as follows: Objects of $\bigotimes_{i\in I}\mathcal A_i$ are tuples $(A_i)_{i\in I}$ of objects $A_i\in \mathcal A_i$ and morphisms $(A_i)_{i\in I}\to (B_i)_{i\in I}$ are given by elements of  $\bigotimes_{i\in I} \Hom_{\mathcal A_i}(A_i,B_i)$. We also write $\mathcal A^{\otimes I}\coloneqq \otimes_{i\in I}\mathcal A$ for a single $R$-linear category $\mathcal A$.	We use the suggestive notation $\otimes_{i\in I}A_i\coloneqq (A_i)_{i\in I}$. The tensor product of $R$-additive categories is not necessarily additive. 
	
	Let $G$ be a group and let $\mathcal A$ be an $R$-linear category. An \emph{action} $\alpha\colon G\curvearrowright \mathcal A$ assigns to each $g\in G$ a functor $\alpha_g\colon \mathcal A\to \mathcal A$ such that $\alpha_1=\id$ and $\alpha_g\alpha_h=\alpha_{gh}$ for all $g,h\in G$. We also call $(A,\alpha)$ (or $A$ if $\alpha$ is understood) a \emph{$G$-$R$-linear category}. An $R$-linear functor $F\colon (A,\alpha)\to (B,\beta)$ is called \emph{equivariant} if we have $F\circ \alpha_g= \alpha_g \circ F$ for all $g\in G$. A natural transformation $\eta\colon F_1\Rightarrow F_2$ between equivariant functors is called \emph{equivariant} if it satisfies $\alpha_g(\eta_A)=\eta_{\alpha_g(A)}$ for all $g\in G$ and $A\in \mathcal A$. 
	In these definitions we insist on equalities rather than equivalences.
	The standard example for a $G$-$R$-linear category is the following: Let $S$ be an $R$-algebra with a left $G$-action by $R$-algebra automorphisms. We define an action $\alpha$ of $G$ on the category $\Mod S$ of right $S$-modules by mapping for each $g\in G$ any right $S$-module $M$ to the right $S$-module $\alpha_g(M)$ which is given by $M$ as a set and equipped with the multiplication $(m,s)\mapsto m\cdot g^{-1}(s)$. 
	
	We introduce an equivariant version of preferred direct sums (c.f. \cite[Section 1.3]{twistedBassHellerSwan}) which is a functorial way of adjoining direct sums. Let $\mathcal A$ be an $R$-linear category with $G$-action $\alpha$. We fix an infinite cardinal $\kappa$ and a $G$-set $\mathcal U$ such that $\mathcal U\times \mathcal U \cong \mathcal U$ and such that $\mathcal U$ contains isomorphic copies of all $G$-sets of cardinality at most $\kappa$. For instance, we can take $\kappa=|\N \times \mathcal P(G)|$ where $\mathcal P(G)$ denotes the power set of $G$ and
		\[\mathcal U\cong \bigsqcup_{H<G}\bigsqcup_{n\in \kappa} G/H,\] 
	where $H$ runs over all subgroups of $G$. Choose an invariant base point $u\in \mathcal U$ and a $G$-equivariant bijection $\tau\colon \mathcal U\times \mathcal U\to\mathcal  U$ satisfying $\tau(u,u)=u$. 
	We denote by $\mathcal A^\kappa$ the category whose objects are tuples $(A_z)_{z\in Z}$, where $Z\subseteq \mathcal U$ is a $G$-set and $A_z\in \mathcal A$ are objects. Morphisms $(A_z)_{z\in Z}\to (B_y)_{y\in Y}$ are given by column-finite matrices $(f_{z,y}\colon A_z\to B_y)_{z\in Z,y\in Y}$ and composition is given by matrix multiplication. 
	We equip $\mathcal A^\kappa$ with the induced $G$-action $\tilde \alpha$ given by $\tilde \alpha_g(A_z)_{z\in Z}\coloneqq (\alpha_g (A_{g^{-1}z}))_{z\in Z}$ for $(A_z)_{z\in Z}\in \mathcal A^\kappa$ and $g\in G$.
	For a collection $\{Z_i\}_{i\in I}$ of $G$-sets indexed over a $G$-set $I$, we define the graph $\mathcal G(\{Z_i\}_{i\in I})\coloneqq \{(z,i)\in \mathcal U\times \mathcal U\mid z\in Z_i\}$. Then $(A_{z'})_{z=\tau(z', i)\in \tau(\mathcal G(\{Z_i\}_{i\in I}))}$ is a concrete model for the direct sum $\bigoplus_{i\in I}(A_z)_{z\in Z_i}$. We also use the suggestive notation $\bigoplus_{z\in Z}A_z\coloneqq (A_z)_{z\in Z}$. 
	We denote by $\mathcal A^f\subseteq \mathcal A^\kappa$ the full subcategory of all objects $(A_z)_{z\in Z}$ where $A_z\simeq 0$ for all but finitely many $z\in Z$. Note that the inclusion $\mathcal A\to \mathcal A^f$ induced by the choice of base-point $u\in \mathcal U$ is an equivalence of categories if and only if $\mathcal A$ is additive. 

	We call $\mathcal A^f$ the \emph{additive completion} of $\mathcal A$. Our primary motivation for introducing $\mathcal{A}^f$ is its use in Section 3. There, given any $G$-set $Z$ and a $G$-equivariant family of functors $(F_z)_{z \in Z}$ from $\mathcal{A}$ to $\mathcal{B}$, we will need a naturally-defined  $G$-equivariant functor $\bigoplus_{z \in Z} F_z$ from $\mathcal{A}$ to $\mathcal{B}^f$.
		 
	
	For a $G$-$R$-linear category $(\mathcal A,\alpha)$, we define an $R$-linear category $\mathcal A\rtimes_\alpha G$ (or $\mathcal A\rtimes G$ if $\alpha$ is understood) as follows. Objects of $\mathcal A\rtimes G$ are objects of $\mathcal A$. A morphism from $A$ to $B$ in $\mathcal A\rtimes G$ consists of a finite formal sum $\sum_{g\in G} f_g g$ where each $f_g\colon \alpha_g(A)\to B, g\in G$ is a morphism in $\mathcal A$. The composition of morphisms is determined by the rule 
	\[(fg)\circ  (f'g')=(f\circ \alpha_g(f'))gg'\]
	for $g,g'\in G, f\in \mathcal A(\alpha_g(B),C)$ and $f'\in \mathcal A(\alpha_{g'}(A),B)$.
	Note that this category was denoted by $\mathcal A*_G\mathrm{pt}$ in \cite[Definition 2.1]{bartels2007coefficients}. 
	If the $G$-action on $\mathcal A$ is trivial, we also use the notation $\mathcal A[G]$ instead of $\mathcal A\rtimes G$. 
	
	\begin{remark}[see {\cite[Remark 2.3]{bartels2007coefficients}}]
		Any $G$-equivariant $R$-linear functor $F\colon \mathcal A\to \mathcal B$ between $G$-$R$-linear categories induces an $R$-linear functor 
			\begin{align*}
				F\rtimes G\colon \mathcal A\rtimes G&\to \mathcal B\rtimes G,\\
				A&\mapsto F(A),\\
				\sum_{g\in G}f_g g&\mapsto \sum_{g\in G}F(f_g)g.
			\end{align*}
		Moreover, $F\mapsto F\rtimes G$ maps equivalences of categories to equivalences of categories. More generally, if $F,F'\colon \mathcal A\to \mathcal B$ are $G$-equivariant $R$-linear functors that are equivalent via some $G$-equvariant natural transformation $\eta\colon F\Rightarrow F'$, then $F\rtimes G$ and $F'\rtimes G$ are equivalent.
	\end{remark}
	
	\begin{example}[{\cite[Example 2.6]{bartels2007coefficients}}]
		Let $S$ be an $R$-algebra with unit and let $G\curvearrowright S$ be a group action by ring automorphisms. We denote by $S\rtimes G$ the $R$-algebra of finite formal sums $\sum_{g\in G}s_g g$ with $s_g\in S$ and multiplication determined by 
		\[(sg)\cdot (s'g')\coloneqq s\cdot g(s') gg'\]
		for $s,s'\in S$ and $g,g'\in G$. \footnote{Note that $S\rtimes G$ is precisely the result of applying the functor $\rtimes G$ to the $R$-linear category with one object and $S$ as endomorphisms.}
		
		The action $G\curvearrowright S$ induces an action $\alpha\colon G \curvearrowright \mathcal F^f(S)$ of $G$ on the category finitely generated free $S$-modules by mapping a module $M$ to the module $\alpha_g(M)$ whose underlying abelian group is that of $M$ and whose $S$-module structure is given by $(m,s)\mapsto m \alpha_{g^{-1}}(s)$. 
		Then we have an equivalence 
		\[\mathcal F^f(S\rtimes G)\simeq \mathcal F^f(S)\rtimes G\]
		of $R$-additive categories.
	\end{example}

	
	\begin{example}
		Let $S, T$ be $R$-algebras with unit. Then we have equivalences of $R$-additive categories 
		\[(\mathcal F^f(S)\otimes \mathcal F^f(T))^f\simeq \mathcal F^f(S\otimes T)\]
		where $S\otimes T$ denotes the tensor product of $R$-algebras. 
	\end{example}
	
	By a \emph{based category}, we mean a category $\mathcal A$ with a preferred object $\mathds{1}_\mathcal A\in \mathcal A$. 
	A functor $F\colon \mathcal A\to \mathcal B$ between based categories is called based, if it satisfies $F(\mathds{1}_\mathcal A)= \mathds{1}_\mathcal B$. The strict identity condition is not essential and could be relaxed to isomorphism with minor adjustments to the subsequent discussion. However, we chose to retain the strict condition for convenience.
	
	\begin{definition}
		If $\mathcal A$ is a based $R$-linear category and if $Z$ is a (possibly infinite) set, we define the tensor product $\mathcal A^{\otimes Z}$, or more precisely ($\mathcal A, \mathds{1}_\mathcal A)^{\otimes Z}$, as the filtered colimit
		\[\varinjlim_{F\Subset Z}\mathcal A^{\otimes F}.\]
		Here $F$ ranges over all finite subsets of $Z$ and the connecting functors
		\[\mathcal A^{\otimes F}\to \mathcal A^{\otimes F'}\]
		are given by $A\mapsto A\otimes \mathds{1}_\mathcal A^{\otimes F'\setminus F}$ on objects and by $f\mapsto f\otimes \id_{\mathds{1}_\mathcal A}^{\otimes F'\setminus F}$ on morphisms. 
	\end{definition}
	
		The colimit is taken inside the category of $R$-linear categories with $R$-linear functors as morphisms. Note that infinite tensor products of based $R$-linear categories are functorial with respect to based $R$-linear functors.

	
	\begin{example}
		Let $S$ be an $R$-algebra and let $Z$ be a (possibly infinite) set. With respect to the rank one module $S\in \mathcal F^f(S)$ as a base point, we have an equivalence
		\[(\mathcal F^f(S)^{\otimes Z})^f\simeq \mathcal F^f(S^{\otimes Z}),\]
		determined by the functors 
		\[\mathcal F^f(S)^{\otimes F}\to \mathcal F^f(S^{\otimes Z}),\quad M\mapsto M\otimes S^{\otimes Z\setminus F}.\]
	\end{example}

	The \emph{idempotent completion} $\Idem(\mathcal A)$ of an $R$-additive category $\mathcal A$ is the $R$-additive category whose objects are pairs $(A,p)$ with $A\in \mathcal A$ and $p=p^2\in \mathcal A(A,A)$ and whose morphisms $(A,p)\to (B,q)$ are given by morphisms $f\colon A\to B$ satisfying $f=qfp$. We call an $R$-additive category $\mathcal A$ \emph{idempotent complete} if the canonical functor $\mathcal A\to \Idem(\mathcal A),\quad A\mapsto (A,\id)$ is an equivalence. 
	If $\mathcal A$ is a not necessarily additive $R$-linear category, we define $\Idem(\mathcal A)\coloneqq \Idem(\mathcal A^f)$. 
	\begin{example} Let $S$ be an $R$-algebra with unit. Denote by $\mathcal F^p(S)$ the category of finitely generated projective $S$-modules. Then the functor 
			\[\Idem(\mathcal F^f(S))\to \mathcal F^p(S),\quad (M, p) \mapsto pM\]
		is an equivalence. 
	\end{example}

	The following lemma tells us that we can perform additive and idempotent completion at various stages of our constructions or at the end, without changing the result. 

	\begin{lemma}\label{lem:idempotentcomplete}
		The following hold true.
		\begin{enumerate}
			\item Let $\mathcal A,\mathcal B$ be $R$-linear categories. Then the natural functor \label{item:tensoridem0}
			\[(\mathcal A\otimes \mathcal B)^f\to (\mathcal A^f\otimes \mathcal B^f)^f\]
			is an equivalence. 
			\item Let $(\mathcal A_i)_i$ be a filtered diagram of $R$-linear categories and $R$-linear functors. Then the natural functor 
			\[(\varinjlim \mathcal A_i)^f\to (\varinjlim \mathcal A_i^f)^f\]
			is an equivalence. 
			\item Let $G$ be a group and let $\mathcal A$ be a $G$-$R$-linear category. Then the natural functor 
			\[(\mathcal A\rtimes G)^f\to (\mathcal A^f\rtimes G)^f\]
			is an equivalence. 
			
			\item Let $\mathcal A,\mathcal B$ be $R$-linear categories. Then the natural functor \label{item:tensoridem}
			\[\Idem(\mathcal A\otimes \mathcal B)\to \Idem(\Idem(\mathcal A)\otimes \Idem(\mathcal B))\]
			is an equivalence. 
			\item Let $(\mathcal A_i)_i$ be a filtered diagram of $R$-linear categories and $R$-linear functors. Then the natural functor 
				\[\Idem(\varinjlim \mathcal A_i)\to \Idem(\varinjlim \Idem(\mathcal A_i))\]
				is an equivalence. 
			\item Let $G$ be a group and let $\mathcal A$ be a $G$-$R$-linear category. Then the natural functor 
				\[\Idem(\mathcal A\rtimes G)\to \Idem(\Idem(\mathcal A)\rtimes G)\]
				is an equivalence. 
		\end{enumerate}
	\end{lemma}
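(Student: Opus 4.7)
The plan is to prove the six statements in two groups: first (1)--(3), which concern the additive completion $(-)^f$, and then derive (4)--(6), which concern $\Idem$, directly from the first three. The common principle behind (1)--(3) is that additive completion is essentially idempotent (so $(\mathcal{A}^f)^f$ is naturally equivalent to $\mathcal{A}^f$), and that each of the three operations $\otimes$, $\varinjlim$, $\rtimes G$ distributes over finite formal direct sums. In each case I will show the stated functor is fully faithful and essentially surjective. The main work is essentially unwinding definitions, so I will state the strategy and the crucial identifications rather than grind matrices.

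For (1), an object of $(\mathcal{A}^f \otimes \mathcal{B}^f)^f$ is a finite formal direct sum of pairs $\bigl((A^{(z)}_y)_{y \in Y_z},\, (B^{(z)}_w)_{w \in W_z}\bigr)$; by distributivity of $\otimes$ over finite direct sums this is canonically isomorphic to the finite direct sum $\bigoplus_{z,y,w} A^{(z)}_y \otimes B^{(z)}_w$, which already lives in $(\mathcal{A} \otimes \mathcal{B})^f$. This gives essential surjectivity. Full faithfulness follows because a morphism on either side is a matrix of tensors of morphisms, and both matrix formalisms correspond under the same re-indexing. The proofs of (2) and (3) are completely parallel: any finite tuple of objects in $\varinjlim \mathcal{A}_i^f$ comes from a single $\mathcal{A}_j^f$ for some $j$ by the filtered colimit condition, so we can present any object of $(\varinjlim \mathcal{A}_i^f)^f$ as a finite formal sum of objects of some $\mathcal{A}_j$; and a morphism $\sum_g f_g g$ in $\mathcal{A}^f \rtimes G$ with $f_g$ a matrix between formal direct sums reorganizes tautologically into a matrix of formal sums $\sum_g f_{g,ij} g$ in $\mathcal{A} \rtimes G$.

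For (4)--(6) I will use the definition $\Idem(\mathcal{A}) = \Idem(\mathcal{A}^f)$, together with the fact that $\Idem$ sends equivalences of $R$-linear categories to equivalences. Applying $\Idem$ to the equivalences of (1)--(3) reduces (4) to showing that the canonical functor $\Idem(\mathcal{A}^f \otimes \mathcal{B}^f) \to \Idem(\Idem(\mathcal{A}) \otimes \Idem(\mathcal{B}))$ is an equivalence, and (5)--(6) to the analogous statements. For this last reduction, the key observation is that for additive categories $\mathcal{C}, \mathcal{D}$, an object $\bigl(((C,p),(D,q)),\, e\bigr)$ of $\Idem(\Idem(\mathcal{C}) \otimes \Idem(\mathcal{D}))$ corresponds to $(C \otimes D,\, e)$ in $\Idem(\mathcal{C} \otimes \mathcal{D})$ since, using $\Hom_{\Idem(\mathcal{C})}((C,p),(C',p')) = p' \Hom(C,C') p$, the element $e$ is automatically an idempotent in $\End(C \otimes D)$ satisfying $e = (p \otimes q)\, e\, (p \otimes q)$; summing over finite direct sums extends this to essential surjectivity, and the same formula gives full faithfulness on morphisms.

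The main obstacle I anticipate is purely bookkeeping: keeping track of the choice of universe $\mathcal{U}$, base point $u$, and re-indexing bijection $\tau$ used in the definition of $\mathcal{A}^f$, and verifying that the canonical functors under consideration are actually strictly $R$-linear and, in the cases (3) and (6), strictly $G$-equivariant with respect to the induced action on $\mathcal{A}^f$. Once this is set up cleanly, all six items follow from the explicit descriptions of objects and morphisms on the two sides, with no further categorical input needed.
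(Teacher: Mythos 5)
Your proposal is correct and matches the paper's approach: the paper's own proof simply asserts that all the functors in question are fully faithful and essentially surjective and leaves the verification to the reader, which is exactly what you carry out (with the minor organizational twist of deducing (4)--(6) from (1)--(3) via the definition $\Idem(\mathcal A)=\Idem(\mathcal A^f)$ and a cofinality argument). Your sketch in fact supplies more detail than the paper itself.
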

	\begin{proof}
		It is routine to check that all the appearing functors are fully faithful and essentially surjective. We leave the details to the reader.
	\end{proof}

	For a finite set $I$ and a set of $R$-linear functors $F_i\colon \mathcal A\to \mathcal B$, we define their direct sum as the functor $F=\bigoplus_{i\in I}F_i\colon \mathcal A\to \mathcal B^f$ by $A\mapsto (F_i(A))_{i\in I}$. If $\mathcal B$ is additive, we may identify $F$ with a functor $\mathcal A\to \mathcal B$. If $I=\{1,\dotsc,n\}$, we may also write $F=F_1\oplus \dotsb \oplus F_n$, well-aware that this notation is imprecise and does not specify the choice of direct sums. 
	The following definition is our key substitute of $C^*$-algebraic $KK$-equivalences in the world of additive categories and algebraic $K$-theory.

	\begin{definition}\label{defn:virtualeq}
		An $R$-linear functor $F\colon \mathcal A\to \mathcal B$ between $R$-additive categories is called a \emph{virtual equivalence}, if there are $R$-linear functors \[{Q_+,Q_-\colon \mathcal B\to \mathcal A}\]
		and natural equivalences 
		\[\eta\colon FQ_+\simeq \id_\mathcal B \oplus FQ_-,\quad \xi\colon Q_+F\simeq \id_\mathcal A\oplus Q_-F.\]
		In this case, we call $(Q_+,Q_-,\xi,\eta)$ a \emph{quasi-inverse} to $F$. 
		An equivariant $R$-linear functor between $G$-$R$-linear categories is called an equivariant virtual equivalence if it has a quasi-inverse $(Q_+,Q_-,\xi,\eta)$ such that $Q_+,Q_-, \xi$ and $\eta$ are equivariant.
		More generally, an $R$-linear functor between $R$-linear categories is called a virtual equivalence if it is a virtual equivalence after additive completion. 
	\end{definition}
	
	Note that any virtual equivalence induces an isomorphism on $K$-theory. 
	
	\begin{remark}
		Let $F\colon \mathcal A\to \mathcal B$ be an equivariant virtual equivalence of $G$-$R$-linear categories. Then $F\rtimes G\colon \mathcal A\rtimes G\to \mathcal B\rtimes G$ is a virtual equivalence.
	\end{remark}
	
	For the next example, recall that for two unital rings $S, S'$, a right $S$-module $M$, and a (not necessarily unital) ring homomorphism $f\colon S\to S'$, we denote by 
	$\Ind_fM$ the right $S'$-module $M\otimes_S f(1)S'$ where $f(1)S'$ is considered as an $S$-$S'$-bimodule via $f$. Note that $\Ind_f$ preserves finitely generated projective modules and that $\Ind_f$ may not preserve free modules if $f$ is not unital. 
	
	\begin{example}\label{eg:finite group ring}
		Let $G$ be a finite group and suppose that $\frac 1 {|G|}\in R$. We denote by 
			\[\varepsilon\colon R[G]\to R,\quad \varepsilon\left(\sum_{g\in G} f_g g\right)=\sum_{g\in G}f_g\]
		the augmentation map, by $I_R[G]\coloneqq \ker \varepsilon$ the augmentation ideal, and by $p\coloneqq \frac 1 {|G|}\sum_{g\in G}(1-g)$ its unit. Note that we have an algebra isomorphism $R\oplus I_R[G] \cong R[G]$, and thus an equivalence $F^p(R)\oplus \mathcal F^p(I_R[G])\cong \mathcal F^p(R[G])$. Below, we give a different functor $F^p(R)\oplus \mathcal F^p(I_R[G])\to \mathcal F^p(R[G])$ which is not necessarily an equivalence but a virtual equivalence. Denote by $j\colon I_R[G]\to R[G]$ the inclusion, by $\pi\colon R[G]\to I_R[G],\quad x\mapsto px$ the projection and by $\iota\colon R\to R[G]$ the unital inclusion. The functor 
		\[F\coloneqq \Ind_\iota\oplus \Ind_j\colon  \mathcal F^p(R)\oplus \mathcal F^p(I_R[G])\to \mathcal F^p(R[G])\]
		sends the rank-one free $R$-module $R$ to the rank-one free $R[G]$-module $R[G]$, and the rank-one free $I_R[G]$-module $I_R[G]$ to the projective $R[G]$-module $I_R[G]\otimes_{I_R[G]}R[G]\cong I_R[G]$. The functor $F$ is a virtual equivalence with quasi-inverse 
		\[(\Ind_\varepsilon\oplus \Ind_\pi,0\oplus \Ind_{\pi\iota\varepsilon}).\]
	\end{example}

	\section{Izumi's theorem}
	This section is devoted to an algebraic $K$-theory analogue of Izumi's \cite[Theorem 2.1]{Izumi} which says that if $A$ and $B$ are $KK$-equivalent separable $C^*$-algebras and $G$ is a finite group, then the Bernoulli shifts $A^{\otimes G}$ and $B^{\otimes G}$ are $KK^G$-equivalent. Our substitute for $KK$-equivalences between $C^*$-algebras is the notion of virtual equivalences between additive categories introduced in Definition \ref{defn:virtualeq}. 
	For a group $G$, a $G$-set $Z$, and a (based) $R$-linear category $\mathcal A$, we always equip the tensor product $\mathcal A^{\otimes Z}$ with the $G$-action given by shifting the tensor factors. 
	\begin{theorem}[Izumi's Theorem]\label{lem:virtualEqFiniteGroup}
		Let $G$ be a finite group, let $Z$ be a finite $G$-set, and let $F\colon \mathcal A\to \mathcal B$ be an $R$-linear virtual equivalence of $R$-additive categories. Then 
			\[F^{\otimes Z}\colon \mathcal A^{\otimes Z}\to \mathcal B^{\otimes Z}\]
		is an equivariant virtual equivalence. In particular,
			\[F^{\otimes Z}\rtimes G\colon \mathcal A^{\otimes Z}\rtimes G\to \mathcal B^{\otimes Z}\rtimes G\]
		is a virtual equivalence.
	\end{theorem}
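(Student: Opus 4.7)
The plan is to construct an explicit quasi-inverse to $F^{\otimes Z}$ by exploiting a binomial cancellation that can be organized $G$-equivariantly. Fix a quasi-inverse $(Q_+,Q_-,\xi,\eta)$ to $F$. The naive guess $Q_+^{\otimes Z}$ fails because $(FQ_+)^{\otimes Z}\simeq(\id_{\mathcal B}\oplus FQ_-)^{\otimes Z}$ expands into $2^{|Z|}$ terms, only one of which is $\id_{\mathcal B^{\otimes Z}}$. The fix is to imitate the multiplicative formula in $K_0$ of the Grothendieck group on the class $[\id]=[FQ_+]-[FQ_-]$. Concretely, define the equivariant $R$-linear functors
\[
Q^Z_+\;\coloneqq\;\bigoplus_{\substack{S\subseteq Z\\ |S|\text{ even}}}Q_-^{\otimes S}\otimes Q_+^{\otimes Z\setminus S},\qquad
Q^Z_-\;\coloneqq\;\bigoplus_{\substack{S\subseteq Z\\ |S|\text{ odd}}}Q_-^{\otimes S}\otimes Q_+^{\otimes Z\setminus S},
\]
interpreted as functors $\mathcal B^{\otimes Z}\to(\mathcal A^{\otimes Z})^f$ via the construction from Section~\ref{sec:Rlinear}. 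Since the collections of even- and odd-cardinality subsets of $Z$ are each $G$-invariant and the summands are permuted by $G$ according to its action on $\mathcal P(Z)$, both $Q^Z_+$ and $Q^Z_-$ are $G$-equivariant.

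Next I would verify the required equivalence $F^{\otimes Z}Q^Z_+\simeq \id\oplus F^{\otimes Z}Q^Z_-$. Applying $\eta$ separately in each tensor slot gives a $G$-equivariant natural equivalence
\[
F^{\otimes Z}Q^Z_\pm\;\simeq\;\bigoplus_{\substack{S\subseteq Z\\ |S|\equiv \varepsilon\pmod 2}}(FQ_-)^{\otimes S}\otimes(\id_{\mathcal B}\oplus FQ_-)^{\otimes Z\setminus S},
\]
with $\varepsilon=0$ or $1$. Expanding the second factor and re-indexing by the total ``minus set'' $U\subseteq Z$, the summand $(FQ_-)^{\otimes U}\otimes\id_{\mathcal B}^{\otimes Z\setminus U}$ appears with multiplicity $\#\{S\subseteq U:|S|\text{ even}\}$ in $F^{\otimes Z}Q^Z_+$ and multiplicity $\#\{S\subseteq U:|S|\text{ odd}\}$ in $F^{\otimes Z}Q^Z_-$. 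These counts are equal to $2^{|U|-1}$ whenever $U\neq\emptyset$, while for $U=\emptyset$ the first count is $1$ and the second is $0$. Pairing up summands for $U\neq\emptyset$ yields the desired equivalence $F^{\otimes Z}Q^Z_+\simeq\id_{\mathcal B^{\otimes Z}}\oplus F^{\otimes Z}Q^Z_-$. An entirely symmetric argument, using $\xi$ in place of $\eta$, gives $Q^Z_+F^{\otimes Z}\simeq\id_{\mathcal A^{\otimes Z}}\oplus Q^Z_-F^{\otimes Z}$.

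The final step is to check that the pairing-up of summands indexed by the same $U$ can be carried out $G$-equivariantly. Since $G$ acts on the index set of summands only by permuting $U$'s of the same cardinality, and within each orbit $G\cdot U$ every summand carries the same ``multiplicity'' of appearances on either side, one can choose a $G$-equivariant bijection between the matching collections of summands — concretely by picking orbit representatives and extending by the group action. Combined with the equivariant natural equivalences produced by $\eta$, this makes $(Q^Z_+,Q^Z_-,\xi^Z,\eta^Z)$ an equivariant quasi-inverse to $F^{\otimes Z}$. The in-particular clause then follows from the remark preceding Definition~\ref{defn:virtualeq} that $-\rtimes G$ takes equivariant virtual equivalences to virtual equivalences.

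The main obstacle is bookkeeping the equivariant natural equivalences at the level of the concrete model of $\mathcal{A}^{\otimes Z,f}$ used in Section~\ref{sec:Rlinear}, since the ``same $U$'' regrouping is only canonical up to a choice of $G$-equivariant reindexing; carrying this out explicitly, rather than only at the level of isomorphism classes, is where care is required. Everything else is the standard distributivity of $\otimes$ over $\oplus$ combined with the elementary binomial identity $\sum_{i\text{ even}}\binom{k}{i}=\sum_{i\text{ odd}}\binom{k}{i}$ for $k\geq 1$.
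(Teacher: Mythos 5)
Your functors $Q^Z_\pm$ are exactly the naive binomial expansion of $(Q_+-Q_-)^{\otimes Z}$, and the gap is in your last step: the claim that the summands indexed by the same total minus set $U$ can be paired off $G$-equivariantly ``by picking orbit representatives and extending by the group action.'' Since the summand $(FQ_-)^{\otimes U}\otimes\id^{\otimes Z\setminus U}$ genuinely depends on $U$, the matching would have to preserve $U$, and over a fixed $U$ it would have to be a $G_U$-equivariant bijection between the even and the odd subsets of $U$. Such a bijection need not exist: for $G=C_2=\{1,g\}$, $Z=G$ and $U=Z$, the even subsets $\emptyset$ and $Z$ are two $G$-fixed points while the odd subsets $\{1\}$ and $\{g\}$ form one free orbit. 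This is not just a defect of the bookkeeping but of the functors themselves; the paper records precisely this counterexample right after introducing the functors $Q_{H,T}$: take $G=C_2$, $Z=G$, $F=\id$, $Q_+=\id\oplus\id$, $Q_-=\id$ on $\mathcal A=(\underline{\mathbb{C}})^f$. Then the index $G$-set of $Q^Z_+F^{\otimes Z}$ consists of three fixed points and one free orbit (permutation character $(5,3)$), whereas that of $\iota\oplus Q^Z_-F^{\otimes Z}$ consists of one fixed point and two free orbits (character $(5,1)$); evaluating a putative equivariant natural equivalence at the tensor unit would give an isomorphism of these permutation representations, which is impossible. So $(Q^Z_+,Q^Z_-)$ is a quasi-inverse only non-equivariantly, and equivariance is exactly what is needed to deduce the statement about $F^{\otimes Z}\rtimes G$.

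This failure is the reason the paper does not use the plain binomial expansion. Its proof replaces your subsets $S$ by the functors $Q_{H,T}=\bigoplus_{gH\in G/H}Q_+^{\otimes Z\setminus gT}\otimes Q_-^{\otimes gT}$, whose index sets are induced $G$-sets $G/H$ rather than power sets, proves an equivariant distributivity statement (Lemma \ref{lem:distributive}) and a cancellation-with-remainder statement (Lemma \ref{lem:remainder}), and then runs an induction on $|T|$, repeatedly adding higher-order correction terms $R^{(k)}$ to both sides until $|T|=|Z|$, which produces the equivariant quasi-inverse $(P_+,P_-)$. If you want to rescue your approach, you must build this orbit-wise indexing and the resulting correction terms into the definition of $Q^Z_\pm$ from the start; that is exactly what the $Q_{H,T}$ and the iterative procedure in the paper accomplish.
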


	We keep the notation of Theorem \ref{lem:virtualEqFiniteGroup} and divide its proof into several lemmas. Let $(Q_+,Q_-,\xi,\eta)$ be a quasi-inverse for $F$. 
	We will construct a pair of functors $P_\pm\colon \mathcal B^{\otimes Z} \to (\mathcal A^{\otimes Z})^f$ and a $G$-equivariant natural equivalence of functors 
	\[P_+F^{\otimes Z}\simeq \iota\oplus P_-F^{\otimes Z}\colon \mathcal A^{\otimes Z}\to (\mathcal A^{\otimes Z})^f,\]
	where $\iota\colon \mathcal A^{\otimes Z}\to (\mathcal A^{\otimes Z})^f$ is the inclusion. The proof of the equivalence $F^{\otimes Z}P_+\simeq \iota\oplus F^{\otimes Z}P_-$ is analogous and left to the reader. 
	\begin{definition}
	For any subset $T\subseteq Z$ and for any subgroup $H\subseteq G_T$ of the setwise stabilizer group of $T$, we define $G$-equivariant functors 
	\[Q_{H,T}\coloneqq \bigoplus_{gH\in G/H}Q_+^{\otimes Z\setminus gT}\otimes Q_-^{\otimes gT}\colon  \mathcal B^{\otimes Z}\to (\mathcal A^{\otimes Z})^f,\]
	\[R_{H,T}\coloneqq \bigoplus_{gH\in G/H}\id^{\otimes Z\setminus gT}\otimes (Q_-F)^{\otimes gT}\colon  \mathcal A^{\otimes Z}\to (\mathcal A^{\otimes Z})^f,\]
	where the $G$-action on the index set $G/H$ is given by left translation. 
	\end{definition}

These formulas may initially appear overwhelming, so we explain their origins. Considering the informal expression \( Q_+ - Q_- \) as the quasi-inverse of \( F \), and leveraging the binomial formula, an initial guess for the quasi-inverse of \( F^{\otimes Z} \) could be constructed by expanding \( (Q_+ - Q_-)^{\otimes Z} \). Specifically, using a combinatorial perspective, we might propose:

\[
(Q_+ - Q_-)^{\otimes Z} = \left( \bigoplus_{\substack{S \subset Z \\ \text{even}}} Q_+^{\otimes (Z \setminus S)} \otimes Q_-^{\otimes S} \right) 
- \left( \bigoplus_{\substack{S \subset Z \\ \text{odd}}} Q_+^{\otimes (Z \setminus S)} \otimes Q_-^{\otimes S} \right),
\]

where the sums are taken over subsets \( S \subset Z \) with even and odd cardinalities, respectively.

However, this expression turns out not to be a valid (\( G \)-equivariant) quasi-inverse of \( F^{\otimes Z} \). The simplest counterexample arises when \( G \) is the cyclic group \( C_2 \) of order 2, \( Z = G \), \( F = \mathrm{id} \), \( Q_+ = \mathrm{id} \oplus \mathrm{id} \), and \( Q_- = \mathrm{id} \) on \( \mathcal{A} = (\underline{\mathbb{C}})^f \), for example. In this case, the expansion does not satisfy the required properties of a quasi-inverse.

The core issue lies in two subtleties: the binomial formula in this setting must account for \( G \)-equivariant indices, which introduces additional complications when we consider the product of such formulas; and the power functor \( (-)^{\otimes Z} \) is not linear, meaning it does not distribute over sums or differences in the straightforward way that one might expect.

Thus, while the proposed expansion might appear intuitive at first glance, we need a more careful treatment to correctly handle the \( G \)-equivariance and the nonlinearity of the power functor. The previously defined \( Q_{H, T} \) accounts for a more general summand of the binomial formula, whose suitable combinations provide the correct quasi-inverse of \( F^{\otimes Z} \).

	\begin{lemma}\label{lem:distributive}
		For any subset $T\subseteq Z$ and for any subgroup $H\subseteq G_T$, we have a $G$-equivariant natural equivalence of functors 
			\[Q_{H,T}F^{\otimes Z}\simeq \bigoplus_{\underset{ S\supset gT}{(gH,S)\in G/H\times \mathcal P(Z),}}\id^{\otimes Z\setminus S}\otimes (Q_-F)^{\otimes S}\colon \mathcal A^{\otimes Z}\to (\mathcal A^{\otimes Z})^f,\]
		where $\mathcal P(Z)$ denotes the power set of $Z$. Here the $G$-action on $G/H\times \mathcal P(Z)$ is induced by left translation on $G/H$ and the given $G$-action on $Z$. 
	\end{lemma}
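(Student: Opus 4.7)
The plan is to expand the composition $Q_{H,T}F^{\otimes Z}$ factor-by-factor using the natural equivalence $\xi\colon Q_+F\simeq \id_\mathcal A\oplus Q_-F$ and then reorganize the resulting sum by the subset on which the $Q_-F$ factors appear. By definition,
\[Q_{H,T}F^{\otimes Z}=\bigoplus_{gH\in G/H}(Q_+F)^{\otimes Z\setminus gT}\otimes (Q_-F)^{\otimes gT}.\]
For each coset $gH\in G/H$, I would apply $\xi$ at every tensor position in $Z\setminus gT$ and then invoke distributivity of $\otimes$ over $\oplus$ to get
\[(\id_\mathcal A\oplus Q_-F)^{\otimes Z\setminus gT}\simeq \bigoplus_{S'\subseteq Z\setminus gT}\id^{\otimes (Z\setminus gT)\setminus S'}\otimes (Q_-F)^{\otimes S'}.\]
Tensoring with $(Q_-F)^{\otimes gT}$ and making the substitution $S:=S'\sqcup gT$, so that $S$ ranges precisely over subsets of $Z$ containing $gT$, each summand becomes $\id^{\otimes Z\setminus S}\otimes (Q_-F)^{\otimes S}$. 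Summing over cosets yields the stated right-hand side, indexed by pairs $(gH,S)\in G/H\times\mathcal P(Z)$ with $S\supseteq gT$.

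For the $G$-equivariance I would argue as follows. Because $\xi$ is a fixed natural equivalence on $\mathcal A$ that is applied symmetrically at every tensor position, the shift action of $h\in G$ permutes the whole expansion in the expected way: on the left it sends the $gH$-th summand of $Q_{H,T}$ to the $(hgH)$-th summand, matching the left-translation action on $G/H$; on the right, since the $G$-action on $Z$ carries $Z\setminus gT$ onto $Z\setminus hgT$, the shift sends the summand $\id^{\otimes Z\setminus S}\otimes (Q_-F)^{\otimes S}$ to $\id^{\otimes Z\setminus hS}\otimes (Q_-F)^{\otimes hS}$, matching the $G$-action on $\mathcal P(Z)$. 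The reindexing bijection $(gH,S')\leftrightarrow (gH,S'\sqcup gT)$ commutes with both actions, so the composite equivalence is $G$-equivariant.

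The main obstacle will be the bookkeeping for $G$-equivariance: each individual application of distributivity is tautologically natural, but verifying that the assembled bijection of index sets intertwines the shift actions on both sides requires carefully tracking how tensor positions are being permuted across different cosets. The underlying combinatorial identity—that a tensor power of a binomial sum expands as a sum indexed by subsets—is standard, and the proof should reduce to making this expansion explicit and equivariant.
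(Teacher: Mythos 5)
Your proposal is correct and follows essentially the same route as the paper: apply $\xi$ factorwise in the positions $Z\setminus gT$, expand the tensor power of $\id\oplus Q_-F$ as a direct sum indexed by subsets, reindex by $S=S'\sqcup gT$, and check that the resulting equivalence is intertwined by the shift action because the components are placed position-symmetrically. The paper merely makes this explicit by writing the distributivity equivalence as $\eta_{gH}=\bigoplus_{S\supset gT}(\id\oplus 0)^{\otimes Z\setminus S}\otimes(0\oplus\id)^{\otimes S\setminus gT}\otimes\id^{\otimes gT}$ and verifying equivariance by direct computation with $\tilde\alpha_g$, which is the bookkeeping step you correctly identify as the only remaining work.
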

\begin{proof}
	We have an equivariant natural equivalence of functors 
	\[\bigoplus_{gH\in G/H}\xi^{\otimes Z\setminus gT}\otimes \id^{\otimes gT}\colon Q_{H,T}F^{\otimes Z}\simeq \bigoplus_{gH\in G/H}(\id \oplus Q_-F)^{\otimes Z\setminus gT}\otimes (Q_-F)^{\otimes gT},\]
	where $\xi$ is the given equivalence $Q_+F\simeq \id_\mathcal A\oplus Q_-F$ from the quasi-inverse of $F$. 
	Using that the direct sum is both a product and a coproduct, we construct for every $gH\in G/H$ a natural equivalence
\begin{equation}\label{eq:Qid=Q}
	 \eta_{gH}\colon \bigoplus_{S\supset gT}\id^{\otimes Z\setminus S}\otimes (Q_-F)^{\otimes S}\simeq (\id \oplus Q_-F )^{\otimes Z\setminus gT}\otimes (Q_-F)^{\otimes gT}.
\end{equation}
	given by 
	\[\eta_{gH}\coloneqq \bigoplus_{S\supset gT} (\id\oplus0)^{\otimes Z\setminus S}\otimes (0\oplus \id)^{\otimes S\setminus gT}\otimes \id^{\otimes gT}.\]
	For $g\in G$ and $A=\otimes_{z\in Z}A_z\in \mathcal A^{\otimes Z}$, we get 
	\begin{align*}
		&\tilde \alpha_g\left(\bigoplus_{hH\in G/H}\eta_{hH}\left(A\right)\right)\\
		&=\tilde \alpha_g \left(\bigoplus_{hH\in G/H}\bigoplus_{S\supset hT}(\id\oplus 0)^{\otimes Z\setminus S}\otimes (0\oplus \id)^{S\setminus hT}\otimes \id^{hT}\right)\\ 
		&=\bigoplus_{hH\in G/H}\bigoplus_{S\supset hT} \alpha_g \left((\id\oplus 0)^{\otimes Z\setminus g^{-1}S}\otimes (0\oplus \id)^{g^{-1}S\setminus g^{-1}hT}\otimes \id^{g^{-1}hT}\right)\\
		&=\bigoplus_{hH\in G/H}{\eta_{hH}}\left(\bigotimes_{z\in Z}A_{g^{-1}z}\right)\\
		&=\bigoplus_{hH\in G/H}\eta_{hH}(\alpha_g(A)),
	\end{align*}
	where $\alpha$ denotes the shift action on $\mathcal A^{\otimes Z}$. 	
	In particular, the induced natural equivalence
	\[\bigoplus_{gH\in G/H}\eta_{gH}\colon Q_{H,T}F^{\otimes Z}\simeq \bigoplus_{\underset{ S\supset gT}{(g,S)\in G/H\times \mathcal P(Z),}}\id^{\otimes Z\setminus S}\otimes (Q_-F)^{\otimes S}\]
	is equivariant.
\end{proof}

\begin{remark}
	Our main reason for introducing non-trivial $G$-actions on the index sets in $(\mathcal A^{\otimes Z})^f$ was to ensure that Lemma \ref{lem:distributive} holds equivariantly.
\end{remark}

\begin{lemma}\label{lem:remainder}
	For any subset $T\subseteq Z$ and for any subgroup $H\subseteq G_T$, we have an equivariant natural equivalence 
		\[Q_{H,T}F^{\otimes Z}\simeq R_{H,T}\oplus R_0\]
	where $R_0$ is a direct sum of finitely many functors of the form $R_{H',T'}$ with $|T'|>|T|$. 
\end{lemma}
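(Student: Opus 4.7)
The plan is to start from the explicit equivariant decomposition of $Q_{H,T}F^{\otimes Z}$ provided by Lemma \ref{lem:distributive}, and then reorganize its indexing $G$-set by partitioning it into $G$-invariant subsets. The contribution of one piece will be exactly $R_{H,T}$, and each orbit of the remaining piece will give an $R_{H',T'}$ of the required form.

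First I would split the indexing set $\{(gH,S)\in G/H\times\mathcal P(Z):S\supset gT\}$ into the two $G$-invariant subsets $I_0=\{(gH,S):S=gT\}$ and $I_+=\{(gH,S):S\supsetneq gT\}$. Since $H\subseteq G_T$, the set $gT$ depends only on the coset $gH$, so the projection $(gH,gT)\mapsto gH$ is a $G$-equivariant bijection $I_0\cong G/H$. The corresponding direct summand is therefore precisely $R_{H,T}$ by definition.

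Next I would decompose $I_+$ into its (finitely many, since $Z$ is finite) $G$-orbits. Given any $(gH,S)\in I_+$, translation by $g^{-1}$ yields a representative of the form $(H,S')$ with $S'=g^{-1}S\supsetneq T$, so every orbit of $I_+$ admits such a representative. The stabilizer of $(H,S')$ under the $G$-action is $H\cap G_{S'}$, so the orbit is equivariantly isomorphic to $G/(H\cap G_{S'})$, and its contribution to the big direct sum is
\[\bigoplus_{k(H\cap G_{S'})\in G/(H\cap G_{S'})}\id^{\otimes Z\setminus kS'}\otimes(Q_-F)^{\otimes kS'}=R_{H\cap G_{S'},S'}.\]
Since $H\cap G_{S'}\subseteq G_{S'}$ and $|S'|>|T|$, this is a functor of the form required for $R_0$.

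I do not expect any real obstacle beyond careful bookkeeping of cosets, stabilizers, and orbit representatives. The equivariance of the resulting natural equivalence is automatic because both $I_0$ and each orbit of $I_+$ are $G$-invariant subsets of the indexing $G$-set, so the induced direct sum decomposition of the right-hand side of Lemma \ref{lem:distributive} is equivariant on the nose.
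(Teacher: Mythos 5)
Your proposal is correct and follows essentially the same route as the paper: apply Lemma \ref{lem:distributive}, split off the summand indexed by $S=gT$ (which is $R_{H,T}$), and decompose the remaining index set $\{(gH,S): S\supsetneq gT\}$ into finitely many $G$-orbits with representatives $(H,S_i)$, $S_i\supsetneq T$, yielding summands $R_{H\cap G_{S_i},S_i}$. The only difference is that you spell out the orbit--stabilizer bookkeeping slightly more explicitly than the paper does.
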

\begin{proof}
	By Lemma \ref{lem:distributive}, we have 
	\begin{align*}
		Q_{H,T}F^{\otimes Z}&\simeq \bigoplus_{\underset{ S\supset gT}{(gH,S)\in G/H\times \mathcal P(Z)}}\id^{\otimes Z\setminus S}\otimes (Q_-F)^{\otimes S}\\
		&= R_{H,T}\oplus \underbrace{\bigoplus_{\underset{ S\supsetneqq gT}{(gH,S)\in G/H\times \mathcal P(Z)}}\id^{\otimes Z\setminus S}\otimes (Q_-F)^{\otimes S}}_{\eqqcolon R_0}.
	\end{align*}
	By decomposing the $G$-set $\{(gH,S)\mid gH\in G/H, S\supsetneqq gT\}$ into orbits we can find $S_1,\dotsc,S_n\supsetneqq T$ such that 
		\[\{(gH,S)\mid gH\in G/H, S\supsetneqq gT\}= \bigsqcup_{i=1}^n G\cdot\{(H,S_i)\}.\]
	We write $H_i\coloneqq H\cap G_{S_i}$ and conclude that 
		\[R_0\simeq R_{H_1,S_1}\oplus \dotsb \oplus R_{H_n,S_n}.\]
	
\end{proof}

\begin{proof}[Proof of Theorem \ref{lem:virtualEqFiniteGroup}]
	We construct equivariant functors $P_\pm \colon \mathcal B^{\otimes Z}\to (\mathcal A^{\otimes Z})^f$ and an equivariant natural equivalence $P_+F^{\otimes Z}\simeq\iota \oplus P_-F^{\otimes Z}$ where $\iota\colon \mathcal A^{\otimes Z}\to (\mathcal A^{\otimes Z})^f$ is the natural inclusion. An analogous argument will produce a natural equivalence $F^{\otimes Z}P_+\simeq\iota \oplus F^{\otimes Z}P_-$. The theorem then follows by applying $-\rtimes G$ to these equivalences and composing everything with the equivalence between $((\mathcal A^{\otimes Z})^f\rtimes G)^f\simeq (\mathcal A^{\otimes Z}\rtimes G)^f$ and $((\mathcal B^{\otimes Z})^f\rtimes G)^f\simeq (\mathcal B^{\otimes Z}\rtimes G)^f$ from Lemma \ref{lem:idempotentcomplete}.
	
	By Lemma \ref{lem:remainder}, we have a $G$-equivariant natural equivalence 
	\begin{equation}\label{QRR0}
		Q_{G,\emptyset}F^{\otimes Z}\simeq R_{G,\emptyset}\oplus R^{(1)},
	\end{equation}
	where $R^{(1)}$ is of the form $R^{(1)}=\bigoplus_{i=1}^{n_1}R_{H^{(1)}_i,S^{(1)}_i}$ with $|S^{(1)}_i|\geq 1$. 
	For each $i=1,\dotsc,n_1$, Lemma \ref{lem:remainder} gives us a $G$-equivariant natural equivalence 
		\begin{equation}\label{eq:QRR1}
			Q_{H^{(1)}_i,S^{(1)}_i}F^{\otimes Z}\simeq R_{H^{(1)}_i,S^{(1)}_i}\oplus R^{(2)}_i
		\end{equation}
	where each $R^{(2)}_i$ is a direct sum of functors of the form $R_{H,S}$ with $|S|\geq 2$. 
	We write $R^{(2)}=\bigoplus_{i=1}^{n_1}R^{(2)}_i= \bigoplus_{i=1}^{n_2} R_{H^{(2)}_i,S^{(2)}_i}$. By summing up \eqref{eq:QRR1} over $i=1,\dotsc,n_1$, we get 
		\begin{equation}\label{eq:QRR2}
			\bigoplus_{i=1}^{n_1} Q_{H^{(1)}_i,S^{(1)}_i}F^{\otimes Z}\simeq R^{(1)}\oplus R^{(2)}.
		\end{equation}
	We now add $R^{(2)}$ to \eqref{QRR0} and apply \eqref{eq:QRR2} to get an equivariant natural equivalence 
	\begin{equation}\label{eq:QRR3}
		 Q_{G,\emptyset}F^{\otimes Z}\oplus R^{(2)}\simeq R_{G,\emptyset}\oplus \bigoplus_{i=1}^{n_1} Q_{H^{(1)}_i,S^{(1)}_i}F^{\otimes Z}.
	\end{equation}
	In the same way as in \eqref{eq:QRR2}, we get 
		\begin{equation}\label{eq:QRR4}
		\bigoplus_{i=1}^{n_2} Q_{H^{(2)}_i,S^{(2)}_i}F^{\otimes Z}\simeq R^{(2)}\oplus R^{(3)},
	\end{equation}
	where $R^{(3)}$ is a direct sum of functors of the form $R_{H,S}$ with $|S|\geq 3$.
	We again add $R^{(3)}$ to \eqref{eq:QRR3} and apply \eqref{eq:QRR4} to get 
	\begin{equation}\label{eq:QRR5}
		Q_{G,\emptyset}F^{\otimes Z}\oplus \bigoplus_{i=1}^{n_2}Q_{H^{(2)}_i,S^{(2)}_i}F^{\otimes Z}\simeq R_{G,\emptyset}\oplus \bigoplus_{i=1}^{n_1} Q_{H^{(1)}_i,S^{(1)}_i}F^{\otimes Z}\oplus R^{(3)}.
	\end{equation}
	We continue this process inductively and construct $R^{(k)},n_k, H_i^{(k)}$ and $S_i^{(k)}$ as above for $k=1,2,\dotsc, |Z|$. 
	Now note that $R_{H,Z}=Q_{H,Z}F^{\otimes Z}$ for any subgroup $H\subseteq G$ and that $R_{G,\emptyset}=\iota$. If we conveniently write $Q_{G,\emptyset}=Q_{H_1^{(0)},S_1^{(0)}}$ and apply the above produre until $k=|Z|$, we therefore get an equivariant natural equivalence
	\[\bigoplus_{\underset{\text{even}}{0\leq k \leq |Z|}}\bigoplus_{i=1}^{n_k}Q_{H_i,S_i}F^{\otimes Z}\simeq\iota \oplus \bigoplus_{\underset{\text{odd}}{0\leq k \leq |Z|}}\bigoplus_{i=1}^{n_k}Q_{H_i,S_i}F^{\otimes Z}.\]
	The desired functors $P_\pm$ may thus be chosen as  
	\[P_+\coloneqq \bigoplus_{\underset{\text{even}}{0\leq k \leq |Z|}}\bigoplus_{i=1}^{n_k}Q_{H_i,S_i},\quad P_-\coloneqq \bigoplus_{\underset{\text{odd}}{0\leq k \leq |Z|}}\bigoplus_{i=1}^{n_k}Q_{H_i,S_i}.\]
\end{proof}

\section{$K$-theory of wreath products}	\label{sec-wreath}
	
	Recall that a ring $S$ is called \emph{Noetherian} if every submodule of a finitely generated $S$-module is finitely generated, \emph{regular coherent} if every finitely presented $S$-module has a finite projective resolution, and \emph{regular} if it is Noetherian and regular coherent.
	Analogous definitions were introduced in \cite[Definition 6.2]{bartels2022vanishing} for any additive category $\mathcal A$.
	We recall these for the reader's convenience. A right $\Z\mathcal A$-module (or simply $\Z\mathcal A$-module) is a contravariant $\Z$-linear functor $\mathcal A\to \Ab$. We denote the category of $\mathbb Z\mathcal A$-modules with $\Z$-linear transformations as morphisms by $\Mod \Z\mathcal A$. 
	A $\Z \mathcal A$-module $M$ is called \emph{finitely generated free} if it is isomorphic to a direct sum $\bigoplus_{i\in I}\Hom_\mathcal A(-, A_i)$, and \emph{finitely generated} if it is isomorphic to the quotient of a finitely generated free $\Z \mathcal A$-module. A $\Z\mathcal A$-module $A$ is called \emph{finitely presented} if there is an exact sequence $F_1\to F_0\to M\to 0$ where $F_0$ and $F_1$ are finitely generated free $\Z\mathcal A$-modules. A $\Z\mathcal A$-module is finitely generated projective if it is a retract of a finitely generated free $\Z\mathcal A$-module\footnote{We encourage the reader to verify that this characterization is indeed equivalent to the definition projectivity in the sense of abelian categories.}. A \emph{finite projective resolution} of a $\Z\mathcal A$-module $M$ is an exact sequence $0\to P_n\to \dotsc\to P_0\to M\to 0$ where each $P_i$ is a finitely generated projective $\Z\mathcal A$-module. 
	We call $\mathcal A$ \emph{Noetherian} if every submodule of a finitely generated $\Z\mathcal A$-module is finitely generated, \emph{regular coherent} if every finitely presented $\Z\mathcal A$-module has a finite projective resolution, and \emph{regular} if it is Noetherian and regular coherent.	
	A ring is Noetherian (resp. regular, resp. regular coherent) if and only if the category of finitely generated free (or equivalently projective) modules is \cite[Corollary 6.5]{bartels2022vanishing}. 
%
%

	The following lemma is well-known for rings. We adapt the proof of \cite[Lemma 5.7]{Grunewald} to the setting of additive categories. Recall that a functor between abelian categories is called \emph{exact} if it preserves exact sequences. We call a functor $\iota\colon \mathcal A \to \mathcal B $ of additive categories  \emph{flat} if the induction functor $\iota_\ast\colon \Mod \Z \mathcal A\to \Mod \Z \mathcal B$ is exact.  
	\begin{lemma}\label{lem:regular}
		Let $G$ be a finite group such that $\frac 1 {|G|}\in R$ and let $\mathcal A$ be a regular $G$-$R$-additive category. Then $\mathcal A\rtimes G$ is regular as well. 
	\end{lemma}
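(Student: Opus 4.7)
The strategy is the classical ring-theoretic argument (restrict, resolve, induce, split) adapted to additive categories via the inclusion $\iota\colon\mathcal A\hookrightarrow\mathcal A\rtimes G$ picking out the identity element of $G$. Let $\iota^*\colon \Mod\Z(\mathcal A\rtimes G)\to \Mod\Z\mathcal A$ be restriction along $\iota$ and let $\iota_*$ be its left adjoint (left Kan extension).

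The first step is to record the basic properties of $\iota^*$ and $\iota_*$. Since $\Hom_{\mathcal A\rtimes G}(-,A)\circ\iota=\bigoplus_{g\in G}\Hom_\mathcal A(-,\alpha_{g^{-1}}A)$ as $\Z\mathcal A$-modules, $\iota^*$ sends the representable at $A$ to a finite direct sum of representables. Hence $\iota^*$ is exact and takes finitely generated free (resp.\ finitely presented) $\Z(\mathcal A\rtimes G)$-modules to finitely generated free (resp.\ finitely presented) $\Z\mathcal A$-modules. For $\iota_*$, the formula above and Yoneda identify
\[\iota_* N(A)\;\cong\;\bigoplus_{g\in G}N(\alpha_g A),\]
from which exactness of $\iota_*$ is immediate (finite direct sums of evaluations are exact). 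In particular $\iota_*$ preserves finitely generated free and finitely generated projective modules.

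The second step is the averaging splitting. For a $\Z(\mathcal A\rtimes G)$-module $M$ and each $g\in G$ there is a canonical isomorphism $\tilde g\colon A\xrightarrow{\sim}\alpha_g A$ in $\mathcal A\rtimes G$ given by $\id_{\alpha_g A}\cdot g$. The counit of adjunction $\varepsilon_M\colon\iota_*\iota^*M\to M$ is given at $A$ by $(m_g)_{g\in G}\mapsto\sum_g M(\tilde g)(m_g)$. Using that $\frac{1}{|G|}\in R$ (and hence acts on all $R$-linear hom-abelian-groups of $\mathcal A\rtimes G$), define
\[s_{M,A}\colon M(A)\to\bigoplus_{g\in G}M(\alpha_g A),\qquad s_{M,A}(m)=\tfrac{1}{|G|}\bigl(M(\tilde g^{-1})(m)\bigr)_{g\in G}.\]
A direct computation using $\tilde g^{-1}\tilde g=\id_A$ shows $\varepsilon_M\circ s_M=\id_M$, and naturality of $\tilde g$ with respect to morphisms in $\mathcal A$ and the shift identities $\tilde h\circ\tilde g^{-1}=\widetilde{hg^{-1}}$ show that $s_M$ is a map of $\Z(\mathcal A\rtimes G)$-modules. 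Thus $M$ is always a direct summand of $\iota_*\iota^*M$.

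To conclude regularity, let $M$ be a finitely presented $\Z(\mathcal A\rtimes G)$-module. Then $\iota^*M$ is finitely presented over $\Z\mathcal A$, so by regular coherence of $\mathcal A$ it admits a finite resolution $0\to P_n\to\cdots\to P_0\to\iota^*M\to 0$ by finitely generated projective $\Z\mathcal A$-modules. Applying the exact functor $\iota_*$ yields a finite resolution of $\iota_*\iota^*M$ by finitely generated projective $\Z(\mathcal A\rtimes G)$-modules, and passing to the retract $M$ (projective summands of projectives are projective) gives a finite projective resolution of $M$. For Noetherianness, given a submodule $N\subseteq M$ of a finitely generated $\Z(\mathcal A\rtimes G)$-module, the module $\iota^*N$ is a submodule of the finitely generated $\Z\mathcal A$-module $\iota^*M$, hence finitely generated by Noetherianness of $\mathcal A$; then $\iota_*\iota^*N$ is finitely generated and its retract $N$ is too. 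The main obstacle is the verification that $s_M$ is natural for the full morphism complex of $\mathcal A\rtimes G$ (not only for morphisms coming from $\mathcal A$), which requires carefully tracking the permutation action on the summands $\bigoplus_g N(\alpha_g A)$; everything else is bookkeeping around Lemma \ref{lem:idempotentcomplete} and the standard facts about adjoint functors.
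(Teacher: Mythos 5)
Your proposal is correct and follows essentially the same route as the paper: restriction $\iota^*$ and induction $\iota_*$ along $\iota\colon\mathcal A\to\mathcal A\rtimes G$, the explicit pointwise formulas showing both functors are exact and preserve finitely generated free/projective modules, and the $\frac{1}{|G|}$-averaging section of the counit exhibiting every $\Z(\mathcal A\rtimes G)$-module as a direct summand of $\iota_*\iota^*M$, from which Noetherianity and regular coherence follow. The only soft spot is the final step, where ``passing to the retract'' needs the standard (but not purely formal) fact that a finitely generated direct summand of a module admitting a finite projective resolution again admits one --- your parenthetical ``projective summands of projectives are projective'' does not by itself justify this, and the paper handles it by an iterated pullback construction starting from the split inclusion $M\to\iota_*\iota^*M$ --- but this matches the level of detail of the paper's own sketch.
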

	\begin{proof}
		We denote by $\iota\colon \mathcal A\to \mathcal A\rtimes G$ the canonical inclusion and by 
		\[\iota^*\colon \Mod \Z(\mathcal A\rtimes G)\to \Mod \Z\mathcal A,\quad M\mapsto M\circ \iota\]
		\[\iota_*\colon \Mod \Z \mathcal A\to \Mod \Z(\mathcal A\rtimes G),\quad M\mapsto M(?)\otimes_{\Z\mathcal A}\Hom_{\mathcal A\rtimes G}(-,\iota(?))\]
		the canonical restriction and induction functors. 
		Here, $M(?)\otimes_{\Z\mathcal A}\Hom_{\mathcal A\rtimes G}(A,\iota(?))$ is the quotient of 
		\[\bigoplus_{B\in \mathcal A}M(B)\otimes_{\Z}\Hom_{\mathcal A\rtimes G}(A,\iota(B))\]
		by the submodule generated by elements of the form $f^*(x)\otimes y-x\otimes f_*( y)$ for $f\in \Hom_{\mathcal A}(B_1,B_2), x\in M(B_2)$, and $y\in \Hom_{\mathcal A\rtimes G}(A,\iota(B_1))$. 
		
		The restriction functor $\iota^*$ is exact. The induction functor $\iota_*$ is exact as well, i.e. $\iota$ is flat (see below).
		
		\begin{claim}\label{claim:splitsummand}
		Every $\mathbb Z(\mathcal A\rtimes G)$-module $M$ is a summand of $\iota_*\iota^*M$ in $\Mod \Z (\mathcal A\rtimes G)$. 
		\end{claim}
		The desired inclusion $j\colon M\to \iota_*\iota^* M$ is given by 
		\[M(A)\ni x\mapsto \frac 1 {|G|} \sum_{g\in G} g^*(x)\otimes g^{-1}\in \iota_*\iota^*M(A)\]
		where 
		\[g^*(x)\otimes g^{-1}\in M(\alpha_{g^{-1}}(A))\otimes_\Z \Hom_{\mathcal A\rtimes G}(A,\alpha_{g^{-1}}(A)).\]
		To see that $j$ is a module map, let $f\in \Hom_\mathcal A(A,B)$ and $x\in M(B)$. Then we have 
		\begin{align*}
			f^*(j(x))&= \frac 1 {|G|}\sum_{g\in G} g^*(x)\otimes g^{-1}\circ f\\
			&= \frac 1 {|G|}\sum_{g\in G} g^*(x)\otimes \alpha_{g^{-1}}(f)\circ g^{-1}\\
			&= \frac 1 {|G|}\sum_{g\in G} \alpha_{g^{-1}}(f)^*g^*(x)\otimes g^{-1}\\
			&= \frac 1 {|G|}\sum_{g\in G} g^*(f^*(x))\otimes g^{-1}	\\
			&= j(f^*(x)).
		\end{align*}
		The desired retraction is given by the co-unit 
		$\pi\colon \iota_*\iota^*M(A)\to M(A)$
		which is given by 
		\[ M(B)\otimes_\Z \Hom_{\mathcal A\rtimes G}(A,B)\ni x\otimes f\mapsto f^*(x)\in M(A). \]
		It is easy to see that $\pi\circ j=\id$. This verifies Claim \ref{claim:splitsummand}.
	
		\begin{claim}\label{claim:iotafg}
			$\iota^*$ preserves finitely generated free modules. 
		\end{claim}
		Note that for any object $A\in \mathcal A$, we have a canonical isomorphism of $\Z\mathcal A$-modules
		\begin{equation}\label{eq:finitelygenerated}
		\iota^*\Hom_{\mathcal A\rtimes G}(-,A)\cong \bigoplus_{g\in G} \Hom_\mathcal A(\alpha_g(-),A)\cong \bigoplus_{g\in G}\Hom_\mathcal A(-,\alpha_{g^{-1}}(A)).
		\end{equation}
		This proves Claim \ref{claim:iotafg}.
		
		We now explain why $\iota_*$ is exact. Since $\iota$ is essentially surjective, it is enough to see its restriction $\iota^{\ast}\iota_\ast $ is exact. For any $\Z\mathcal A$-module $M$, we have a natural isomorphism
		\[
	(\iota^{\ast}\iota_\ast M)(A) \cong \oplus_{g\in G}M(\alpha_g(A)) 
		\]
for any object $A$ in $\mathcal A$. This is obtained by sending
\[
x\otimes f_gg  \in M(B)\otimes_{\Z}\Hom_{\mathcal A\rtimes G}(\iota(A),\iota(B))
\]
for $f_g\in \Hom_{\mathcal{A}}(\alpha_g(A), B)$ to 
\[
f_g^*x \in M(\alpha_g(A))
\]
We leave it to the reader to verify that this is well-defined and natural, establishing the natural isomorphism $\iota^{\ast}\iota_\ast M \cong \oplus_{g\in G}M(\alpha_g-)$. With this in hand, it follows that $\iota_\ast$ is exact.

		To see that $\mathcal A\rtimes G$ is Noetherian, let $N\subseteq M$ be a submodule of a finitely generated $\Z(\mathcal A\rtimes G)$-module. 
		Then $\iota^*N\subseteq \iota^*M$ is finitely generated by Claim \ref{claim:iotafg} and by the exactness of $\iota^*$. Since $\mathcal A$ is Noetherian by assumption, we find an epimorphism $F\twoheadrightarrow \iota^* N$ where $F$ is a finitely generated free $\Z\mathcal A$-module. Since $\iota_*$ is right exact (being a left adjoint) and preserves finitely generated free modules (being a left Kan extension, see also the comments at the end of \cite[Section 5.A]{bartels2022vanishing}), we get an epimorphism $\iota_*F\twoheadrightarrow \iota_*\iota^* N\twoheadrightarrow N$, where $\iota_*F$ is a finitely generated $\Z(\mathcal A\rtimes G)$-module. This implies that $N$ is finitely generated and therefore that $\mathcal A\rtimes G$ is Noetherian.

		To see that $M$ is also regular coherent, let $M\in \Mod \Z(\mathcal A\rtimes G)$ be finitely generated. 
		Since $\iota^*M\in \Mod \Z\mathcal A$ is finitely generated by Claim \ref{claim:iotafg}, we can find a finite projective resolution $P_\bullet\to \iota^*M$. 
		Then $\iota_*P_\bullet\to \iota_*\iota^*M$ is a finite projective resolution of $\Z(\mathcal A\rtimes G)$-modules. 
		Since $M$ is a summand of $\iota_*\iota^*M$ by Claim \ref{claim:splitsummand}, it has a finite projective resolution as well. Indeed, such a projective resolution can be constructed by iteratively taking the pullback of the resolution $\iota_*P_\bullet\to \iota_*\iota^*M$, starting with the split inclusion $M\to   \iota_*\iota^*M$. The exactness of the resulting sequence can be verified directly, pointwise.
				
	\end{proof}

	\begin{lemma}\label{lem:virtually cyclic}
		Let $H$ be a finite group such that $\frac 1 {|H|}\in R$ and let $\varphi\in \Aut(H)$ be an automorphism. Let $Z$ be a $G$-set, where $G=H\rtimes_\varphi \Z$. Let $F\colon \mathcal A\to \mathcal B$ be a based $R$-linear functor of based $R$-additive categories which is furthermore a virtual equivalence. Assume further that $\mathcal A^{\otimes n}$ and $\mathcal B^{\otimes n}$ are regular for every $n\geq 1$ and that for every inclusion of finite $H$-invariant subsets $Y\subseteq Y'\subseteq Z$, the functors 
			\[\mathcal A^{\otimes Y}\rtimes H\to \mathcal A^{\otimes Y'}\rtimes H, \quad  \mathcal B^{\otimes Y}\rtimes H\to  \mathcal B^{\otimes Y'}\rtimes H\]
		are flat inclusions. 
		Then the induced map 
		\[K_*(F^{\otimes Z}\rtimes G)\colon K_*(\mathcal A^{\otimes Z}\rtimes G)\to  K_*(\mathcal B^{\otimes Z}\rtimes G)\]
		is an isomorphism.
	\end{lemma}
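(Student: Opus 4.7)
The plan is to reduce to the finite-group situation of Theorem \ref{lem:virtualEqFiniteGroup} via a twisted Bass--Heller--Swan splitting combined with a filtered-colimit argument. First I would exploit the decomposition $G=H\rtimes_\varphi\Z$ to identify
\[\mathcal A^{\otimes Z}\rtimes G\simeq (\mathcal A^{\otimes Z}\rtimes H)\rtimes_\psi\Z,\]
where $\psi$ denotes the automorphism of $\mathcal A^{\otimes Z}\rtimes H$ induced by a chosen generator of $\Z$ (combining $\varphi$ on $H$ with the $\Z$-shift on $Z$), and similarly for $\mathcal B$. Under this identification $F^{\otimes Z}\rtimes G$ corresponds to $(F^{\otimes Z}\rtimes H)\rtimes\Z$, so it will be enough to establish (i) that $F^{\otimes Z}\rtimes H$ induces an isomorphism on $K$-theory and (ii) that the twisted Bass--Heller--Swan machinery applies to reduce the $\Z$-layer to this statement.

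For (ii), I would first show that $\mathcal A^{\otimes Z}\rtimes H$ (and likewise $\mathcal B^{\otimes Z}\rtimes H$) is regular. Writing $\mathcal A^{\otimes Z}=\varinjlim_Y \mathcal A^{\otimes Y}$ as $Y$ ranges over finite $H$-invariant subsets of $Z$, each $\mathcal A^{\otimes Y}\rtimes H$ is regular by Lemma \ref{lem:regular} (using $|Y|<\infty$, the hypothesis that $\mathcal A^{\otimes |Y|}$ is regular, and $1/|H|\in R$); the assumed flatness of the connecting functors then propagates regularity to the filtered colimit. Regularity forces the twisted Nil-terms to vanish, so the twisted Bass--Heller--Swan theorem (in the form available from \cite{twistedBassHellerSwan}) yields a natural split short exact sequence expressing $K_*((\mathcal A^{\otimes Z}\rtimes H)\rtimes_\psi\Z)$ in terms of the $\psi$-coinvariants and $\psi$-invariants of $K_*(\mathcal A^{\otimes Z}\rtimes H)$, and similarly for $\mathcal B$. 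Naturality of this splitting, together with the five-lemma, reduces the theorem to part (i).

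For (i), I would invoke the continuity of non-connective $K$-theory under filtered colimits of additive categories to write $K_*(\mathcal A^{\otimes Z}\rtimes H)\cong \varinjlim_Y K_*(\mathcal A^{\otimes Y}\rtimes H)$, and likewise for $\mathcal B$. For each finite $H$-invariant subset $Y\subseteq Z$, Izumi's theorem (Theorem \ref{lem:virtualEqFiniteGroup}), applied with the finite group $H$ acting on the finite $H$-set $Y$, says that $F^{\otimes Y}\rtimes H$ is a virtual equivalence and therefore induces an isomorphism on $K$-theory; passing to the colimit gives the desired conclusion. I expect the main technical burden to lie in carefully verifying that the flatness hypothesis really does transport regularity across the filtered colimit (via the module-theoretic criteria of \cite[Section 6]{bartels2022vanishing}) and in invoking the twisted Bass--Heller--Swan theorem at the level of additive categories rather than rings.
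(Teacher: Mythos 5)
Your proposal follows essentially the same route as the paper: the identification $\mathcal A^{\otimes Z}\rtimes G\simeq(\mathcal A^{\otimes Z}\rtimes H)\rtimes_\Phi\Z$, regularity of each $\mathcal A^{\otimes Y}\rtimes H$ via Lemma \ref{lem:regular} plus flatness of the connecting functors to control the colimit, a Bass--Heller--Swan-type reduction of the $\Z$-layer, and then continuity of $K$-theory plus Theorem \ref{lem:virtualEqFiniteGroup} for the finite-group statement. The only difference is bookkeeping at the $\Z$-step: the paper invokes \cite[Theorem 7.8]{bartels2022vanishing}, which requires regular coherence of the polynomial extensions $(\mathcal A^{\otimes Z}\rtimes H)[\Z^m]$ for all $m\geq 0$ (obtained from \cite[Lemma 11.2]{bartels2022vanishing} combined with \cite[Theorem 10.1]{bartels2022vanishing}, since the colimit is only regular coherent, not Noetherian), rather than a bare twisted Bass--Heller--Swan statement for the crossed-product category itself.
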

	\begin{proof}
		We prove the lemma by reducing it to Theorem \ref{lem:virtualEqFiniteGroup}. 
				Denote by $x\in G=H\rtimes_\varphi\Z$ the generator of $\Z$ so that $xhx^{-1}=\varphi(h)$ for all $h\in H$. We denote the shift actions on $\mathcal A^{\otimes Z}$ and $\mathcal B^{\otimes Z}$ by $\alpha$ and $\beta$ respectively. 
		We define an automorphism $\Phi\in \Aut(\mathcal A^{\otimes Z}\rtimes H)$ by $A\mapsto \alpha_x(A)$ on objects and by
			\[\sum_{h\in H}f_h h\mapsto\sum_{g\in G} \alpha_x(f_h)\varphi(h)\]
		on morphisms.
		Then there is a natural isomorphism of $R$-additive categories 
			\begin{equation}\label{eq:semidirectproductdecom}
			 \mathcal A^{\otimes Z}\rtimes(H\rtimes_\varphi \Z)\simeq (\mathcal A^{\otimes Z}\rtimes H)\rtimes_\Phi \Z, 
			\end{equation}
		given by the identity on objects and by
			\[\sum_{(h,n)\in H\rtimes_\varphi \Z}f_{h,n}(h,n)\mapsto \sum_{n\in \Z}\left(\sum_{h\in H}f_{h,n}h\right)  n\]
		on morphisms. 
		We have an analogous automorphism $\Psi\in \Aut(\mathcal B^{\otimes Z}\rtimes H)$ and an isomorphism
			\begin{equation}\label{eq:semidirectproductdecom2}
			\mathcal B^{\otimes Z}\rtimes (H\rtimes_\phi \Z)\simeq (\mathcal B^{\otimes Z}\rtimes H)\rtimes_\Psi \Z.
			\end{equation}
		Note that $\mathcal A^{\otimes Z}\rtimes H$ is equal to the increasing union $\bigcup_Y A^{\otimes Y}\rtimes H$ where $Y$ ranges over all finite $H$-invariant subsets of $Z$. 
		Each $\mathcal A^{\otimes Y}\rtimes H$ is regular by Lemma \ref{lem:regular}. 
		Since the inclusions $\mathcal A^{\otimes Y}\rtimes H\to \mathcal A^{\otimes Y'}\rtimes H$ are flat, \cite[Lemma 11.2]{bartels2022vanishing} implies that $\mathcal A^{\otimes Z}\rtimes H$ is regular coherent. 
		Combined with \cite[Theorem 10.1]{bartels2022vanishing}, this argument even shows that $(\mathcal A^{\otimes Z}\rtimes H)[\Z^m]$ is regular coherent for any $m\geq 0$, and similarly $(\mathcal B^{\otimes Z}\rtimes H)[\Z^m]$. 
		It then follows from \cite[Theorem 7.8]{bartels2022vanishing} together with \eqref{eq:semidirectproductdecom} and \eqref{eq:semidirectproductdecom2} that we only need to prove that the induced map
			\[K_*(\mathcal A^{\otimes Z}\rtimes H)\to K_*(\mathcal B^{\otimes Z}\rtimes H)\]
		is an isomorphism.
		By compatibility of $K$-theory with filtered colimits (see \cite[Corollary 7.2]{Nonconnective}), we may even replace $Z$ by a finite $H$-set $Y$. But now the statement follows from Theorem \ref{lem:virtualEqFiniteGroup}. 
	\end{proof}

	The following theorem is an immediate consequence of \cite[Corollary 1.2, Remark 1.6]{JamesQuinnReich}. 
	\begin{theorem}[\cite{JamesQuinnReich}]\label{thmFJimpliesGoingDown}
		Let $G$ be a group satisfying the $K$-theoretic Farrell--Jones conjecture with coefficients. Let $F\colon \mathcal A\to \mathcal B$ be a $G$-equivariant additive functor between additive categories with $G$-action. Assume that for every finite-by-cyclic\footnote{Recall that a group $V$ is called finite-by-cyclic if there is a short exact sequence $1\to H\to V\to C\to 1$ with $H$ finite and $C$ cyclic.} group $V\subseteq G$, the induced map 
		\[K_*(F\rtimes V)\colon K_*(\mathcal A\rtimes V)\to K_*(\mathcal B\rtimes V)\]
		is an isomorphism. Then the induced map 
		\[K_*(\mathcal A\rtimes G)\to K_*(\mathcal B\rtimes G)\]
		is an isomorphism as well. 
	\end{theorem}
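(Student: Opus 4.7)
The plan is to reduce via the Farrell--Jones conjecture to a statement about equivariant homology, and then to invoke the abstract reduction of \cite[Corollary 1.2, Remark 1.6]{JamesQuinnReich} in order to shrink the family of virtually cyclic subgroups to the family of finite-by-cyclic subgroups, matching the hypothesis.

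Concretely, since $G$ satisfies the $K$-theoretic Farrell--Jones conjecture with coefficients, the assembly map
\[\mu_{\mathcal{A}}\colon H^G_*(\mathcal E_{\mathrm{VCyc}}G,\mathbb K_\mathcal{A})\xrightarrow{\cong} K_*(\mathcal{A}\rtimes G)\]
and its analogue $\mu_\mathcal{B}$ are both isomorphisms. The equivariant additive functor $F$ is natural in the coefficient category and therefore fits into a commutative square with $\mu_\mathcal{A}$ and $\mu_\mathcal{B}$ as its horizontal arrows. It thus suffices to show that $F$ induces an isomorphism on the left-hand sides. A standard cellular induction on $\mathcal E_{\mathrm{VCyc}}G$, combined with the defining property $H^G_n(G/V,\mathbb K_\mathcal{C})\cong K_n(\mathcal{C}\rtimes V)$ of the equivariant homology theory and a Mayer--Vietoris argument, further reduces this to showing that $K_*(F\rtimes V)$ is an isomorphism for each cell stabilizer $V$, which here means every virtually cyclic subgroup of $G$.

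The hypothesis, however, only supplies such isomorphisms for the smaller class of finite-by-cyclic subgroups, so the main obstacle is to bridge this gap. This is precisely what \cite[Corollary 1.2]{JamesQuinnReich} provides: the relative assembly map associated to the inclusion of the family of finite-by-cyclic subgroups into the family of virtually cyclic subgroups is always an isomorphism in $K$-theory. Applying this fact to both coefficient categories $\mathcal{A}$ and $\mathcal{B}$, I may replace $\mathcal E_{\mathrm{VCyc}}G$ throughout the commutative square by the classifying space for the family of finite-by-cyclic subgroups without altering the groups in question. Once this replacement is made, the cellular induction only calls upon the values of $F$ at finite-by-cyclic stabilizers, and the hypothesis closes the argument. \cite[Remark 1.6]{JamesQuinnReich} is cited to ensure that this reduction is valid in the generality of additive coefficient categories rather than only coefficient rings.
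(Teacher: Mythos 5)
Your proposal is correct and follows essentially the same route as the paper, which simply cites \cite[Corollary 1.2, Remark 1.6]{JamesQuinnReich} to reduce the family from virtually cyclic to finite-by-cyclic subgroups and otherwise relies on the standard naturality-of-assembly/Going-Down argument that you spell out. The details you add (the commutative square of assembly maps and the cellular induction over stabilizers) are exactly the standard justification the paper leaves implicit.
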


\begin{corollary}\label{cor:grouprings}
	Let $G$ be a group satisfying the $K$-theoretic Farrell--Jones conjecture with coefficients, let $H$ be a finite group, and let $Z$ be a $G$-set. Assume that $R$ is regular and that the orders of $H$ and of all finite subgroups of $G$ are invertible in $R$. Denote by 	
	\[F\coloneqq \Ind_\iota\oplus \Ind_j\colon  \mathcal F^p(R)\oplus \mathcal F^p(I_R[H])\to \mathcal F^p(R[H])\]
	the virtual equivalence from Example \ref{eg:finite group ring}, viewed as a based virtual equivalence with respect to the rank one modules $R\in \mathcal F^p(R), R[H]\in \mathcal F^p(R[H])$. Then the induced map 
		\[K_*(F^{\otimes Z}\rtimes G)\colon K_*((\mathcal F^p(R)\oplus \mathcal F^p(I_R[H]))^{\otimes Z}\rtimes G)\to K_*(\mathcal F^p(R[H])^{\otimes Z}\rtimes G)\]
	is an isomorphism.
\end{corollary}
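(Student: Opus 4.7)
The plan is to invoke the Going-Down principle, Theorem \ref{thmFJimpliesGoingDown}, applied to the $G$-equivariant functor $F^{\otimes Z}$. This reduces the problem to showing that $K_*(F^{\otimes Z}\rtimes V)$ is an isomorphism for every finite-by-cyclic subgroup $V\subseteq G$. Any such $V$ is either finite or isomorphic to $H'\rtimes_\varphi \Z$ for some finite subgroup $H'\subseteq G$ and $\varphi\in \Aut(H')$; in both cases $\frac{1}{|H'|}\in R$ by hypothesis.

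If $V$ is finite, I would write $\mathcal A^{\otimes Z}\rtimes V = \varinjlim_{Y}(\mathcal A^{\otimes Y}\rtimes V)$, with $Y$ ranging over the finite $V$-invariant subsets of $Z$, which are cofinal among all finite subsets of $Z$ since $V$ is finite, and similarly for $\mathcal B$. Since non-connective algebraic $K$-theory commutes with filtered colimits by \cite[Corollary 7.2]{Nonconnective}, the problem reduces to showing that $K_*(F^{\otimes Y}\rtimes V)$ is an isomorphism for each finite $V$-set $Y$; this is precisely Izumi's theorem, Theorem \ref{lem:virtualEqFiniteGroup}, applied to the virtual equivalence $F$.

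If $V\cong H'\rtimes_\varphi \Z$, I would apply Lemma \ref{lem:virtually cyclic} with $H'$ in place of $H$ and $V$ in place of $G$, restricting $Z$ to a $V$-set. Two hypotheses require verification. For the regularity of $\mathcal A^{\otimes n}$ and $\mathcal B^{\otimes n}$ for every $n\geq 1$: the ring isomorphism $R[H]\cong R\times I_R[H]$ induces an equivalence $\mathcal A\simeq \mathcal F^p(R[H])$, and iterated use of Lemma \ref{lem:idempotentcomplete} identifies both $\mathcal A^{\otimes n}$ and $\mathcal B^{\otimes n}$, up to idempotent completion, with $\mathcal F^p(R[H^n])\simeq \mathcal F^p(R)\rtimes H^n$; the latter is regular by Lemma \ref{lem:regular} since $\mathcal F^p(R)$ is regular (as $R$ is) and $|H^n|=|H|^n$ is invertible in $R$. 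For the flatness of the connecting functors $\mathcal A^{\otimes Y}\rtimes H'\to \mathcal A^{\otimes Y'}\rtimes H'$ and $\mathcal B^{\otimes Y}\rtimes H'\to \mathcal B^{\otimes Y'}\rtimes H'$ for finite $H'$-invariant $Y\subseteq Y'\subseteq Z$: under the same identifications, these functors correspond to induction along ring maps between tensor-power rings of the form $R[H^?]\rtimes H'$, and the resulting bimodules through which one inducts turn out to be finitely generated projective (in fact free of rank one) over the source, yielding exactness.

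The main obstacle is the second case, and within it the careful bookkeeping to match the categorical constructions to their ring-theoretic counterparts. On the $\mathcal A$-side the base object $R\in \mathcal F^p(R)$ corresponds to $R[H]\cdot e\cong R$ for the central averaging idempotent $e=\frac{1}{|H|}\sum_{h\in H} h\in R[H]$, so the inclusions $\mathcal A^{\otimes Y}\hookrightarrow \mathcal A^{\otimes Y'}$ translate into induction along non-unital ring maps $x\mapsto x\cdot e^{\otimes Y'\setminus Y}$; on the $\mathcal B$-side the inclusion is the standard unital one. In both cases, using that $Y'\setminus Y$ is $H'$-invariant and that $e\cdot R[H]\cong R$, the inducing bimodule is free of rank one over the source, giving flatness. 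Once these identifications are made, regularity follows from Maschke's theorem via Lemma \ref{lem:regular}, flatness follows as above, and the hypotheses of Lemma \ref{lem:virtually cyclic} are satisfied, completing the proof.
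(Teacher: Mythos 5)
Your proposal is correct and follows essentially the same route as the paper: reduce via Theorem \ref{thmFJimpliesGoingDown} to finite and finite-by-$\Z$ subgroups, handle the finite case by Theorem \ref{lem:virtualEqFiniteGroup} (together with continuity of $K$-theory under filtered colimits), and verify the regularity and flatness hypotheses of Lemma \ref{lem:virtually cyclic} using the equivalence $\mathcal F^p(R[H])\simeq \mathcal F^p(R)\oplus\mathcal F^p(I_R[H])$ together with Lemmas \ref{lem:idempotentcomplete} and \ref{lem:regular}, exactly as the paper does. The only quibble is that on the $\mathcal B$-side the inducing bimodule is free of rank $|H|^{|Y'\setminus Y|}$ rather than of rank one, which is immaterial since finitely generated free suffices for flatness.
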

\begin{proof}
	By Theorem \ref{thmFJimpliesGoingDown} we may assume that $G$ is either finite or a semi-direct product $K\rtimes \Z$ for a finite group $K$. In the first case, the corollary follows from Theorem \ref{lem:virtualEqFiniteGroup}. 
	We now treat the second case. 
	By Lemma \ref{lem:idempotentcomplete}, we have an equivalence of categories
		\[\Idem(\mathcal F^p(R[H])\otimes \mathcal F^p(R[H]))\simeq \mathcal F^p(R[H]\otimes R[H])\simeq \mathcal F^p(R[H\times H]).\]
	This category is regular by Lemma \ref{lem:regular} and thus $\mathcal F^p(R[H])\otimes \mathcal F^p(R[H])$ is regular by \cite[Lemma 6.4]{bartels2022vanishing}. By induction, $\mathcal F^p(R[H])^{\otimes n}$ is regular for every $n\geq 1$. 
	Note that the direct sum decomposition $R[H]\cong R\oplus I_R[H]$ induces an equivalence\footnote{This equivalence is different from the \emph{virtual} equivalence $F$.} of categories $\mathcal F^p(R[H])\simeq \mathcal F^p(R)\oplus \mathcal F^p(I_R[H])$. In particular, $(\mathcal F^p(R)\oplus \mathcal F^p(I_R[H]))^{\otimes n}\simeq \mathcal F^p(R[H])^{\otimes n}$ is regular for every $n\geq 1$ as well.
	Moreover, the functors 
		\begin{align*}
			\mathcal F^p(R[H])^{\otimes Y}\rtimes K&\to \mathcal F^p(R[H])^{\otimes Y'}\rtimes K\\
			(\mathcal F^p(R)\oplus \mathcal F^p(I_R[H]))^{\otimes Y}\rtimes K&\to (\mathcal F^p(R)\oplus \mathcal F^p(I_R[H]))^{\otimes Y'}\rtimes K
		\end{align*}
	induced by inclusions of finite $K$-invariant subsets $Y\subseteq Y'\subseteq Z$ are clearly flat inclusions since they are given by taking tensor products with a fixed finitely generated projective module.
	Now the corollary follows from Lemma \ref{lem:virtually cyclic}. 
\end{proof}

	
	\begin{lemma}\label{lem:Green}
		Let $G$ be a group, let $Z$ be a $G$-set and let $\mathcal A=\bigoplus_{z\in Z}\mathcal A_z$ be an $R$-linear category with $G$-action $\alpha\colon G\curvearrowright \mathcal A$ such that for every $g\in G$ and $z\in Z$, $\alpha$ restricts to an isomorphism $\alpha_g\colon \mathcal A_z\xrightarrow{\simeq} \mathcal A_{gz}$. Then we have an equivalence of $R$-linear categories
		\[\mathcal A\rtimes G\simeq \bigoplus_{[z]\in G\backslash Z}\mathcal A_z\rtimes G_z\]
		where $G_z$ denotes the stabilizer of $z\in Z$. 
	\end{lemma}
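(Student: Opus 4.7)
The plan is to decompose $\mathcal A \rtimes G$ orbit-by-orbit and then identify each orbit's contribution with the corresponding stabilizer crossed product. First I would write $Z = \bigsqcup_{[z] \in G \backslash Z} G\cdot z$ and, for each orbit $G\cdot z$, introduce the full subcategory
\[\mathcal A_{G\cdot z} \coloneqq \bigoplus_{z' \in G\cdot z} \mathcal A_{z'} \subseteq \mathcal A,\]
which is $G$-invariant by the hypothesis that $\alpha_g$ restricts to $\mathcal A_z \xrightarrow{\simeq} \mathcal A_{gz}$. Because morphisms between objects supported on different orbits vanish in $\mathcal A$, the same holds in $\mathcal A \rtimes G$ (a nonzero component $f_g \colon \alpha_g(A) \to B$ forces $\alpha_g(A)$ and $B$ to sit in the same $\mathcal A_{z'}$, and hence in the same orbit). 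This yields the first reduction
\[\mathcal A \rtimes G \simeq \bigoplus_{[z]\in G\backslash Z} \mathcal A_{G\cdot z} \rtimes G.\]

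Next, fix a representative $z$ of each orbit and define an $R$-linear functor
\[\Phi_z \colon \mathcal A_z \rtimes G_z \to \mathcal A_{G\cdot z} \rtimes G\]
which is the inclusion on objects and sends a morphism $\sum_{h \in G_z} f_h h$ to the same formal sum, now regarded as an element of $\mathcal A_{G\cdot z} \rtimes G$ (this is legal since $G_z \subseteq G$ and $\alpha_h(A) \in \mathcal A_z$ for $h \in G_z$, $A \in \mathcal A_z$). Compatibility with composition follows from the formula $(fg)\circ(f'g') = (f \circ \alpha_g(f'))gg'$ together with the fact that $G_z$ is closed under multiplication. I would verify full faithfulness by noting that for $A, B \in \mathcal A_z$ a component $f_g \colon \alpha_g(A) \to B$ in $\Hom_{\mathcal A_{G\cdot z} \rtimes G}(A,B)$ can only be nonzero when $\alpha_g(A) \in \mathcal A_z$, i.e.\ when $gz = z$, i.e.\ when $g \in G_z$.

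For essential surjectivity, given any object $A' \in \mathcal A_{gz}$ in the orbit, the object $A \coloneqq \alpha_{g^{-1}}(A') \in \mathcal A_z$ is isomorphic to $A'$ inside $\mathcal A_{G\cdot z} \rtimes G$ via the mutually inverse morphisms $\id_{A'} \cdot g \colon A \to A'$ and $\id_A \cdot g^{-1} \colon A' \to A$, whose composites compute to $\id_A$ and $\id_{A'}$ using the multiplication rule above. Combining these equivalences $\Phi_z$ across all orbits yields the claimed decomposition.

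I do not expect a genuinely hard step; the proof is essentially bookkeeping once the orbit decomposition is in place. The one place to be a little careful is the full faithfulness argument, which is where the hypothesis that $\mathcal A$ splits as a direct sum indexed by $Z$ (so that $\Hom_{\mathcal A}(\alpha_g(A), B) = 0$ whenever $\alpha_g(A)$ and $B$ lie in different $\mathcal A_{z'}$) is actually used; without this hypothesis the morphism sets of $\mathcal A_{G\cdot z} \rtimes G$ would pick up extra contributions and the stabilizer description would fail.
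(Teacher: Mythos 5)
Your proposal is correct and follows essentially the same route as the paper: decompose over the orbit space $G\backslash Z$, then show the natural inclusion $\mathcal A_z\rtimes G_z\to \mathcal A_{G\cdot z}\rtimes G$ is fully faithful (using that morphisms between different summands $\mathcal A_{z'}$ vanish) and essentially surjective (by transitivity, via the explicit isomorphisms $\id_{A'}\cdot g$ and $\id_A\cdot g^{-1}$). The paper's proof is just a terser version of exactly this argument, leaving your verifications to "by construction."
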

	\begin{proof}
		By decomposing both sides as direct sums over the orbit space $G\backslash Z$, we may assume that $Z$ is transitive. Pick $z_0\in Z$. Then the natural inclusion functor 
		\[\mathcal A_{z_0}\rtimes G_{z_0}\to \mathcal A\rtimes G_{z_0}\to \mathcal A\rtimes G\]
		is fully faithful by construction. It is essentially surjective since $Z$ is transitive. 
	\end{proof}
	
	\begin{theorem}\label{thm:wreathproducts}
		Let $G$ be a group satisfying the $K$-theoretic Farrell--Jones conjecture with coefficients and let $H$ be a finite group. Assume that $R$ is regular and that the orders of $H$ and of every finite subgroup of $G$ are invertible in $R$. 
		Then we have an isomorphism 
		\[K_*(R[H\wr G])\cong \bigoplus_{[F]\in G\backslash\mathrm{FIN}(G)}K_*(I_R[H]^{\otimes F}\rtimes G_F),\]
		where $\mathrm{FIN}(G)$ denotes the set of finite subsets of $G$. 
	\end{theorem}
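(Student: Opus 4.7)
The plan is to reduce the theorem to Corollary \ref{cor:grouprings} via a standard identification of the group ring, then decompose the resulting coefficient category using distributivity of tensor products over direct sums, and conclude via Lemma \ref{lem:Green}. First, since $H\wr G = H^{(G)}\rtimes G$, the group ring factors as $R[H\wr G]\cong R[H]^{\otimes G}\rtimes G$, where the infinite tensor product of unital $R$-algebras is based at $1\in R[H]$ and $G$ acts by shifting tensor factors. Combined with the Example identifying $(\mathcal{F}^p(S)^{\otimes Z})^f$ with $\mathcal{F}^p(S^{\otimes Z})$ and Lemma \ref{lem:idempotentcomplete}, this gives $K_*(R[H\wr G])\cong K_*(\mathcal{F}^p(R[H])^{\otimes G}\rtimes G)$. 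Applying Corollary \ref{cor:grouprings} with $Z=G$ and the based virtual equivalence of Example \ref{eg:finite group ring} then yields an isomorphism with $K_*\bigl((\mathcal{F}^p(R)\oplus \mathcal{F}^p(I_R[H]))^{\otimes G}\rtimes G\bigr)$.

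The key step is to decompose this coefficient category along $\mathrm{FIN}(G)$. At each finite stage $S\Subset G$, distributivity of the tensor product over direct sums, together with the fact that tensoring by $\mathcal{F}^p(R)$ is (after additive completion) the identity, gives
\[\bigl((\mathcal{F}^p(R)\oplus \mathcal{F}^p(I_R[H]))^{\otimes S}\bigr)^f \simeq \Bigl(\bigoplus_{F\subseteq S}\mathcal{F}^p(I_R[H])^{\otimes F}\Bigr)^f.\]
The connecting functors of the filtered colimit defining $(\mathcal{F}^p(R)\oplus \mathcal{F}^p(I_R[H]))^{\otimes G}$ tensor new positions with the base point $R$, so they preserve these decompositions and send the $F$-summand at stage $S$ to the $F$-summand at stage $S'\supseteq S$. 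Passing to the colimit, which commutes with direct sums, and using Lemma \ref{lem:idempotentcomplete} to move additive completion outside, produces a $G$-equivariant equivalence
\[\bigl((\mathcal{F}^p(R)\oplus \mathcal{F}^p(I_R[H]))^{\otimes G}\bigr)^f \simeq \Bigl(\bigoplus_{F\in \mathrm{FIN}(G)} \mathcal{F}^p(I_R[H])^{\otimes F}\Bigr)^f,\]
where $G$ acts on the right-hand side by combining translation on $\mathrm{FIN}(G)$ with the tensor-shift isomorphisms $\mathcal{F}^p(I_R[H])^{\otimes F}\xrightarrow{\simeq}\mathcal{F}^p(I_R[H])^{\otimes gF}$.

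Finally, Lemma \ref{lem:Green} applied to this $G$-equivariant decomposition gives
\[\Bigl(\bigoplus_{F\in \mathrm{FIN}(G)} \mathcal{F}^p(I_R[H])^{\otimes F}\Bigr)\rtimes G \simeq \bigoplus_{[F]\in G\backslash \mathrm{FIN}(G)} \mathcal{F}^p(I_R[H])^{\otimes F}\rtimes G_F,\]
and applying $K_*$, using its additivity on direct sums together with the identification $K_*(\mathcal{F}^p(I_R[H])^{\otimes F}\rtimes G_F)=K_*(I_R[H]^{\otimes F}\rtimes G_F)$ provided by the Example and Lemma \ref{lem:idempotentcomplete}, assembles the chain of isomorphisms into the claimed formula. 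I expect the main obstacle to be the decomposition step: while the distributive identification at each finite stage is routine, one must carefully verify that it is compatible both with the connecting functors in the filtered colimit and with the $G$-action on $\mathrm{FIN}(G)$, in order to obtain a genuinely $G$-equivariant decomposition to which Lemma \ref{lem:Green} can be applied.
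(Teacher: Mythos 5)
Your proposal is correct and follows essentially the same route as the paper's proof: identify $R[H\wr G]$ with $R[H]^{\otimes G}\rtimes G$, replace the coefficients by $(\mathcal F^p(R)\oplus \mathcal F^p(I_R[H]))^{\otimes G}$ via Corollary \ref{cor:grouprings}, decompose equivariantly via the binomial formula, and finish with Lemma \ref{lem:Green}, Lemma \ref{lem:idempotentcomplete}, and additivity of $K$-theory. Your extra care about compatibility of the finite-stage decompositions with the connecting functors and the $G$-action is exactly the content the paper compresses into the phrase ``c.f.\ the binomial formula.''
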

	\begin{proof}
		Note that there is a canonical isomorphism 
			\[R[H\wr G]\cong R[H]^{\otimes G}\rtimes G\]
		of $R$-algebras. Using Lemma \ref{lem:idempotentcomplete}, we obtain an equivalence 
			\begin{equation}\label{eq:wreath=Bernoulli}
			\mathcal F^p(R[H\wr G])\simeq \Idem(\mathcal F^p(R[H])^{\otimes G}\rtimes G).
			\end{equation}
		Equipping $\mathcal F^p(R)\oplus \mathcal F^p(I_R[H])$ with the rank one module $R\in \mathcal F^p(R)$ as a base point, we have an equivariant equivalence (c.f. the binomial formula)
		\[(\mathcal F^f(R)\oplus \mathcal F^f(I_R[H]))^{\otimes G}\simeq \bigoplus_{F\in \FIN(G)}\mathcal F^f(I_R[H])^{\otimes F}.\]
		By Lemmas \ref{lem:idempotentcomplete} and \ref{lem:Green}, this induces an equivalence 
			\begin{equation}\label{eq:distributive}
			\begin{aligned}
				&\Idem\left((\mathcal F^p(R)\oplus \mathcal F^p(I_R[H]))^{\otimes G}\rtimes G\right)\\
				 \simeq &\Idem\left(\left(\bigoplus_{F\in \FIN(G)}\mathcal F^p(I_R[H])^{\otimes F}\right)\rtimes G\right)\\
				\simeq &\Idem\left(\bigoplus_{F\in G\backslash\FIN(G)}\mathcal F^p(I_R[H])^{\otimes F}\rtimes G_F\right).
			\end{aligned}
			\end{equation}
		Combining Corollary \ref{cor:grouprings} with \eqref{eq:wreath=Bernoulli} and \eqref{eq:distributive}, we obtain the desired isomorphism 
		\[K_*(R[H\wr G])\cong \bigoplus_{[F]\in G\backslash\mathrm{FIN}(G)}K_*(I_R[H]^{\otimes F}\rtimes G_F).\]
	\end{proof}

	\begin{remark}\label{rem:wreathproducts} Theorem \ref{thm:wreathproducts} generalizes to an isomorphism
		\[K_*(A[H\wr G])\cong \bigoplus_{[F]\in G\backslash\mathrm{FIN}(G)}K_*(A\otimes I_R[H]^{\otimes F}\rtimes G_F),\]
	for any regular (noncommutative) $R$-algebra $A$ with unit such that the orders of $H$ and of every finite subgroup of $G$ are invertible in $A$. Here, without loss of generality, we may let $R$ be a suitable localization $\mathbb{Z}[S^{-1}]$ of $\mathbb{Z}$ where $S$ contains the orders of $H$ and of every finite subgroup of $G$. Indeed, under these assumptions, the proof of Corollary \ref{cor:grouprings} generalizes to show that $K_*(\id_A\otimes F^{\otimes Z}\rtimes G)$ induces an isomorphism
	\[ K_*(A\otimes (\mathcal F^p(R)\oplus \mathcal F^p(I_R[H]))^{\otimes Z}\rtimes G)\cong K_*(A\otimes \mathcal F^p(R[H])^{\otimes Z}\rtimes G).\]
The right-hand side is naturally identified as $K_*(A[H\wr G])$, and the left-hand side decomposes as in the proof of Theorem \ref{thm:wreathproducts}.
	\end{remark}

	For algebraically closed fields of characteristic zero, we obtain a much more concrete $K$-theory formula analogous to the results in \cite{XinLi}.
	\begin{corollary}\label{cor:complexgroupring}
		Let $G$ be a group satisfying the $K$-theoretic Farrell--Jones conjecture with coefficients and let $H$ be a finite group. 
		Assume that $R$ is an algebraically closed field of characteristic zero.
		Then we have an isomorphism 
		\begin{align*}
			K_*(R[H\wr G])
			&\cong \bigoplus_{[F]\in G\backslash \FIN(G)} \bigoplus_{[S] \in G_F\backslash (\{1, \ldots, n\}^F) } K_*(R[G_S])\\
			&\cong  K_*(R[G]) \oplus \bigoplus_{[C]\in \mathcal C} \bigoplus_{[X] \in N_C \backslash F(C) }  \bigoplus_{[S] \in C \backslash \{1, \ldots, n\}^{C\cdot X} } K_\ast(R[C_S]).			
		\end{align*}
		Here, $n$ denotes the number of non-trivial conjugacy classes of $H$, $\mathcal C$ denotes the set of all conjugacy classes of finite subgroups of $G$, $F(C)$  the nonempty finite subsets of $C\backslash G$ which are not of the form $\pi^{-1}(Y)$ for a finite subgroup $D\subseteq G$ with $C\subsetneq D$ and $Y\subseteq D\backslash G$ where $\pi\colon C\backslash G\to D\backslash G$ denotes the projection, $N_C=\{g\in G: gCg^{-1}=C\}$ the normalizer of $C$ in $G$, and $C_S=G_S\cap C$ the stabilizer of $S$ in $C$.
	\end{corollary}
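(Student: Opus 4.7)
The plan is to invoke Theorem \ref{thm:wreathproducts} and make each summand $K_*(I_R[H]^{\otimes F}\rtimes G_F)$ completely explicit using the very special structure of $R[H]$ available when $R$ is an algebraically closed field of characteristic zero, and then to reindex $G\backslash\FIN(G)$ by stabilizer subgroup to obtain the second displayed formula.

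For the first formula, I would combine Maschke's theorem with the Artin--Wedderburn decomposition to write $R[H]\cong R\times\prod_{i=1}^n M_{d_i}(R)$, where the factor $R$ corresponds to the trivial character and $d_1,\dotsc,d_n$ are the dimensions of the non-trivial irreducible representations of $H$, so that $I_R[H]\cong\prod_{i=1}^n M_{d_i}(R)$. Distributing the $F$-fold tensor product over the product then yields a $G_F$-equivariant $R$-algebra isomorphism
\[I_R[H]^{\otimes F}\cong\prod_{S\in\{1,\dotsc,n\}^F}\,\bigotimes_{f\in F}M_{d_{S(f)}}(R),\]
in which $G_F$ permutes the outer factors via its action on $\{1,\dotsc,n\}^F$ and the stabilizer $(G_F)_S$ acts on the $S$-th inner factor by permuting its tensor factors. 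Translating this decomposition through $\mathcal F^p$ turns it into a direct-sum decomposition of $G_F$-equivariant additive categories, and Lemma \ref{lem:Green} then collapses the crossed product into
\[K_*(I_R[H]^{\otimes F}\rtimes G_F)\cong\bigoplus_{[S]\in G_F\backslash\{1,\dotsc,n\}^F}K_*\!\Bigl(\bigotimes_{f\in F}M_{d_{S(f)}}(R)\rtimes(G_F)_S\Bigr).\]
On each inner summand the $(G_F)_S$-action on the matrix algebra is implemented by conjugation by the permutation matrices acting on the standard Morita bimodule $\bigotimes_{f\in F}R^{d_{S(f)}}$; since this implementation is a genuine group homomorphism (the symmetric group on tensor factors, not merely a projective representation), equivariant Morita equivalence identifies the above $K$-theory with $K_*(R[(G_F)_S])=K_*(R[G_S])$, producing the first displayed isomorphism.

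For the second formula I would reindex $G\backslash\FIN(G)$ according to the stabilizer subgroup. The empty set contributes the summand $K_*(R[G])$ since $G_\emptyset=G$ and $\{1,\dotsc,n\}^\emptyset$ is a singleton. For a nonempty $F$ the stabilizer $C\coloneqq G_F$ lies in $FF^{-1}$ and is therefore a finite subgroup of $G$; since $CF=F$, one can write $F=\pi^{-1}(X)$ for the nonempty finite subset $X\coloneqq F/C\subseteq C\backslash G$, and the condition that $C$ is exactly the stabilizer (not a proper subgroup of some larger stabilizer $D\supsetneq C$) translates precisely to $X\in F(C)$. Orbit-stabilizer then places nonempty $G$-orbits on $\FIN(G)$ in bijection with pairs $([C],[X])$ where $[C]\in\mathcal C$ and $[X]\in N_C\backslash F(C)$; under this bijection $F=C\cdot X$ and $(G_F)_S=C_S$, which converts the first formula into the second. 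The main technical step is the equivariant Morita identification in the middle paragraph; everything else is algebraic bookkeeping plus the orbit classification.
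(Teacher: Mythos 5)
Your argument is correct, but it reaches the first isomorphism by a different route than the paper. The paper does not run Artin--Wedderburn on $I_R[H]$ at all: it uses the single (non-equivariant) Morita equivalence $\mathcal F^p(I_R[H])\simeq\mathcal F^p(R^{\oplus n})$, which becomes $G_F$-equivariant automatically after taking the $F$-fold tensor power (the action only permutes factors), and then applies Lemma \ref{lem:idempotentcomplete} and Lemma \ref{lem:Green} to land directly on $\bigoplus_{[S]}K_*(R[G_S])$. You instead decompose $I_R[H]^{\otimes F}\cong\prod_{S}\bigotimes_{f\in F}M_{d_{S(f)}}(R)$ first, apply Lemma \ref{lem:Green}, and are then left with crossed products of matrix algebras by $(G_F)_S$, which you untwist using the fact that the permutation action is implemented by a genuine (indeed permutation) representation on $\bigotimes_f R^{d_{S(f)}}$; this step is correct and can even be phrased inside the paper's toolkit via Lemma \ref{lem:SwanActionAndUHF} with $X=\prod_f\{1,\dotsc,d_{S(f)}\}$, but it is an extra equivariant-Morita argument that the paper's order of operations avoids entirely. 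So the paper's proof is shorter and defers the dimensions $d_i$ completely, while yours is more explicit about where the Morita identifications happen. For the second isomorphism the paper simply cites the combinatorial argument of Xin Li (Proposition 2.4 there); your orbit--stabilizer reindexing (empty set gives $K_*(R[G])$; for $F\neq\emptyset$ the stabilizer $C=G_F\subseteq FF^{-1}$ is finite, $F=\pi^{-1}(X)$ with $X$ the image of $F$ in $C\backslash G$, maximality of the stabilizer is exactly $X\in F(C)$, and $G$-orbits correspond to pairs $([C],[X])$ with $[X]\in N_C\backslash F(C)$) is a correct sketch of exactly that argument. One cosmetic point: your notation $X\coloneqq F/C$ should be the image of $F$ in $C\backslash G$ (the set of cosets $Cx$ contained in $F$), since the action is by left translation.
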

	\begin{proof}
		Since $R$ is algebraically closed of characteristic zero and since $H$ is finite, we have a Morita-equivalence $I_R[H]\simeq R^{\oplus n}$ and therefore an equivalence of categories 
			\begin{equation}\label{eq:complexaugmentation}
				\mathcal F^p(I_R[H])\cong \mathcal F^p(R^{\oplus n}).
			\end{equation}
		We obtain isomorphisms 
		\begin{align}
			K_*(R[H\wr G])&\cong \bigoplus_{[F]\in G\backslash\mathrm{FIN}(G)}K_*(I_R[H]^{\otimes F}\rtimes G_F) \label{eq:complex1} \\ 
							&\cong \bigoplus_{[F]\in G\backslash\mathrm{FIN}(G)}K_*((R^{\oplus n})^{\otimes F}\rtimes G_F) \label{eq:complex2} \\
							&\cong \bigoplus_{[F]\in G\backslash \FIN(G)} \bigoplus_{[S] \in G_F\backslash (\{1, \ldots, n\}^F) } K_*(R[G_S]). \label{eq:complex3}
		\end{align}
	Here we have used Theorem \ref{thm:wreathproducts} in \eqref{eq:complex1}, \eqref{eq:complexaugmentation} and Lemma \ref{lem:idempotentcomplete} in \eqref{eq:complex2}, and Lemma \ref{lem:Green} in \eqref{eq:complex3}. 
	This proves the first isomorphism of the corollary. The second isomorphism can be derived using the same combinatorial arguments as in \cite[Proposition 2.4]{XinLi}.
	\end{proof}
	
	\section{Bernoulli shifts on semi-simple algebras}
	This section is devoted to the computation of $K_*((\mathcal A\otimes {M_n^{\otimes Z}})\rtimes G)$ where $G$ is a group, $\mathcal A$ is an $R$-linear category with $G$-action, $Z$ is a $G$-set and $M_n$ denotes the $R$-algebra of $n\times n$-matrices over $R$ for some $n\geq 1$. We use this to calculate the $K$-theory of $A^{\otimes Z}\rtimes G$ where $A=M_{k_0}\oplus \dotsb\oplus M_{k_n}$ is a semi-simple $R$-algebra that decomposes into a finite sum of matrix algebras. 
	This is mainly done by analysing the $A(G)$-module structure of $K_*(\mathcal A\rtimes G)$ where $A(G)$ is the \emph{Burnside ring} of $G$, given by the Grothendieck group of the monoid of isomorphism classes of finite $G$-sets. The addition in $A(G)$ is given by disjoint union and the multiplication is given by cartesian product.
	The \emph{Swan ring} $\Sw{G}$ of $G$ is the $K_0$-group of the exact category of $\Z[G]$-modules that are finitely generated and free as $\Z$-modules, equipped with the tensor product as multiplication. Explicitely, $\Sw{G}$ is generated by finitely generated $\Z$-free $\Z[G]$-modules with the relation $[X] + [Z] = [Y]$ whenever $0\to X\to Y\to Z\to 0$ is a short exact sequence. 
	There is a canonical homomorphism $A(G)\to \Sw{G}$ given by $X\mapsto \Z[X]$. 
	\begin{lemma}[{\cite[Lemma 9.1]{bartels2007coefficients}}]\label{lem:Swan}
		Let $G$ be a group and let $\mathcal A$ be an additive category with $G$-action. Then the algebraic $K$-theory $K_*(\mathcal A\rtimes G)$ is a module over $\Sw{G}$ in a natural way. In particular, $K_*(\mathcal A\rtimes G)$ is a module over $A(G)$. 
	\end{lemma}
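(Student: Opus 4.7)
My plan is to construct, for each finitely generated $\Z$-free $\Z[G]$-module $M$, an additive endofunctor $T_M$ of $\mathcal A\rtimes G$; to show that the assignment $M\mapsto T_M$ sends direct sums to direct sums and tensor products to compositions; and finally to verify the Swan ring relation $[T_Y]=[T_X]+[T_Z]$ at the level of $K$-theory for each short exact sequence $0\to X\to Y\to Z\to 0$ of such modules. The $A(G)$-module structure will then follow by pulling back along the canonical ring homomorphism $A(G)\to\Sw{G}$, $X\mapsto\Z[X]$.

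For the construction, fix a $\Z$-basis $e_1,\dotsc,e_n$ of $M$ and let $\rho\colon G\to \GL_n(\Z)$ denote the associated matrix representation. Set $T_M(A):=A^{\oplus n}$ on objects, and on a generating morphism $f_g\cdot g\colon A\to B$ in $\mathcal A\rtimes G$ (with $f_g\colon \alpha_g(A)\to B$) define $T_M(f_g\cdot g)\colon A^{\oplus n}\to B^{\oplus n}$ to be the $n\times n$ matrix in $\mathcal A\rtimes G$ whose $(i,j)$-entry is $\rho(g)_{ij}\cdot f_g\cdot g$. A direct calculation using the multiplicativity of $\rho$ shows that $T_M$ respects composition and is additive, and a change-of-basis argument shows it is well-defined up to natural equivalence. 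A $\Z[G]$-direct-sum decomposition $M=M_1\oplus M_2$ makes $\rho(g)$ block diagonal, giving $T_M\cong T_{M_1}\oplus T_{M_2}$, and a Kronecker product computation yields $T_{M\otimes_\Z N}\cong T_M\circ T_N$, so that the assignment $M\mapsto K_*(T_M)$ is additive and multiplicative.

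The subtle step is to descend from isomorphism classes to the Grothendieck group, i.e. to verify the relation coming from a short exact sequence $0\to X\to Y\to Z\to 0$, since such a sequence need not split over $\Z[G]$ and hence $T_Y$ need not be isomorphic to $T_X\oplus T_Z$ as a functor. Choosing a $\Z$-linear splitting identifies $T_Y(A)\cong T_X(A)\oplus T_Z(A)$ at the level of objects, but relative to this decomposition each image $T_Y(f_g\cdot g)$ becomes block upper triangular with diagonal blocks $T_X(f_g\cdot g)$ and $T_Z(f_g\cdot g)$ and off-diagonal contributions controlled by the cocycle $G\to \Hom_\Z(Z,X)$ measuring the failure of equivariance of the splitting. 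I expect to finish by invoking the additivity theorem in algebraic $K$-theory, which guarantees that a functor with such an upper triangular structure induces on $K$-theory the sum of the $K$-theory maps of its diagonal blocks; this is the main obstacle, and it is the only step that uses nontrivial input beyond bookkeeping. Naturality in $\mathcal A$ is then immediate since the construction depends only on formal operations preserved by any equivariant additive functor.
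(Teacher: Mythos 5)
Your construction is exactly the one the paper sketches (the functor $A\mapsto A\otimes_\Z M$, morphisms sent to $\sum_g (f_g\otimes_\Z l_g)g$), with the remaining details deferred there to \cite[Lemma 9.1]{bartels2007coefficients}; your verification of the Swan relation via a $\Z$-linear splitting, the resulting pointwise split-exact sequence of functors $T_X\Rightarrow T_Y\Rightarrow T_Z$, and the additivity theorem is the standard way to complete that step. So the proposal is correct and takes essentially the same route as the paper.
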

	\begin{proof}
		The main idea is to associate to a $\Z[G]$-module $M\cong\Z^d$ the functor $\mathcal A\rtimes G \to \mathcal A\rtimes G$ which maps an object $A$ to the object $\bigoplus_{i=1}^d A\eqcolon A\otimes_\Z M$ and a morphism $\sum_{g\in G}f_g g$ to the morphism $\sum_{g\in G}(f_g \otimes_\Z l_g) g$ where $l_g\colon M\to M$ denotes the action by $g$. 
		We refer to \cite[Lemma 9.1]{bartels2007coefficients} for the the details.
	\end{proof}
			
	\begin{lemma}\label{lem:SwanActionAndUHF}
		Let $G$ be a group, let $X$ be a finite $G$-set, and let $\mathcal A$ be a $G$-$R$-linear category. Then there is an equivalence of $R$-linear categories 
		\[\Phi\colon \Idem((\mathcal A\otimes \End_R(R[X]))\rtimes G)\simeq \Idem(\mathcal A\rtimes G)\]
		such that the induced map 
		\[K_*(\mathcal A\rtimes G)\xrightarrow{(\id\otimes 1)\rtimes G} K_*((\mathcal A\otimes \End_R(R[X]))\rtimes G)\xrightarrow[\cong]{\Phi}K_*(\mathcal A\rtimes G)\]
		agrees with the action of the element $[X]\in A(G)$ from Lemma \ref{lem:Swan}. 
	\end{lemma}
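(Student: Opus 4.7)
The plan is to fix a basepoint $x_0\in X$ and construct an explicit equivalence $\Psi\colon\Idem(\mathcal A\rtimes G)\xrightarrow{\simeq}\Idem((\mathcal A\otimes\End_R(R[X]))\rtimes G)$, then take $\Phi$ to be any quasi-inverse. Writing $e_{x,y}\in\End_R(R[X])$ for the matrix units relative to the basis $X$ (so $g(e_{x,y})=e_{gx,gy}$ and $\sum_{x\in X}e_{x,x}=\id_{R[X]}$), I would define
\[\Psi(A)\coloneqq(A,\,\id_A\otimes e_{x_0,x_0}),\qquad \Psi\Bigl(\sum_g f_g\,g\Bigr)\coloneqq\sum_g(f_g\otimes e_{x_0,gx_0})\,g.\]
Functoriality of $\Psi$ follows from a direct computation in the crossed product using $e_{x_0,gx_0}\cdot g(e_{x_0,g'x_0})=e_{x_0,gx_0}\cdot e_{gx_0,gg'x_0}=e_{x_0,gg'x_0}$, which reproduces exactly the multiplication of $\mathcal A\rtimes G$.

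Next, I would check that $\Psi$ induces an equivalence on idempotent completions. Sandwiching an arbitrary morphism $\sum_g(f_g\otimes h_g)g$ between the two idempotents yields $\sum_g(f_g\otimes e_{x_0,x_0}h_g e_{gx_0,gx_0})g=\sum_g(f_g\otimes (h_g)_{x_0,gx_0}\,e_{x_0,gx_0})g$, which after absorbing the scalar $(h_g)_{x_0,gx_0}\in R$ into $f_g$ exhibits the natural $R$-linear isomorphism $\Hom(\Psi(A),\Psi(B))\cong\Hom_{\mathcal A\rtimes G}(A,B)$; this gives full faithfulness. For essential surjectivity, any object $A\in(\mathcal A\otimes\End_R(R[X]))\rtimes G$ splits in $\Idem$ as $A\cong\bigoplus_{x\in X}(A,\id_A\otimes e_{x,x})$ via $1_{R[X]}=\sum_x e_{x,x}$, and each summand $(A,\id_A\otimes e_{x,x})$ is isomorphic to $\Psi(A)$ via the mutually inverse morphisms $\id_A\otimes e_{x_0,x}$ and $\id_A\otimes e_{x,x_0}$.

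To verify the second assertion, it suffices to produce a natural isomorphism between $(\id\otimes 1)\rtimes G$ and the composition $\Psi\circ T_{[X]}$, where $T_{[X]}\colon A\mapsto\bigoplus_{x\in X}A$ is the $[X]$-action functor from the proof of Lemma \ref{lem:Swan}; applying $\Phi$ on the left then gives the desired identity on $K$-theory. On objects the isomorphism is obtained by decomposing $(\id\otimes 1)(A)=A\cong\bigoplus_{x\in X}(A,\id_A\otimes e_{x,x})$ and then identifying each summand with $\Psi(A)$ via $\id_A\otimes e_{x_0,x}$. Naturality reduces to the computation that the $(y,x)$-component of $(\id\otimes 1)\bigl(\sum_g f_g g\bigr)$, transported through these isomorphisms, equals $\sum_{g\colon gx=y}f_g g$, which coincides with the $(y,x)$-entry of $\sum_g(f_g\otimes l_g)g$ for the permutation matrix $l_g$ of $g$ acting on $R[X]$ appearing in the proof of Lemma \ref{lem:Swan}. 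The main bookkeeping hurdle is tracking how the idempotents $e_{x_0,gx_0}$ introduced by $\Psi$ interact with the matrix units from $1_{R[X]}=\sum_x e_{x,x}$; the Kronecker delta $\delta_{y,gx}$ cutting down the sum over $g$ emerges from the elementary identity $e_{y,y}\cdot e_{gx,gx_0}=\delta_{y,gx}\,e_{y,gx_0}$.
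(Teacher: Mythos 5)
Your proposal is correct, but it proceeds by a genuinely different route than the paper. The paper obtains $\Phi$ abstractly: it uses the $\End_R(R[X])$-$R$-bimodule $R[X]$ to get a Morita equivalence $\mathcal F^p(\End_R(R[X]))\simeq\mathcal F^p(R)$, which is only equivariant up to coherent natural isomorphisms $T_g$ (in the sense of Bartels--L\"uck), then twists by the $T_g$ to produce an equivalence $(\mathcal A\otimes \mathcal F^p(\End_R(R[X])))\rtimes G\simeq(\mathcal A\otimes\mathcal F^p(R))\rtimes G$, identifies idempotent completions via Lemma \ref{lem:idempotentcomplete}, and finally asserts the compatibility with the $[X]$-action by unwinding the construction in \cite[Lemma 9.1]{bartels2007coefficients}. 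You instead fix a basepoint $x_0\in X$ and write down an explicit fully faithful functor $\Psi$ via the matrix units $e_{x,y}$, with the compression computation $(\id\otimes e_{x_0,x_0})\,c_g\,(\id\otimes e_{gx_0,gx_0})=f_g\otimes e_{x_0,gx_0}$ giving full faithfulness, the orthogonal splitting $1=\sum_x e_{x,x}$ giving density, and a direct matrix-unit computation producing the natural isomorphism $(\id\otimes 1)\rtimes G\cong\Psi\circ T_{[X]}$; this verifies the $K$-theoretic claim without invoking the Bartels--L\"uck equivariance formalism, at the cost of a basepoint choice and more bookkeeping. Two routine points deserve a sentence in a final write-up: essential surjectivity of $\Idem(\Psi)$ onto all of $\Idem(((\mathcal A\otimes\End_R(R[X]))\rtimes G)^f)$ (not just objects of the crossed product itself) follows because the essential image of a fully faithful additive functor out of an idempotent complete category is closed under finite sums and retracts, and every object of the target is a retract of a finite sum of objects $A$, each isomorphic to $\Psi(A)^{\oplus|X|}$; and in the full faithfulness step a general morphism is a finite sum of elementary tensors rather than a single $f_g\otimes h_g$, though the compression argument extends linearly without change. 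Note also that your basepoint forces $X\neq\emptyset$, but this is no extra hypothesis, since the statement (and the paper's Morita argument) already fails for $X=\emptyset$.
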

	\begin{proof}
		The $\End_R(R[X])$-$R$-bimodule $R[X]$ implements an equivalence of categories
			\[F\colon \mathcal F^p(\End_R(R[X]))\to \mathcal F^p(R),\quad M\mapsto M\otimes_{\End_R(R[X])}R[X].\]
		This equivalence is not strictly equivariant, but equivariant in the sense of \cite[Definition 2.1]{BartelsLueck2010}, meaning that there are compatible natural isomorphisms $T_g\colon F\circ \alpha_g \Rightarrow \beta_g \circ F$ for every $g\in G$ where $\alpha\colon G\curvearrowright \mathcal F^p(\End_R(R[X]))$ and $\beta\colon G\curvearrowright \mathcal F^p(R)$ are the natural actions. Concretely, $T_g$ is given by 
			\[T_g \colon \alpha_g(M)\otimes_{\End_R(R[X])}R[X]\to \beta_g(M\otimes_{\End_R(R[X])}R[X]),\quad a\otimes b\mapsto a\otimes \gamma_{g^{-1}}(b),\]
		where $\gamma\colon G\curvearrowright R[X]$ denotes the natural action. 
		We obtain an equivalence of categories 
		\[\Psi\colon (\mathcal A\otimes \mathcal F^p(\End_R(R[X])))\rtimes G\xrightarrow{\simeq} (\mathcal A\otimes \mathcal F^p(R))\rtimes G\]
		given by $A\otimes M\mapsto A\otimes F(M)$ on objects and by \[\sum_{g\in G}f_g g\mapsto \sum_{g\in G}(\id_\mathcal A \otimes F)(f_g)\circ (\id_\mathcal A \otimes T_g^{-1}) g\] on morphisms. 
	
		Using Lemma \ref{lem:idempotentcomplete}, we can identify $\Idem(\Psi)$ with an equivalence 
		\[\Phi\colon \Idem((\mathcal A\otimes \End_R(R[X]))\rtimes G)\xrightarrow{\simeq} \Idem(\mathcal A\rtimes G).\]
		By spelling out the explicit construction of the $\Sw{G}$-action in \cite[Lemma 9.1]{bartels2007coefficients}, it is easy to see that $\Phi$ induces the desired isomorphism on $K$-theory.
	\end{proof}

	The following proposition is the algebraic $K$-theory analogue of \cite[Proposition 2.1]{KN}.
	
	\begin{proposition}\label{prop:kkG}
		Let $G$ be a finite group, let $Z$ be a finite $G$-set and let $n\in \N$ be an integer. Denote by $[n^Z]\in A(G)$ the element represented by the finite $G$-set $\{1,\dotsc,n\}^Z$. Then there are elements $\alpha,\beta\in A(G)$ and integers $l,r\geq 1$ such that 
		\begin{enumerate}
			\item $[n^Z]^l=n\cdot \alpha\in A(G)$,
			\item $[n^Z]\cdot \beta=n^r\in A(G)$. 
		\end{enumerate}
		Furthermore, $\alpha$ can be represented by a $G$-set. 
	\end{proposition}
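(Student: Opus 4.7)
The plan is to identify the elements $y := [n^Z]$ and $n$ as generating the same radical ideal in the Burnside ring $A(G)$. From this the existence of $\alpha,\beta\in A(G)$ and $l,r\geq 1$ with $y^l = n\alpha$ and $y\beta = n^r$ follows formally, and the representability of $\alpha$ by a $G$-set then drops out automatically. Recall the injective mark homomorphism $\phi\colon A(G) \hookrightarrow \prod_{(H)}\Z$, $\phi([X])_H = |X^H|$, where $(H)$ ranges over conjugacy classes of subgroups of $G$. Since an $H$-fixed function $Z\to \{1,\dotsc,n\}$ is constant on each $H$-orbit, we have $\phi(y)_H = n^{|Z/H|}$, where $|Z/H|$ denotes the number of $H$-orbits in $Z$.

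The key algebraic input I would use is Dress's classification of $\mathrm{Spec}(A(G))$: every prime ideal of $A(G)$ is of the form $\mathfrak{q}(H,p) = \{x\in A(G) : p\mid\phi(x)_H\}$ for some subgroup $H\leq G$ and prime $p$, together with the ``characteristic-zero'' primes $\mathfrak{q}(H,0) = \{x:\phi(x)_H = 0\}$. Assuming $Z$ is non-empty, so that $|Z/H|\geq 1$ for all $H$, the integer $\phi(y)_H = n^{|Z/H|}$ is positive and divisible by a prime $p$ if and only if $p\mid n$. Hence $y$ and $n$ lie in exactly the same prime ideals of $A(G)$, namely the $\mathfrak{q}(H,p)$ with $p\mid n$ and $H\leq G$. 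Since $\sqrt{I} = \bigcap_{\mathfrak{p}\supseteq I}\mathfrak{p}$ in any commutative ring, this identification yields $\sqrt{(y)} = \sqrt{(n)}$ in $A(G)$, which is exactly the statement that there exist $l,r\geq 1$ with $y^l\in (n)$ and $n^r\in (y)$; this furnishes the required elements $\alpha$ and $\beta$.

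It remains to upgrade $\alpha$ from a virtual to a genuine $G$-set. Expanding in the orbit basis $\{[G/K]\}_{(K)}$ of $A(G)$, write $y^l = \sum_{(K)} a_K[G/K]$ and $\alpha = \sum_{(K)} c_K[G/K]$ with $a_K,c_K\in \Z$. Because $y^l$ is represented by the genuine $G$-set $(n^Z)^l$, each $a_K$ is a non-negative integer counting orbits of type $G/K$. The identity $n\alpha = y^l$ forces $n c_K = a_K$, so each $c_K = a_K/n$ is a non-negative integer, and consequently $\alpha$ is itself represented by a $G$-set.

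The main conceptual ingredient is Dress's description of the prime spectrum of the Burnside ring; after that, the rest reduces to a short commutative-algebra argument together with the observation that dividing a genuine $G$-set by an integer which already divides it in $A(G)$ automatically preserves positivity in the orbit basis. The only mild caveat is the trivial edge case $Z = \emptyset$, in which $y = 1$ and (1) fails for $n\geq 2$; this is implicitly excluded by the intended application.
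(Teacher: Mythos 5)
Your proof is correct, but it takes a genuinely different route from the paper. The paper argues entirely inside $A(G)$ by elementary means: ordering the orbit basis $([G/H_i])_i$ so that left multiplication by any finite $G$-set becomes upper triangular, reading off the diagonal entries of multiplication by $[n^Z]$ as $n^{|Z/H_i|}$ (the orbit count of $\{1,\dotsc,n\}^Z\times G/H_i$), and deducing via injectivity of the regular representation the explicit polynomial identity $\prod_{i}([n^Z]-n^{|Z/H_i|})=0$ in $A(G)$; the elements $\alpha,\beta$ are then explicit polynomials in $[n^Z]$, with explicit bounds on $l$ and $r$. You instead compute the marks $\phi([n^Z])_H=n^{|Z/H|}$ and invoke Dress's classification of $\mathrm{Spec}(A(G))$ to conclude that $[n^Z]$ and $n$ lie in exactly the same primes, hence $\sqrt{([n^Z])}=\sqrt{((n))}$, which formally yields (1) and (2). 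This is shorter and conceptually transparent, but it imports Dress's theorem as a nontrivial external input and is non-constructive about $l$ and $r$ (harmless for the application); the paper's argument is self-contained, essentially just Cayley--Hamilton for an upper triangular matrix. Your final step upgrading $\alpha$ to a genuine $G$-set by dividing the non-negative orbit coefficients of $[n^Z]^l$ by $n$ is exactly the paper's argument. The caveat you raise about $Z=\emptyset$ (where (1) fails for $n\geq 2$) applies equally to the paper's statement and proof, and is immaterial in the intended application, so it is not a defect of your argument relative to the paper's.
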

	\begin{proof}
		Denote by $\lambda\colon A(G)\to \End_\Q(A(G)\otimes \Q)$ the representation given by left multiplication. 
		Let $(H_i)_{1\leq i\leq k}$ be a collection of subgroups of $G$ containing each conjugacy class exactly once. 
		Then $([G/H_i])_{1\leq j\leq k}$ is a basis for $A(G)$ so that any element $x\in A(G)$ can uniquely be written as $x=\sum_{1\leq i\leq k}n_i [G/H_i]$ with $n_i\in \Z$. 
		By reordering the $H_i$ according to the maximal length of any chain of proper subgroups contained in $H_i$, we may furthermore assume that $i\leq j$ whenever $H_i$ is subconjugate to $H_j$.
		We make the following observations:
		\begin{enumerate}
			\item The coefficients $n_i$ of an element $x=\sum_i n_i [G/H_i]$ may be recovered as the last column of the matrix representing $\lambda(x)$ with respect to the basis $([G/H_i])_{1\leq i \leq k}$ of $A(G)_\Q$. In particular, $\lambda$ is injective. 
			\item Let $X$ be any finite $G$-set and $1\leq i\leq k$. Then the stabilizers of $X\times G/H_i$ are all subconjugate to $H_i$. In particular, the matrix representing $\lambda([X])$ with respect to $([G/H_i])_{1\leq i\leq k}$ is an upper triangular matrix. 
			\item For any $1\leq i\leq k$, the $G$-set $\{1,\dotsc,n\}^Z\times G/H_i$ has $n^{|Z/H_i|}$-many orbits of type $G/H_i$. 
		\end{enumerate}
		The second and the third condition imply 
		\[\prod_{i=1}^k(\lambda([n^Z])-n^{|Z/H_i|})=0\in \End_\Q(A(G)\otimes \Q),\]
		while the first condition implies 
		\[\prod_{i=1}^k([n^Z]-n^{|Z/H_i|})=0\in A(G).\]
		Now the proposition follows by letting $\alpha$ and $\beta$ be suitable polynomials in $[n^Z]$. The equality $[n^Z]^l=n\cdot \alpha$ implies that all the coefficients of $\alpha$ are positive. In particular $\alpha$ can be represented by a $G$-set. 
	\end{proof}

	\begin{lemma}\label{lem:ssa}
		Let $\mathcal A$ be an $R$-linear category. Then the canonical inclusion 
		\[\id\otimes 1\colon \mathcal A\otimes M_n^{\otimes \N}\to \mathcal A\otimes M_n^{\otimes \N}\otimes M_n^{\otimes \N}\]
		induces an isomorphism on $K$-theory. 
	\end{lemma}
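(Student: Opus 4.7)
The plan is to reduce the statement to the invertibility of multiplication by $n$ on $K_\ast(\mathcal A \otimes M_n^{\otimes \N})$, by combining the compatibility of $K$-theory with filtered colimits \cite[Corollary 7.2]{Nonconnective} with an iterated application of Lemma \ref{lem:SwanActionAndUHF}.

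First I would rewrite the $K$-theory of the right-hand side as a filtered colimit
\[K_\ast(\mathcal A \otimes M_n^{\otimes \N} \otimes M_n^{\otimes \N}) = \varinjlim_k K_\ast(\mathcal A \otimes M_n^{\otimes \N} \otimes M_n^{\otimes k}),\]
whose transition maps are induced by $\id \otimes 1$. For each $k$, applying Lemma \ref{lem:SwanActionAndUHF} with trivial group $G$ and the finite $G$-set $X = \{1,\dotsc,n\}^k$ (so that $\End_R(R[X]) \cong M_n^{\otimes k}$ and $[X] = n^k \in A(G) = \Z$) yields a Morita equivalence $\Phi_k \colon \Idem(\mathcal A \otimes M_n^{\otimes \N} \otimes M_n^{\otimes k}) \simeq \Idem(\mathcal A \otimes M_n^{\otimes \N})$ such that $\Phi_k \circ (\id \otimes 1)$ induces multiplication by $n^k$ on $K$-theory. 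Choosing the $\Phi_k$ compatibly in $k$ by iterating the $k=1$ case (using $M_n^{\otimes (k+1)} \cong M_n^{\otimes k} \otimes M_n$), I obtain an identification
\[K_\ast(\mathcal A \otimes M_n^{\otimes \N} \otimes M_n^{\otimes \N}) \cong \varinjlim\bigl( K_\ast(\mathcal A \otimes M_n^{\otimes \N}) \xrightarrow{\cdot n} K_\ast(\mathcal A \otimes M_n^{\otimes \N}) \xrightarrow{\cdot n} \cdots \bigr),\]
under which $K_\ast(\id \otimes 1)$ becomes the canonical inclusion of the zeroth stage.

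It therefore suffices to show that multiplication by $n$ is invertible on $K_\ast(\mathcal A \otimes M_n^{\otimes \N})$. Running the same argument with $\mathcal A$ in place of $\mathcal A \otimes M_n^{\otimes \N}$ yields
\[K_\ast(\mathcal A \otimes M_n^{\otimes \N}) \cong \varinjlim\bigl( K_\ast(\mathcal A) \xrightarrow{\cdot n} K_\ast(\mathcal A) \xrightarrow{\cdot n} \cdots \bigr) \cong K_\ast(\mathcal A) \otimes_\Z \Z[1/n],\]
on which multiplication by $n$ is manifestly invertible.

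The main technical point will be ensuring that the system of Morita equivalences $\Phi_k$ can be chosen so that the induced transitions on $K_\ast(\mathcal A \otimes M_n^{\otimes \N})$ are genuinely multiplication by $n$, rather than multiplication by $n$ twisted by some self-equivalence coming from a mismatch in the Morita identifications. This compatibility should follow from the explicit construction of $\Phi$ in the proof of Lemma \ref{lem:SwanActionAndUHF} together with the decomposition $M_n^{\otimes(k+1)} \cong M_n^{\otimes k} \otimes M_n$, so that the whole argument becomes an algebraic $K$-theory incarnation of the classical fact that the $K$-theory of a UHF algebra inverts the associated integer.
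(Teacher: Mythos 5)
Your argument is correct, but it proves the lemma by a genuinely different mechanism than the paper does. The paper's proof is an algebraic Elliott intertwining: it uses that any two standard unital embeddings $M_n^{\otimes k}\to M_n^{\otimes l}$ are conjugate by permutation matrices, hence induce equivalent functors, and then compares the two colimit systems $\mathcal A\otimes M_n^{\otimes 2^k}$ and $\mathcal A\otimes M_n^{\otimes 2^k}\otimes M_n^{\otimes 2^k}$ via a diagram commuting up to natural equivalence; no Morita/Burnside input and no inverting of $n$ is needed there. Your route instead runs Lemma \ref{lem:SwanActionAndUHF} for the trivial group through the filtered colimit, identifying both sides with colimits of multiplication-by-$n$ maps and concluding from the resulting $\Z[1/n]$-module structure; this is essentially the mechanism the paper deploys only later, in the proof of Lemma \ref{lem: UHF N to ZN}, to identify $K_*((\mathcal A\otimes M_n^{\otimes\N})\rtimes G)\cong K_*(\mathcal A\rtimes G)[1/n]$. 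What each buys: the paper's intertwining is more self-contained and avoids tracking Morita identifications, while your argument yields the explicit computation $K_*(\mathcal A\otimes M_n^{\otimes\N})\cong K_*(\mathcal A)[1/n]$ along the way and makes transparent why the inclusion becomes invertible. Your flagged compatibility worry is real but benign: defining $\Phi_{k+1}$ iteratively as $\Phi_k$ composed with the one-step Morita equivalence makes the transported transitions literally multiplication by $n$, and even if each transition were $n$ times an automorphism, the colimit would still be a $\Z[1/n]$-module (multiplication by $n$ commutes with any homomorphism), so the conclusion is unaffected; one also uses, as the paper does implicitly elsewhere, that non-connective $K$-theory is insensitive to idempotent completion and commutes with the filtered colimits involved.
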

	\begin{proof}
		The proof is an algebraic version of an \emph{Elliott intertwining} which is used in \cite[Examples 1.14(i)]{TW} to prove that UHF-algebras of infinite type are strongly self-absorbing.  
		Let $l\geq k\geq 1$ be integers. 
		For an embedding $\sigma\colon \{1,\dotsc,k\}\to \{1,\dotsc,l\}$, we call the map 
			\[M_n^{\otimes k}\to M_n^{\otimes l},\quad a_1\otimes \dotsb\otimes a_k\mapsto b_1\otimes \dotsb\otimes b_l,\]
			\[b_j=\begin{cases}a_i,&\sigma(i)=j\\ 1,&j\notin \sigma(\{1,\dotsc,k\})		\end{cases}\]
		a \emph{standard embedding}.
		Note that any two embeddings $M_n^{\otimes k}\to M_n^{\otimes l}$ induced by different embeddings $\{1,\dotsc,k\}\to \{1,\dotsc,l\}$ are conjugate via permutation matrices. From this it follows that any two 'standard' embeddings $\mathcal A\otimes M_n^{\otimes k}\to \mathcal A\otimes M_n^{\otimes l}$ are equivalent as functors. Now consider the diagram 
		\[\xymatrix{
			\mathcal A\otimes M_n \ar[r]\ar[d]		&\mathcal A\otimes M_n^{\otimes 2}\ar[r]\ar[d]		&\mathcal A\otimes M_n^{\otimes 4}\ar[r]\ar[d]	&\dotsb \\
			\mathcal A\otimes M_n\otimes M_n\ar[r]\ar[ur]^\cong	&\mathcal A\otimes M_n^{\otimes 2}\otimes M_n^{\otimes 2}\ar[r]\ar[ur]^\cong &\mathcal A\otimes M_n^{\otimes 4}\otimes M_n^{\otimes 4}\ar[r]\ar[ur]^\cong &\dotsb,
		}\]
		where the upper horizontal arrows and the vertical arrows are induced by $\id\otimes 1$, the lower horizontal arrows are induced by $(\id\otimes 1)\otimes (\id\otimes 1)$, and the diagonal arrows are the canonical isomorphisms. 
		This diagram commutes up to equivalence of functors. 
		Since $K$-theory preserves filtered colimits (see \cite[Corollary 7.2]{Nonconnective}) and is invariant under equivalence of functors, the colimit of the vertical functors induces an isomorphism on $K$-theory. This proves the lemma. 
	\end{proof}
	
	\begin{lemma}\label{lem:UHF Z to ZN}
		Let $G$ be a finite group, let $n\geq 1$ be an integer and let $Z$ be an infinite $G$-set. Then for any additive category $\mathcal A$, the canonical inclusion
		\[\id\otimes 1\colon M_n^{\otimes Z}\to M_n^{\otimes Z}\otimes M_n^{\otimes \N}\]
		induces an isomorphism 
		\[K_*((\mathcal A\otimes M_n^{\otimes Z})\rtimes G)\xrightarrow{\cong}K_*((\mathcal A\otimes M_n^{\otimes Z}\otimes M_n^{\otimes \N})\rtimes G).\]
		
	\end{lemma}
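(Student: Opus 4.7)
The plan is to proceed in two steps: first, identify the right-hand side $K_*((\mathcal{B} \otimes M_n^{\otimes \N}) \rtimes G)$, where $\mathcal{B} \coloneqq \mathcal{A} \otimes M_n^{\otimes Z}$, with the localization $K_*(\mathcal{B} \rtimes G)[1/n]$; second, show that $n$ already acts invertibly on $K_*(\mathcal{B} \rtimes G)$, which will force the localization map to be an isomorphism.

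For the first step, I would write $M_n^{\otimes \N} = \varinjlim_k M_n^{\otimes k}$ (with trivial $G$-action). Since $K$-theory preserves filtered colimits \cite[Corollary 7.2]{Nonconnective}, we have
\[K_*((\mathcal{B} \otimes M_n^{\otimes \N}) \rtimes G) = \varinjlim_k K_*((\mathcal{B} \otimes M_n^{\otimes k}) \rtimes G).\]
Applying Lemma \ref{lem:SwanActionAndUHF} with the trivial $G$-set $\{1,\ldots,n\}^k$, whose class in $A(G)$ is simply $n^k$, identifies each term with $K_*(\mathcal{B} \rtimes G)$ in such a way that the transition from level $k$ to $k+1$ becomes multiplication by $n$. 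The colimit is therefore $K_*(\mathcal{B} \rtimes G)[1/n]$, and $(\id \otimes 1)_*$ corresponds to the canonical localization map.

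For the second step, observe that since $G$ is finite and $Z$ is infinite, some orbit type $G/H$ must appear infinitely many times in $Z$; set $Z_0 \coloneqq G/H$. Then there is a $G$-equivariant bijection $Z \sqcup \N \cdot Z_0 \cong Z$ (an orbit type that occurs infinitely many times absorbs countably many extra copies of itself), yielding a $G$-equivariant equivalence of categories $\Psi \colon \mathcal{B} \otimes M_n^{\otimes \N \cdot Z_0} \simeq \mathcal{B}$ and hence an isomorphism $\Psi_*$ on $K$-theory. Repeating the argument of the first step with $\N \cdot Z_0$ in place of $\N$ gives
\[K_*((\mathcal{B} \otimes M_n^{\otimes \N \cdot Z_0}) \rtimes G) \cong K_*(\mathcal{B} \rtimes G)\bigl[1/[n^{Z_0}]\bigr].\]
Proposition \ref{prop:kkG}, via the relations $[n^{Z_0}] \beta = n^r$ and $[n^{Z_0}]^l = n\alpha$ in $A(G)$, shows that inverting $[n^{Z_0}]$ and inverting $n$ produce the same localization of any $A(G)$-module; hence the right-hand side equals $K_*(\mathcal{B} \rtimes G)[1/n]$. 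Composing with $\Psi_*$, we obtain an isomorphism $K_*(\mathcal{B} \rtimes G) \cong K_*(\mathcal{B} \rtimes G)[1/n]$, which forces $n$ to act invertibly on $K_*(\mathcal{B} \rtimes G)$, finishing the proof.

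The main obstacle is the bookkeeping in the colimit identifications: the equivalences produced by Lemma \ref{lem:SwanActionAndUHF} depend on choices, so verifying that they are compatible with the inclusions $\id \otimes 1$ (so that the colimit transitions are genuinely multiplication by $n$, respectively $[n^{Z_0}]$) requires care. Conceptually, however, the argument rests on the simple observation that an infinite Bernoulli shift self-absorbs any finite equivariant tensor power of $M_n$ whose orbit types already occur infinitely many times, which combined with Proposition \ref{prop:kkG} forces $n$-invertibility of the equivariant $K$-theory.
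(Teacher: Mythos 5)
Your argument is correct, but it takes a genuinely different route from the paper's proof of this lemma. The paper reduces (after replacing $\mathcal A$ by $\mathcal A\otimes M_n^{\otimes Z'}$) to $Z=\bigsqcup_{\N}G/H$, uses only part (1) of Proposition \ref{prop:kkG} to produce an honest equivariant algebra isomorphism $(M_n^{\otimes G/H})^{\otimes l}\cong M_n\otimes \End_R(R[X])$, hence $M_n^{\otimes Z}\cong M_n^{\otimes \N}\otimes \End_R(R[X])^{\otimes \N}$, pulls the trivially-acted factor $M_n^{\otimes \N}$ out of the crossed product, and concludes with the non-equivariant Elliott-intertwining Lemma \ref{lem:ssa}; the Burnside/Swan module structure is not used at all. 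You instead argue entirely on the level of the $A(G)$-action via Lemma \ref{lem:SwanActionAndUHF}, using both parts of Proposition \ref{prop:kkG}, the equivariant absorption $Z\sqcup \N\cdot G/H\cong Z$, and a comparison of the localizations at $[n^{Z_0}]$ and at $n$; this is closer in spirit to the paper's proof of Lemma \ref{lem: UHF N to ZN} than to its proof of the present lemma, and it bypasses Lemma \ref{lem:ssa} completely. Your closing observation is a nice touch: an abstract isomorphism $M\cong M[1/n]$ already forces $n$ to act invertibly on $M$, because multiplication by the integer $n$ commutes with every homomorphism, so you never need to know that your composite is the localization map. The one place where the care you flag is genuinely needed is Step 2: identifying the transition maps of the colimit over $k\cdot Z_0$ with multiplication by $[n^{Z_0}]$ on $K_*(\mathcal B\rtimes G)$ requires checking that the equivalences of Lemma \ref{lem:SwanActionAndUHF} are compatible with the Swan action (in Step 1 this is automatic, since multiplication by the integer $n$ commutes with any choice of identification, but $[n^{Z_0}]$ is not central in this trivial sense). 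This compatibility does hold and is of exactly the same nature as what the paper itself invokes without proof in Lemma \ref{lem: UHF N to ZN} (``by applying Lemma \ref{lem:SwanActionAndUHF} repeatedly''), so it is not a gap, but it is the step you would have to write out; the paper's route for this particular lemma avoids the issue by trading it for the equivariant isomorphism of algebras plus a purely non-equivariant self-absorption argument.
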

	\begin{proof}
		Since $G$ is finite and $Z$ is infinite, it contains infinitely many orbits of the same type. Without loss of generality we may thus assume that $Z$ is of the form $Z=\sqcup_\N G/H$ for some subgroup $H\subseteq G$. It follows from the first part of Proposition \ref{prop:kkG} that there is a $G$-set $X$, an integer $l\geq 1$ and an equivariant isomorphism 
		\[(M_n^{\otimes G/H})^{\otimes l}\cong M_n\otimes \End_R(R[X]).\]
		In particular, we have an equivariant isomorphism 
		\[M_n^{\otimes Z}\cong M_n^{\otimes \N}\otimes \End_R(R[X])^{\otimes \N}.\]
		Observe moreover that there is a canonical isomorphism 
		\[(M_n^{\otimes \N}\otimes \End_R(R[X])^{\otimes \N})\rtimes G \cong M_n^{\otimes \N}\otimes (\End_R(R[X])^{\otimes \N}\rtimes G).\]
		of $R$-linear categories. 
		Now the lemma follows from Lemma \ref{lem:ssa}. 
	\end{proof}
	
	\begin{lemma}\label{lem: UHF N to ZN}
		Let $G$ be a finite group, let $n\geq 1$ be an integer and let $Z$ be a $G$-set. Then for any additive category $\mathcal A$, the canonical inclusion
		\[M_n^{\otimes \N}\to M_n^{\otimes \N}\otimes M_n^{\otimes Z}\]
		induces an isomorphism 
		\begin{equation}\label{eq:N to N Z}
		K_*((\mathcal A\otimes M_n^{\otimes \N})\rtimes G)\xrightarrow{\sim}K_*((\mathcal A\otimes M_n^{\otimes \N}\otimes M_n^{\otimes Z})\rtimes G).
		\end{equation}
	\end{lemma}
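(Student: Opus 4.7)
My approach is to first reduce to finite $Z$ by a filtered colimit argument, and then translate the finite case into the invertibility of an element of the Burnside ring $A(G)$ acting on $K$-theory. For arbitrary $Z$, the plan is to write $M_n^{\otimes Z}$ as the filtered colimit $\colim_F M_n^{\otimes F}$ over the cofinal system of finite $G$-invariant subsets $F\subseteq Z$ (these are cofinal since $G$ is finite). Because tensor products and $\rtimes G$ commute with filtered colimits of $R$-linear categories and nonconnective $K$-theory does so by \cite[Corollary~7.2]{Nonconnective}, the right-hand side of \eqref{eq:N to N Z} identifies with $\colim_F K_*((\mathcal A\otimes M_n^{\otimes\N}\otimes M_n^{\otimes F})\rtimes G)$. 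It will then suffice to show that each canonical map from the left-hand side into this system is an isomorphism.

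For finite $F$, I would identify $M_n^{\otimes F}$ as a $G$-$R$-algebra with $\End_R(R[X])$, where $X=\{1,\dotsc,n\}^F$ carries the natural $G$-action inherited from $F$ (the shift of tensor factors becomes the permutation of basis elements). Applying Lemma~\ref{lem:SwanActionAndUHF} with $\mathcal A\otimes M_n^{\otimes\N}$ in place of $\mathcal A$ produces an equivalence
\[\Phi\colon\Idem((\mathcal A\otimes M_n^{\otimes\N}\otimes M_n^{\otimes F})\rtimes G)\xrightarrow{\simeq}\Idem((\mathcal A\otimes M_n^{\otimes\N})\rtimes G)\]
with the property that the composition of the $K$-theory map induced by $\id\otimes 1$ with the isomorphism induced by $\Phi$ equals multiplication by $[n^F]\in A(G)$ on $K_*((\mathcal A\otimes M_n^{\otimes\N})\rtimes G)$ via Lemma~\ref{lem:Swan}. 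The task therefore reduces to proving that multiplication by $[n^F]$ is invertible on this group.

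To achieve this, I would first show that multiplication by the integer $n$ is invertible on $K_*((\mathcal A\otimes M_n^{\otimes\N})\rtimes G)$. Since $M_n^{\otimes\N}$ carries the trivial $G$-action, it factors out of the crossed product, so Lemma~\ref{lem:ssa} (applied to $\mathcal A\rtimes G$) shows that $(\mathcal A\otimes M_n^{\otimes\N})\rtimes G\to(\mathcal A\otimes M_n^{\otimes\N}\otimes M_n^{\otimes\N})\rtimes G$ is a $K$-theory isomorphism. Writing the new $M_n^{\otimes\N}$ as $\colim_k M_n^{\otimes k}$ and applying Lemma~\ref{lem:SwanActionAndUHF} to the trivial $G$-set $\{1,\dotsc,n\}$ (representing $n\in A(G)$) identifies this map with the localisation inverting $n$, so multiplication by $n$ was already invertible. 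Proposition~\ref{prop:kkG}(2) then supplies $\beta\in A(G)$ and $r\geq 1$ with $[n^F]\cdot\beta=n^r$ in $A(G)$, and $\beta\cdot n^{-r}$ is the desired inverse for the $[n^F]$-action, completing the finite case and hence the proof.

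The main technical point I expect will require care is verifying that the shift $G$-action on $M_n^{\otimes F}$ indeed matches the $G$-action on $\End_R(R[\{1,\dotsc,n\}^F])$ induced from the permutation action on $\{1,\dotsc,n\}^F$; this is the identification that lets the Burnside-ring machinery of Lemma~\ref{lem:SwanActionAndUHF} and Proposition~\ref{prop:kkG} take over. Once this is in place, the remaining bookkeeping with filtered colimits, tensor products, $\rtimes G$, and $K$-theory should be straightforward.
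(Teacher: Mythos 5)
Your proof is correct and follows essentially the same route as the paper: reduce to finite $Z$ by a filtered colimit argument, identify the induced map with the action of $[n^Z]\in A(G)$ on $K_*((\mathcal A\otimes M_n^{\otimes \N})\rtimes G)$ via Lemma \ref{lem:SwanActionAndUHF}, and invert it using Proposition \ref{prop:kkG}(2) once multiplication by $n$ is known to be invertible on that group. The only (harmless) deviation is in how $n$-invertibility is established: the paper applies Lemma \ref{lem:SwanActionAndUHF} repeatedly to identify $K_*((\mathcal A\otimes M_n^{\otimes \N})\rtimes G)$ with $K_*(\mathcal A\rtimes G)[1/n]$ outright, whereas you deduce it from Lemma \ref{lem:ssa} applied to $\mathcal A\rtimes G$ combined with the same Swan-action/colimit bookkeeping.
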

	\begin{proof}
		Since $K$-theory preserves filtered colimits (see \cite[Corollary 7.2]{Nonconnective}), we may assume that $Z$ is finite (the general case follows by taking the colimit over all finite subsets of $Z$).
		By Lemma \ref{lem:SwanActionAndUHF}, we can identify the map \eqref{eq:N to N Z} with the map 
		\[[n^Z]\cdot\colon K_*((\mathcal A\otimes M_n^{\otimes \N})\rtimes G)\to K_*((\mathcal A\otimes M_n^{\otimes \N})\rtimes G),\]
		where $[n^Z]\in A(G)$ acts as in Lemma \ref{lem:Swan}.
		By applying Lemma \ref{lem:SwanActionAndUHF} repeatedly to the trivial $G$-set $\{1,\dotsc,n\}$, this map can be identified with the map 
		\[[n^Z]\cdot \colon K_*(\mathcal A\rtimes G)[1/n]\to K_*(\mathcal A\rtimes G)[1/n].\]
		But this map is invertible by the second part of Proposition \ref{prop:kkG}. 
	\end{proof}

	\begin{theorem}\label{thm:UHF}
		Let $G$ be a group satisfying the Farrell--Jones conjecture with coefficients. Assume that the orders of all finite subgroups of $G$ are invertible in $R$. 
		Let $\mathcal A$ be a regular $G$-$R$-linear category, let $Z$ be an infinite $G$-set and let $n\geq 1$ be an integer. Then the canonical maps 
			\[K_*((\mathcal A\otimes M_n^{\otimes Z})\rtimes G)\to K_*((\mathcal A\otimes M_n^{\otimes Z}\otimes M_n^{\otimes \N})\rtimes G)\leftarrow K_*((\mathcal A\otimes M_n^{\otimes \N})\rtimes G)\]
		are isomorphisms. In particular, we have 
		\[K_*((\mathcal A\otimes M_n^{\otimes Z})\rtimes G)\cong K_*(\mathcal A\rtimes G)[1/n].\]
	\end{theorem}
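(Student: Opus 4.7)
The strategy is to apply the Going-Down principle (Theorem \ref{thmFJimpliesGoingDown}) to each of the two canonical maps, reducing the problem to showing that for every finite-by-cyclic subgroup $V \subseteq G$, both induced maps
\[K_*((\mathcal A \otimes M_n^{\otimes Z}) \rtimes V) \to K_*((\mathcal A \otimes M_n^{\otimes Z} \otimes M_n^{\otimes \N}) \rtimes V) \leftarrow K_*((\mathcal A \otimes M_n^{\otimes \N}) \rtimes V)\]
are isomorphisms. When $V$ is finite this is precisely the content of Lemma \ref{lem:UHF Z to ZN} (for the first arrow) and Lemma \ref{lem: UHF N to ZN} (for the second, using symmetry of the tensor product), so only the case $V = H \rtimes_\varphi \Z$ with $H$ finite remains.

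For such $V$, I would adapt the template of Lemma \ref{lem:virtually cyclic}. For each of the three categories $\mathcal C \in \{\mathcal A \otimes M_n^{\otimes Z},\ \mathcal A \otimes M_n^{\otimes Z} \otimes M_n^{\otimes \N},\ \mathcal A \otimes M_n^{\otimes \N}\}$, the semidirect product identification $\mathcal C \rtimes V \simeq (\mathcal C \rtimes H) \rtimes_\Phi \Z$ together with \cite[Theorem 7.8]{bartels2022vanishing} reduces the $V$-crossed product equivalence to the corresponding $H$-crossed product equivalence, which is the finite case. To invoke \cite[Theorem 7.8]{bartels2022vanishing}, I would need $(\mathcal C \rtimes H)[\Z^m]$ to be regular coherent for every $m \geq 0$. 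Writing $\mathcal A \otimes M_n^{\otimes Z}$ as the filtered colimit over finite $H$-invariant $Y \subseteq Z$ of $\mathcal A \otimes M_n^{\otimes Y}$, one observes that $\Idem(\mathcal A \otimes M_n^{\otimes Y})$ is Morita equivalent to $\mathcal A$ (since $M_n$ is Morita equivalent to $R$), so $(\mathcal A \otimes M_n^{\otimes Y}) \rtimes H$ is regular by Lemma \ref{lem:regular} and \cite[Lemma 6.4]{bartels2022vanishing}. The transition functors are flat, being given by tensoring with fixed finitely generated projective modules, so \cite[Lemma 11.2]{bartels2022vanishing} combined with \cite[Theorem 10.1]{bartels2022vanishing} delivers the required regular coherence of the polynomial extensions. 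The same argument applies to the other two categories.

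For the \emph{in particular} identification, once the first part is established, I would apply the displayed isomorphism to replace $Z$ by $\N$, obtaining $K_*((\mathcal A \otimes M_n^{\otimes Z}) \rtimes G) \cong K_*((\mathcal A \otimes M_n^{\otimes \N}) \rtimes G)$. Writing $M_n^{\otimes \N} = \varinjlim_k M_n^{\otimes k}$ with trivial $G$-action and applying \cite[Corollary 7.2]{Nonconnective}, this becomes $\varinjlim_k K_*((\mathcal A \otimes M_n^{\otimes k}) \rtimes G)$. Iterated application of Lemma \ref{lem:SwanActionAndUHF} to the trivial $G$-set $\{1, \dotsc, n\}$ identifies each transition map with multiplication by $n \in A(G)$ acting on $K_*(\mathcal A \rtimes G)$, and the colimit is therefore $K_*(\mathcal A \rtimes G)[1/n]$.

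The main obstacle I anticipate is the regularity verification in the finite-by-cyclic step: although the Morita equivalence $\Idem(\mathcal A \otimes M_n^{\otimes Y}) \simeq \mathcal A$ is morally clear, confirming that this equivalence respects the finite-projective-resolution condition of \cite[Definition 6.2]{bartels2022vanishing} and tracking the additive and idempotent completions via Lemma \ref{lem:idempotentcomplete} is the most delicate bookkeeping point, particularly because $\mathcal A \otimes M_n^{\otimes Y}$ as defined is not itself additive.
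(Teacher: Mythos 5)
Your proposal is correct and follows essentially the same route as the paper's proof: reduction to finite-by-cyclic subgroups via Theorem \ref{thmFJimpliesGoingDown}, the finite case via Lemmas \ref{lem:UHF Z to ZN} and \ref{lem: UHF N to ZN}, and the $H\rtimes_\varphi\Z$ case via the semidirect-product decomposition together with the colimit/flatness regularity argument feeding into \cite[Theorem 7.8]{bartels2022vanishing}, exactly as in the paper (which likewise glosses the Morita/idempotent-completion bookkeeping via \cite[Lemma 6.4]{bartels2022vanishing}). Your explicit derivation of the \emph{in particular} statement, iterating Lemma \ref{lem:SwanActionAndUHF} on the trivial $G$-set $\{1,\dotsc,n\}$ over the filtered colimit, is precisely the mechanism the paper uses inside Lemma \ref{lem: UHF N to ZN} and is valid for arbitrary $G$.
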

	\begin{proof}
		By Theorem \ref{thmFJimpliesGoingDown} we may assume that $G$ is finite-by cyclic. If $G$ itself is finite, then the theorem follows from Lemmas \ref{lem:UHF Z to ZN} and \ref{lem: UHF N to ZN}. If $G$ is not finite, then $G$ is a semidirect product $H\rtimes_\varphi\Z$ for some finite group $H$. 
		As in the proof of Lemma \ref{lem:virtually cyclic}, we get canonical isomorphisms
		 	\begin{equation}\label{eq:semidirectproduct2}
		 	\begin{aligned}
		 	(\mathcal A\otimes M_n^{\otimes Z})\rtimes G&\cong ((\mathcal A\otimes M_n^{\otimes Z})\rtimes H)\rtimes_\Phi \Z\\
		 	(\mathcal A\otimes M_n^{\otimes Z}\otimes M_n^{\otimes \N})\rtimes G&\cong ((\mathcal A\otimes M_n^{\otimes Z}\otimes M_n^{\otimes \N})\rtimes H)\rtimes_\Phi \Z\\
			(\mathcal A\otimes M_n^{\otimes \N})\rtimes G&\cong ((\mathcal A\otimes M_n^{\otimes \N})\rtimes H)\rtimes_\Phi \Z
		 	\end{aligned}
		 	\end{equation}
		 Note that $(\mathcal A\otimes M_n^{\otimes Z})\rtimes H$ is the increasing union of the subcategories $(\mathcal A\otimes M_n^{\otimes Y})\rtimes H$ where $Y$ ranges over all finite $H$-invariant subsets of $Z$. Each of these subcategories is regular by Lemma \ref{lem:regular}\footnote{Here we also use that regularity passes to matrix amplifications, which can be deduced for instance from \cite[Lemma 6.4]{bartels2022vanishing}.} and the inclusions $(\mathcal A\otimes M_n^{\otimes Y})\rtimes H\to (\mathcal A\otimes M_n^{\otimes Y'})\rtimes H$ for finite $H$-invariant subsets $Y\subseteq Y'\subseteq Z$ are easily seen to be flat.
		 Thus their union is regular coherent by \cite[Lemma 11.2]{bartels2022vanishing}. Combined with \cite[Theorem 10.1]{bartels2022vanishing}, this even shows that $((\mathcal A\otimes M_n^{\otimes Z})\rtimes H)[\Z^m]$ regular coherent for any $m\geq 0$. The same argument shows that $((\mathcal A\otimes M_n^{\otimes Z}\otimes M_n^{\otimes \N})\rtimes H)[\Z^m]$ and $((\mathcal A\otimes M_n^{\otimes \N})\rtimes H)[\Z^m]$ are regular coherent for any $m\geq 1$. 
		 In view of the decompositions \eqref{eq:semidirectproduct2}, \cite[Theorem 7.8]{bartels2022vanishing} now reduces the problem to checking that the maps 
		 	\[K_*((\mathcal A\otimes M_n^{\otimes Z})\rtimes H)\to K_*((\mathcal A\otimes M_n^{\otimes Z}\otimes M_n^{\otimes \N})\rtimes H)\leftarrow K_*((\mathcal A\otimes M_n^{\otimes \N})\rtimes H)\]
		 are isomorphisms. But this case was already covered above. 
	\end{proof}
	As an application, we obtain $K$-theory formulas for Bernoulli-shifts on many semi-simple $R$-algebras:
	\begin{theorem}\label{thm-semisimple}
		Let $G$ be a group satisfying the Farrell--Jones conjecture with coefficients. Assume that $R$ is regular and that the orders of all finite subgroups of $G$ are invertible in $R$. Let $Z$ be an infinite $G$-set and let $A$ be an $R$-algebra of the form $A=M_{n_0}(R)\oplus \dotsb\oplus M_{n_k}(R)$. Write $n=\mathrm{gcd}(n_0,\dotsc,n_k)$. Then we have an isomorphism 
			\[K_*(A^{\otimes Z}\rtimes G)\cong \bigoplus_{[F]\in G\backslash \FIN(Z)}\bigoplus_{[S]\in G_F\backslash \{1,\dotsc,k\}^F}K_*(R[G_S])[1/n].\]
	\end{theorem}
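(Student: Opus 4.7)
Proof proposal:

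The plan is to factor out the common matrix factor of $A$ and apply Theorem~\ref{thm:UHF} to extract the $[1/n]$-localisation, then to compute the remaining $K$-theory by a going-down reduction and a stage-wise orbit decomposition. Write $m_i := n_i/n$ and $C := M_{m_0}(R) \oplus \dotsb \oplus M_{m_k}(R)$, so that the identity $M_n(R) \otimes M_{m_i}(R) \cong M_{n_i}(R)$ gives a $G$-equivariant isomorphism $A^{\otimes Z} \cong M_n^{\otimes Z} \otimes C^{\otimes Z}$. As in the proof of Lemma~\ref{lem:virtually cyclic}, the $G$-$R$-linear category $\mathcal F^p(C^{\otimes Z})$ is regular: it is the filtered colimit of the regular categories $\mathcal F^p(C^{\otimes F})$ (each $C^{\otimes F}$ is a finite direct sum of matrix algebras over the regular ring $R$, hence regular by Lemma~\ref{lem:regular}) along flat inclusions, and \cite[Lemma 11.2, Theorem 10.1]{bartels2022vanishing} then give regular coherence stable under tensoring with $\mathbb Z^m$ for all $m$. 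Theorem~\ref{thm:UHF} applied with $\mathcal A = \mathcal F^p(C^{\otimes Z})$ yields
\[K_*(A^{\otimes Z}\rtimes G) \cong K_*(C^{\otimes Z}\rtimes G)[1/n].\]

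It then remains to show $K_*(C^{\otimes Z}\rtimes G) \cong \bigoplus_{[F]}\bigoplus_{[S]} K_*(R[G_S])$, with $[F]$ over $G\backslash\FIN(Z)$ and $[S]$ over $G_F\backslash\{1,\dotsc,k\}^F$. By Theorem~\ref{thmFJimpliesGoingDown}, this reduces to finite-by-cyclic subgroups $V\subseteq G$. The case $V = H\rtimes_\varphi \mathbb Z$ then reduces as in the proof of Lemma~\ref{lem:virtually cyclic}, via \eqref{eq:semidirectproductdecom} and \cite[Theorem~7.8]{bartels2022vanishing}, to the case $V = H$ finite, again using regular coherence after tensoring with $\mathbb Z^m$.

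For finite $V$, I would present $C^{\otimes Z}$ as the filtered colimit of $C^{\otimes F}$ over $V$-invariant finite $F\subseteq Z$ (cofinal because $|V| < \infty$). At each stage, the ring decomposition $C^{\otimes F} = \bigoplus_{S\colon F\to\{0,\dotsc,k\}}\bigotimes_{z\in F} M_{m_{S(z)}}(R)$ with $V$ permuting summands via its action on $\{0,\dotsc,k\}^F$, together with Lemma~\ref{lem:Green} and Lemma~\ref{lem:SwanActionAndUHF} applied to the $V_S$-set $\prod_{z\in F}\{1,\dotsc,m_{S(z)}\}$, gives
\[K_*(C^{\otimes F}\rtimes V) \cong \bigoplus_{[S]\in V\backslash\{0,\dotsc,k\}^F} K_*(R[V_S]).\]

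The main obstacle is re-expressing the colimit of these groups, which is naturally indexed over $\{0,\dotsc,k\}^F$, in the form of the theorem, which is indexed over pairs $(T, S)$ with $T\in\FIN(Z)$ and $S\colon T\to\{1,\dotsc,k\}$. The connecting map sends the $[S]$-summand at level $F$ into $[S']$-summands at $F'\supseteq F$ for every extension $S' = S\cup S''$ with $S''\colon F'\setminus F \to \{0,\dotsc,k\}$, and grouping extensions by the ``support'' $T := \{z\colon S'(z)\neq 0\}$ should provide the desired reindexing. I would implement this by constructing at each level a Walsh/M\"obius-type basis $y_{[T,S]}$: for $T\subseteq F$ and $S\colon T\to\{1,\dotsc,k\}$, an alternating sum over extensions $\tilde S\colon F\to\{0,\dotsc,k\}$ with $\tilde S|_T = S$ of the standard generators of the $[\tilde S]$-summand; one then verifies that the $y_{[T,S]}$ are preserved by the connecting maps and freely generate the colimit with the claimed summand structure $K_*(R[V_S])$. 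This is the direct analogue of the Walsh basis identifying $C(\{0,\dotsc,k\}^Z,\mathbb Z)\cong \bigoplus_{T\in \FIN(Z)}\mathbb Z[\{1,\dotsc,k\}^T]$, carried out equivariantly using the $A(V)$-action of Lemma~\ref{lem:Swan} and the Burnside-ring identities of Proposition~\ref{prop:kkG}. Combined with the $[1/n]$-localisation from the first step, this yields the theorem.
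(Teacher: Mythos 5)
Your first step is essentially the paper's: the paper also strips off the common matrix factor via Theorem \ref{thm:UHF} (it first replaces $A$ by $M_n^{\oplus k+1}\cong M_n\otimes R^{\oplus k+1}$ through a virtual equivalence built from a matrix $X\in\SL(k+1,\Z)$ with $X(n,\dotsc,n)^T=(n_0,\dotsc,n_k)^T$ supplied by \cite[Corollary 3.4]{CEKN}, while you factor $A\cong M_n\otimes C$ directly), and your appeal to Theorem \ref{thm:UHF} with the non-Noetherian coefficient category $\mathcal F^p(C^{\otimes Z})$ is no looser than the paper's own application of it to $(R^{\oplus k+1})^{\otimes Z}$, since the proof only uses regular coherence of the relevant crossed products (note, though, that you prove regular coherence, not regularity, so strictly you need this mild extension of the statement). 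The reduction to finite subgroups via Theorem \ref{thmFJimpliesGoingDown} and the argument of Lemma \ref{lem:virtually cyclic}, as well as the stage-wise computation $K_*(C^{\otimes F}\rtimes V)\cong\bigoplus_{[S]\in V\backslash\{0,\dotsc,k\}^F}K_*(R[V_S])$ via Lemmas \ref{lem:Green} and \ref{lem:SwanActionAndUHF}, are fine.

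The genuine gap is the step you defer to ``one then verifies'': the identification of the colimit is the actual content of the theorem, and the construction you propose does not work as stated. First, integrality: the connecting maps are twisted by the ranks $m_i$ of the units, so at the level of a single tensor factor the relevant lattice is $\Z^{k+1}$ with distinguished vector $u=(m_0,\dotsc,m_k)$, and your alternating sums over extensions amount to taking the complement spanned by the standard generators $e_1,\dotsc,e_k$; this is a complement of $\Z u$ only when $m_0=1$. For instance $(m_0,m_1)=(2,3)$ has $\gcd=1$, so your plan must produce the splitting with no localization, yet $\Z(2,3)+\Z(0,1)$ has index $2$ in $\Z^2$. Repairing this requires a unimodular change of basis whose existence is exactly \cite[Corollary 3.4]{CEKN} --- the ingredient the paper channels through its virtual equivalence, and the only place the hypothesis on $n=\gcd(n_i)$ enters. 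Second, even with the correct vectors, one cannot identify the colimit by exhibiting a ``basis'' of $K$-theory classes: the groups $K_*(R[V_S])$ are in general neither free nor finitely generated, the summands are attached to varying stabilizers $V_S$, and the components of the connecting maps are compositions of induction maps along $V_{S'}\subseteq V_S$ with multiplication by Swan classes of the sets $\prod_{z\in F'\setminus F}\{1,\dotsc,m_{S''(z)}\}$; splitting such a colimit requires producing the decomposition functorially, at the level of categories, not of $K$-groups. This is precisely how the paper avoids any colimit of $K$-groups: after the $\SL(k+1,\Z)$ virtual equivalence and Theorem \ref{thm:UHF} only $R^{\oplus k+1}$ remains, a second (unitriangular, hence unimodular) virtual equivalence changes the base point from $R\oplus\dotsb\oplus R$ to a single coordinate, the binomial formula then becomes an honest equivariant equivalence onto $\bigoplus_{F\in\FIN(Z)}(R^{\oplus k})^{\otimes F}$, and Lemma \ref{lem:Green} yields the direct sum outright. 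If you carry out (a) the splitting by functors rather than $K$-theory classes and (b) the unimodular-matrix input, you will essentially have reconstructed the paper's argument.
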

	\begin{proof}
		Note hat every $(k+1)\times (k+1)$-matrix $X$ with non-negative integer entries defines a functor 
		$F_X\colon \mathcal F^f(R)^{\oplus k+1}\to \mathcal F^f(R)^{\oplus k+1}$ by sending the generator $R\in \mathcal F^f(R)$ of the $j$-th direct summand to the module $R^{\oplus X_{ij}}\in \mathcal F^f(R)$ in the $i$-th direct summand. 
		Now identify $\mathcal F^p(R)\simeq \mathcal F^p(M_m)$ using the $M_m$-$R$-bimodule $R^{\oplus m}$ for $m=n,n_0,\dotsc,n_k$. 
		Using this identification, the functor 
			\begin{equation}\label{eq:IdemFX}
			\Idem(F_X)\colon \mathcal F^p(M_n)^{\oplus k+1}\to \mathcal F^p(M_{n_0})\oplus \dotsb \oplus\mathcal F^p(M_{n_k})
			\end{equation}
		is based with respect to the rank one modules $M_n\oplus \dotsb\oplus M_n$ and $M_{n_0}\oplus \dotsb\oplus M_{n_k}$ if and only if the representing matrix $X$ satisfies 
			\[X\begin{pmatrix}
			n\\
			\vdots\\
			n
			\end{pmatrix}=\begin{pmatrix}n_0\\
			\vdots\\
			n_k
			\end{pmatrix}.\]
		By \cite[Corollary 3.4]{CEKN} we can find such a matrix satisfying in addition $X\in \SL(k+1,\Z)$. 
		By writing the inverse of $X$ as a difference of two matrices with non-negative integer entries, we conclude that the functor \eqref{eq:IdemFX} is a based virtual equivalence. 
		Combining Lemmas \ref{lem:virtualEqFiniteGroup} and \ref{lem:virtually cyclic} with Theorem \ref{thmFJimpliesGoingDown}, we conclude that the induced map 
			\begin{equation}\label{semisimple1}
				K_*((M_n^{\oplus k+1})^{\otimes Z}\rtimes G)\to K_*((M_{n_0}\oplus \dotsb\oplus M_{n_k})^{\otimes Z}\rtimes G)
			\end{equation}
		is an isomorphism. Using the isomorphism $M_n^{\oplus k+1}\cong M_n\otimes R^{\oplus k+1}$, Theorem \ref{thm:UHF} moreover provides us with an isomorphism 
			\begin{equation}\label{semisimple2}
				K_*((M_n^{\oplus k+1})^{\otimes Z}\rtimes G)\cong K_*((R^{\oplus k+1})^{\otimes Z}\rtimes G)[1/n].
			\end{equation}
		Now note that the virtual equivalence 
			\[F_X\colon \mathcal F^f(R)\oplus \dotsb\oplus \mathcal F^f(R)\to \mathcal F^f(R)\oplus \dotsb\oplus \mathcal F^f(R)\]
		induced by the invertible matrix 
			\[X=\begin{pmatrix}
			1 & &1\\
			&\ddots&\vdots\\
			&&1
			\end{pmatrix}\in \SL(k+1,\Z)\]
		is based with respect to the base points $0\oplus \dotsb \oplus 0\oplus R$ and $R\oplus \dotsb\oplus R$.
		Comparing the infinite tensor products with respect to these different base points and applying the combination of Theorem \ref{lem:virtualEqFiniteGroup} and Lemma \ref{lem:virtually cyclic} with Theorem \ref{thmFJimpliesGoingDown}, we obtain an isomorphism 
			\begin{equation}
				K_*((R^{\oplus k+1})^{\otimes Z}\rtimes G)\cong K_*\left(\left(\bigoplus_{F\in \FIN(Z)}(R^{\oplus k})^{\otimes F}\right)\rtimes G\right)
			\end{equation}
		In combination with Lemma \ref{lem:Green}, we get an isomorphism 
			\begin{align*}	
				K_*((R^{\oplus k+1})^{\otimes Z}\rtimes G)&\cong \bigoplus_{[F]\in G\backslash \FIN(Z)}K_*((R^{\oplus k})^{\otimes F}\rtimes G_F)\\
				&\cong \bigoplus_{[F]\in G\backslash \FIN(Z)}K_*(R^{\oplus \{1,\dotsc,k\}^F}\rtimes G_F)\\
				&\cong \bigoplus_{[F]\in G\backslash \FIN(Z)}\bigoplus_{[S]\in G_F\backslash \{1,\dotsc,k\}^F}K_*(R[G_S]),
			\end{align*}
		Now the theorem follows from combining this isomorphism with \eqref{semisimple1} and \eqref{semisimple2}. 
	\end{proof}
	\bibliography{Refs}
	\bibliographystyle{alpha}

\end{document}